\definecolor{DarkGreen}{rgb}{0.09, 0.45, 0.27}
\theoremstyle{plain}
\newtheorem{theorem}[equation]{Theorem}
\newtheorem{lemma}[equation]{Lemma}
\newtheorem{corollary}[equation]{Corollary}
\newtheorem{proposition}[equation]{Proposition}
\theoremstyle{definition}
\newtheorem{definition}[equation]{Definition}
\theoremstyle{remark}
\newtheorem{remark}[equation]{Remark}
\newtheorem{remarks}[equation]{Remarks}
\newtheorem{claim}[equation]{Claim}
\numberwithin{equation}{section}
\DeclareMathOperator{\curl}{curl}
\newcommand{\HH}{\mathcal{H}}
\newcommand{\RR}{{\mathbb{R}}}
\newcommand{\pO}{\partial\Omega}
\newcommand{\dist}{\operatorname{dist}}
\newcommand{\re}{\mathbb{R}}
\newcommand{\rn}{\mathbb{R}^n}
\newcommand{\co}{\mathbb{C}}
\newcommand{\ree}{\mathbb{R}^{n+1}}
\newcommand{\C}{\mathcal{C}}
\newcommand{\om}{\Omega}
\def\XXint#1#2#3{{\setbox0=\hbox{$#1{#2#3}{\int}$ }
\vcenter{\hbox{$#2#3$ }}\kern-.6\wd0}}
\newcommand{\A}{\mathcal{A}}
\newcommand{\E}{\mathcal{E}}
\newcommand{\s}{\mathcal{S}}
\newcommand{\oo}{\mathcal{O}}
\newcommand{\ntt}{\widetilde{N}_*}
\newcommand{\pom}{\partial\Omega}
\newcommand{\loc}{\operatorname{loc}}
\newcommand{\hm}{\omega}
\newcommand{\RNum}[1]{\uppercase\expandafter{\romannumeral #1\relax}}
\def\Yint#1{\mathchoice
    {\YYint\displaystyle\textstyle{#1}}%
    {\YYint\textstyle\scriptstyle{#1}}%
    {\YYint\scriptstyle\scriptscriptstyle{#1}}%
    {\YYint\scriptscriptstyle\scriptscriptstyle{#1}}%
      \!\iint}
\def\YYint#1#2#3{{\setbox0=\hbox{$#1{#2#3}{\iint}$}
    \vcenter{\hbox{$#2#3$}}\kern-.51\wd0}}
\def\longdash{{-}\mkern-3.5mu{--}} 
\def\tiltlongdash{\rotatebox[origin=c]{15}{$\longdash$}}
\def\fiint{\Yint\tiltlongdash}
\renewcommand{\emptyset}{\mbox{\textup{\O}}}
\DeclareMathOperator{\supp}{supp}
\DeclareMathOperator{\Gr}{Graph}
\DeclareMathOperator{\diam}{diam}
\DeclareMathOperator{\divg}{div}
\DeclareMathOperator{\Ir}{I}
\DeclareMathOperator{\IIr}{II}
\def\div{\mathop{\operatorname{div}}\nolimits}
\begin{document}
\allowdisplaybreaks
\author{S. Bortz}
\address{Department of Mathematics, University of Alabama, Tuscaloosa, AL, 35487, USA}
	\email{sbortz@ua.edu}

\author{T. Toro}
\address{Department of Mathematics, University of Washington, Seattle, WA
98195, USA}
\email{toro@uw.edu}

\author{Z. Zhao}
\address{Department of Mathematics, University of Chicago, Chicago, IL 60637, USA}
\email{zhaozh@uchicago.edu}

\title[H\"older Coefficients Implies $\log k \in VMO$]{Optimal Poisson Kernel Regularity for Elliptic Operators with H\"older Continuous Coefficients in Vanishing Chord-Arc Domains}

\begin{abstract}
We show that if $\Omega$ is a vanishing chord-arc domain and $L$ is a divergence-form elliptic operator with H\"older-continuous coefficient matrix, then $\log k_L \in VMO$, where $k_L$ is the elliptic Poisson kernel for $L$ in the domain $\Omega$. This extends the previous work of Kenig and Toro in the case of the Laplacian.

\end{abstract}
\thanks{T.T.  was partially supported by the Craig McKibben \& Sarah Merner Professor in Mathematics, by NSF grants DMS-1664867 and DMS-1954545, and by the Simons Foundation Fellowship 614610. Z.Z. was partially supported by the Institute for Advanced Study and by NSF grants DMS-1664867 and DMS-1902756.}

\subjclass[2010]{35J25, 42B37, 31B35.}

\maketitle
\tableofcontents

\section{Introduction}

In this article we study quantitative, asymptotic regularity of the (elliptic-) Poisson kernel for second order divergence form elliptic operators of the form $L = - \div A(X) \nabla$ in (bounded) rough domains, where the coefficient matrix is assumed to be H\"older continuous. This extends the work of Kenig and Toro \cite{KT-Duke} from the case of $L = -\Delta$, the Laplacian, to this natural class of variable coefficient operators. One may wish to interpret the results here as asymptotic optimality of the solution map for the linear operator $L$, with some extra consideration. Indeed, the solvability of the $L^p$-Dirichlet problem, with accompanying non-tangential estimates, is equivalent to the Poisson kernel satisfying a $L^{p'}$-reverse H\"older condition ($p' = p/(p-1)$). Our result here is equivalent to this condition being satisfied for all $p > 1$ and that for fixed $p$ the constant in the reverse H\"older inequality tends to the optimal value, $1$, when the balls shrink. In fact, the optimality (in the limit) of this constant for any fixed $p > 1$ implies the optimality for all $p > 1$ and $\log k \in VMO$ (see \cite{Korey} for a detailed discussion\footnote{This remarkable theory of `self-improvement' comes from the study of quasiconformal mappings \cite{Gehring, Iwan-Geh}.}). Here our geometric assumptions on the domain are optimal\cite{KT-Lens, AMT-onep}, that is, the domains are chord arc with vanishing constant (see Definitions \ref{deltaVCAD.def} and \ref{vcad-tt}). These domains can be described as having asymptotic flatness (in the sense of Reifenberg \cite{Rei}) coupled with surface measure which behaves asymptotically like Lebesgue measure.

Throughout, the ambient space is $\ree$, $n \ge 2$ and we often make the identification $\ree = \{(x,t) \in \rn \times \re\}$. We work with divergence form elliptic second order differential equations of the form $L = -\div A(X) \nabla$, where the real, $(n+1) \times (n+1)$ matrix-valued function $A$ satisfies the $\Lambda$-ellipticity condition for some $\Lambda \ge 1$, that is, $\|A\|_{L^\infty} \le \Lambda$ and for almost every $X \in \ree$
\[\Lambda^{-1} |\xi|^2 \le \langle A(X)\xi, \xi\rangle, \quad \forall \xi \in \mathbb{R}^{n +1}.\]

Our main result is the following.
\begin{theorem}\label{VCADcorr.thm}
Let $\om \subset \ree$ be a bounded vanishing chord arc domain (see Definitions \ref{deltaVCAD.def} and \ref{vcad-tt}) and $L = -\div A(X) \nabla$ be an elliptic operator, with (real, $\Lambda$-elliptic) coefficients satisfying the H\"older condition 
\begin{equation}\label{cond:Holder}
	|A(X) - A(Y)| \le C_A|X-Y|^\alpha, \quad \forall X, Y \in \ree
\end{equation}
for some $C_A > 0$ and $\alpha \in (0, 1]$. Then $\log k \in VMO$ (see Definition \ref{BVMO.def}), where $k$ is the (elliptic-)Poisson kernel for $L$ on the domain $\om$, that is, $k := \frac{d\hm^{X_0}}{d\sigma}$ for some $X_0 \in \om$. Here $\hm^{X_0}$ is the elliptic measure for $L$ with pole at $X_0$, and $\sigma= \mathcal{H}^n|_{\pom}$ is the surface measure for $\om$. 
\end{theorem}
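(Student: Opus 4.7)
The plan is a blow-up and compactness argument in the spirit of Kenig-Toro \cite{KT-Duke}, adapted to the variable-coefficient setting by using the H\"older condition \eqref{cond:Holder} to freeze the coefficients at each scale. Because $\om$ is chord-arc and $L$ has H\"older-continuous coefficients, the standard theory (Fabes-Jerison-Kenig type results and the $A_\infty$ theory of elliptic measure in chord-arc domains) yields $\hm^{X_0} \in A_\infty(\sigma)$, so the Poisson kernel $k = d\hm^{X_0}/d\sigma$ exists and $\log k \in BMO(\pom)$. The content of Theorem \ref{VCADcorr.thm} is thus the \emph{vanishing} of the local BMO oscillation of $\log k$ on surface balls $\Delta(Q,r) := B(Q,r)\cap\pom$ as $r \to 0$, uniformly in $Q \in \pom$.

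I would argue by contradiction. Suppose there exist $\eta > 0$, boundary points $Q_j \in \pom$, and scales $r_j \downarrow 0$ for which the BMO oscillation of $\log k$ on $\Delta(Q_j,r_j)$ is at least $\eta$. Rescale by setting $\om_j := r_j^{-1}(\om - Q_j)$, $A_j(X) := A(Q_j + r_j X)$, and $L_j := -\div A_j \nabla$. From \eqref{cond:Holder}, $|A_j(X) - A_j(0)| \le C_A r_j^\alpha |X|^\alpha$, so $A_j \to A_\infty$ uniformly on compact sets along a subsequence for some constant, symmetric, $\Lambda$-elliptic matrix $A_\infty$; and the VCAD hypothesis forces $\om_j \to H$ in Hausdorff distance on compact sets, where $H$ is a half-space through the origin, with $\mathcal{H}^n\big|_{\partial\om_j} \to \mathcal{H}^n\big|_{\partial H}$ weakly. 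The normalized Poisson kernel $k_j$ of $L_j$ in $\om_j$ (with a pole and a multiplicative constant chosen so that its average over $\Delta(0,1)\cap\partial\om_j$ equals $1$) satisfies
\[
\|\log k_j\|_{BMO(\Delta(0,1)\cap\partial\om_j)} \;=\; \|\log k\|_{BMO(\Delta(Q_j,r_j))} \;\ge\; \eta,
\]
by scale-invariance and constant-invariance of the BMO oscillation. Using uniform boundary H\"older regularity, the Carleson estimate, the comparison principle for nonnegative $L_j$-solutions, and uniform backward H\"older estimates for $k_j$ (all with uniform constants since $\om_j$ is $\delta_j$-chord arc with $\delta_j\to 0$ and the $A_j$ have uniformly controlled ellipticity and H\"older data), one can upgrade the weak-$\ast$ convergence of the rescaled elliptic measures to $L^1_\loc$-convergence of $\log k_j$ on the boundary, producing a limit $\log k_\infty$, where $k_\infty$ is the Poisson kernel for $L_\infty := -\div A_\infty \nabla$ on $H$. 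After an affine change of variables, $L_\infty$ becomes the Laplacian on a half-space, $k_\infty$ is smooth and strictly positive, and in particular its log has arbitrarily small BMO oscillation on small boundary balls---contradicting the persistence of $\eta > 0$ in the limit.

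The main obstacle is the convergence step: passing from weak-$\ast$ convergence of the elliptic measures $\hm_{L_j}$ to $L^1_\loc$-convergence of their densities (and their logarithms). The crucial inputs are uniform doubling and $A_\infty$ constants for $\hm_{L_j}$ in $\om_j$, uniform backward H\"older estimates for $k_j$ (which give equi-integrability of $k_j$ and $k_j^{-1}$ on compact subsets of $\partial\om_j$), and careful bookkeeping for the change of pole at each scale to keep the normalizations consistent. The H\"older assumption \eqref{cond:Holder} enters the argument only through the observation that $A_j$ stabilizes to a constant matrix in the limit, reducing the problem to the classical half-space analysis of Kenig-Toro; the geometric VCAD condition supplies the half-space boundary in the limit and the uniform chord-arc constants required to keep the boundary estimates for $L_j$ harmless as $j \to \infty$.
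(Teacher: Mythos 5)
Your proposal takes a genuinely different route from the paper: you propose a blow-up/compactness/contradiction argument, whereas the paper argues directly, combining (i) the Auscher--Axelsson(--Hofmann) perturbation theory for layer potentials to show that a flattened H\"older operator is a small two-fold perturbation of a constant-coefficient one and hence has a near-optimal $B_2$ constant on the upper half-space, (ii) a sharp localization lemma comparing elliptic measures of operators that agree near a boundary point, (iii) Semmes decomposition to approximate the chord-arc boundary by small-Lipschitz graphs, and (iv) Korey's quantitative characterization of near-optimal weights to conclude $\log k \in VMO$.

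However, there is a genuine gap in your compactness step, and it is precisely at the spot you flag as the main obstacle. The contradiction hypothesis, written as $\|\log k_j\|_{BMO(\Delta(0,1)\cap\partial\om_j)} \ge \eta$, is a supremum of mean oscillations over \emph{all} scales $s\le 1$, including scales $s_j\to 0$. But your proposed convergence ($L^1_{\loc}$, or even $L^2_{\loc}$) of $\log k_j \to \log k_\infty$ only detects oscillation at a fixed positive scale and is completely blind to what happens as $s\to 0$ on $\partial\om_j$. So from the $L^1_{\loc}$-convergence claim you cannot deduce $\|\log k_j\|_{BMO}\to\|\log k_\infty\|_{BMO}$, and the asserted contradiction does not follow. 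To fix this you would have to extract a single scale of failure at each stage — i.e. choose $r_j$ to be the precise scale at which the mean oscillation is $\ge\eta$, and rescale by \emph{that} — after which the rescaled bound lives at unit scale and is indeed stable under $L^2_{\loc}$-convergence. Furthermore, the conclusion as phrased ("$k_\infty$ is smooth ... so $\log k_\infty$ has small BMO oscillation on small boundary balls") does not produce a contradiction at unit scale; the correct mechanism is that the pole $r_j^{-1}(X_0 - Q_j)$ escapes to infinity, whence the normalized $k_j$ converge to the constant function $1$ on $\Delta(0,1)$, and it is the vanishing of oscillation of that constant limit that gives the contradiction. Two further issues are worth noting: the limit matrix $A_\infty$ need not be symmetric (that it can be $replaced$ by $(A_\infty + A_\infty^T)/2$ for a constant matrix is a fact that must be invoked explicitly, as in Lemma \ref{constnondegen.lem}); and the uniform reverse H\"older/$A_\infty$ estimates for the rescaled kernels $k_j$ that you need for equi-integrability are themselves a substantive input that must be established for variable-coefficient operators in chord-arc domains — this is essentially the paper's Claim \ref{orighmisAinfty.cl}, which in turn uses the $\epsilon$-perturbation machinery. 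So even a repaired compactness argument would rest on a significant part of the paper's quantitative toolkit.
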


Let us lay out the structure of the proof of Theorem \ref{VCADcorr.thm}. The restriction to real coefficients is necessary to define the elliptic measure (via the maximum principle and the Riesz representation theorem) as the solution map for the Dirichlet problem for $L$ on the domain $\om$. (Note that chord arc domains are Wiener regular and therefore the solution to the $L$-Dirichlet problem for some $f \in C_c(\pom)$ is 
 $u_f(X_0) = \int_{\pom} f(y) \, d\hm^{X_0}(y)$.) On the other hand, some of the solvability results which serve as the starting point for our analysis rely on (complex) analytic perturbation theory \cite{AAAHK, AAH, AAMc}. Indeed, our study begins with operators on the upper half space, where there is much known for both transversally independent complex $L^\infty$ perturbations of transversally independent operators with constant coefficients and operators which are a Dahlberg-Fefferman-Kenig-Pipher \cite{Dahlbergperturb, FKP} type\footnote{The perturbations are a quantitative, `averaged' refinement of those in \cite{FJK}.}  (`transversal') perturbations. For the former we prefer to cite the treatment in \cite{AAH} and for the latter \cite{AA}. These perturbations are maintained under pull-back on small Lipschitz graph domains and due to the H\"older condition we may view our operator (even after pull-back) as a two-fold perturbation of a constant coefficient matrix. Thus, to conclude Theorem \ref{VCADcorr.thm}, we employ `good' approximation schemes developed in \cite{DJ, Semmes-DJwNTA} and  \cite{KT-Duke, Semmes-SmallCAD1}, whereby one approximates a vanishing chord arc domain by domains with small Lipschitz constant and makes the delicate estimates required to show $\log k \in VMO$. 
Here the former approximation in \cite{DJ, Semmes-DJwNTA} allows us to establish rough `$A_\infty$ estimates' and the later approximation in \cite{KT-Duke, Semmes-SmallCAD1} allows us to establish the refined, asymptotic estimates of Theorem \ref{VCADcorr.thm} (the `rough' $A_\infty$ estimates are needed to control some errors).

This paper here brings together tools from partial differential equations, harmonic analysis and geometric measure theory developed over the last 40 years. We attempt (perhaps in vain) to give a reasonable account of the relevant results to the current work. For the harmonic measure, in 1976, Dahlberg \cite{DahlbergRH2} showed that in a Lipschitz domain the Poisson kernel satisfies an $L^2$-reverse H\"older condition, which, as we mentioned above, implies the $L^2$ solvability of the Dirichlet problem. This sparkle a deep interest in the study of elliptic operators in rough sets that has persisted for decades. In 1982, Jerison and Kenig \cite{JK-VMO} showed that in a bounded
$C^1$ domain $\log k \in VMO$. In 1997, Kenig and Toro \cite{KT-Duke} extended the work of Jerison and Kenig to vanishing chord arc domains, that is, Theorem \ref{VCADcorr.thm} with $L = -\Delta$, by using a version of the Semmes decomposition \cite{Semmes-SmallCAD1}.

In this context, for the variable coefficient case there is a good perturbative theory, see \cite{Esc, FKP, MPT}. One can extrapolate optimal Poisson kernel regularity ($\log k \in VMO$) from one operator to another, whenever the discrepancy between the operators is a vanishing perturbation of Dahlberg-Fefferman-Kenig-Pipher type. On the other hand, aside from constant coefficient operators and their (vanishing) perturbations, there appears to be a lack of understanding of the properties that characterize an operator for which
 $\log k \in VMO$. This paper provides an important example of a natural class of operators for which this is the case.
 It also addresses a gap in \cite{MT}. There the authors used the fact that for a uniformly elliptic operator with H\"older coefficients on Lipschitz domain with small Lipschitz constant 
 $\log k$ has small $BMO$ norm. This result had not been established. In fact attempts to fix this gap by standard methods were unsuccessful. A completely new idea is required to prove this fact, and its introduction is one of the major original contributions of the current paper. There were also small gaps and errors in some `localization estimates' in \cite{MT}, so we carefully reprove these results. For the most part the techniques in these `localization estimates' follow \cite{MT, KT-Duke}, but the proof of Theorem \ref{VCADcorr.thm} is completely different and require a number of new ingredients.

As mentioned above, we leverage powerful, refined theorems in the study of elliptic boundary value problems. We are particularly reliant on the theory built from layer potentials and the operational calculus of first order Dirac operators associated to divergence form elliptic operators with variable coefficients. The modern treatment of these objects was shaped by Auscher, Axelsson, Hofmann, McIntosh and many others \cite{AAAHK, AAH, AAMc, AHLMcT}. We refer the reader to \cite{AAAHK} for a relatively comprehensive history, but we remark that these works grew out of the testing conditions (`T1/Tb theory') for singular integrals and Littlewood-Paley type operators \cite{CoifmanMeyer, ChristJourne, DavidJourne, DavidJourneSemmes, McIntoshMeyer}, and the generalizations of this theory. Perhaps the most notable such generalization led to the resolution of the Kato conjecture \cite{AHLMcT} and served as the basis for the results in \cite{AAAHK, AAH, AAMc}. These works allow us to treat the $L^\infty$-perturbation, while we use the results in \cite{AA} to handle the Dahlberg-Fefferman-Kenig-Pipher type perturbation (the key is that we can locally write our operator as a two-fold perturbation). 
\smallskip

\textbf{Acknowledgement}: The first named author would like to thank Pascal Auscher, Moritz Egert and Steve Hofmann for helpful conversations concerning the first-order method and the perturbative theory for elliptic boundary value problems.

\section{Preliminaries}
First we introduce notation that will be standard throughout. For notation specific to chord arc domains and their variants with small constants, see the next subsections (Sections \ref{CADs.sect} and \ref{sec:sCAD}). 
Throughout the paper, by allowable constants we always mean the dimension $n\geq 2$, the ellipticity constant $\Lambda\geq 1$ and the H\"older constants $C_A>0$ and $\alpha \in (0,1]$.

\subsection{Notation}\label{sect:notation}
\begin{itemize}
\item Given a domain $\om \subset \ree$ with boundary $\pom$, for $x \in \pom$ and $r \in (0, \diam \pom)$ we let $\Delta(x,r) := B(x,r) \cap \pom$ denote the {\bf surface ball} of radius $r$ centered at $x$. We make clear which surface measure we are using any time there is possible ambiguity (e.g. when dealing with multiple domains simultaneously). 
\item Given $x = (x_1, \dots x_{n+1}) \in \ree$ (or $x \in \rn$ resp.) we define $|x|_\infty = \sup_{i}\{|x_i|\}$ to be the $\ell^\infty$ norm of $x$. Similarly, we let $|x| := |x|_2$ be the standard Euclidean ($\ell^2$) distance.
\item When working with the upper half space ($\ree_+ := \{(x,t) \in \rn \times \re: t > 0\}$) we use the following notation. Let $y \in \rn = \rn \times \{0\}$ and $r > 0$ then we define:
\begin{itemize}
\item The cube $Q(y,r) := \{x \in \rn \times \{0\}: |x-y|_\infty < r\}$, with side length $2r$ and the (surface) ball $\Delta(y,r) := \{x \in \rn \times \{0\}: |x-y|< r \}$.
\item Given an $n$ dimensional cube $Q = Q(y,r)$ we let $\ell(Q) := 2r$ be the side length of the cube.
\item Given an $n$-dimensional cube $Q$ we let $R_Q$ be the Carleson box relative to $Q$, that is, $R_Q = Q \times (0,\ell(Q))$.
\item The Whitney regions $W(y,r) = \Delta(y,r/2) \times (r/2, 3r/2)$ and 
\[D(y,r) : = B(y,10r) \cap \{(x,t): x \in \rn, t > r/2\}.\]
\end{itemize}
\item Given a (real) divergence form elliptic operator $L = -\div A \nabla$ we define its transpose (and, in this case, adjoint) $L^T :=  -\div A^T \nabla$, where $A^T$ is the transpose of $A$, that is $(A^T)_{i,j} = (A)_{j,i}$. 
\end{itemize}

\begin{definition}[Lipschitz domains]\label{lipdomains.dom}
We say a domain (connected open set) $\om \subset \ree$ is a {\bf $\gamma$-Lipschitz domain} if for every $x \in \pom$ there exists $r >0$ and an isometric coordinate system with origin $x = \oo$ such that 
\begin{equation}\label{lipdomainregion.eq}
\{Y \in \mathbb{R}^{n+1} : |Y - x|_\infty < r\} \cap \om = \{Y \in \mathbb{R}^{n+1}: |Y - x|_\infty < r\} \cap \{(y,t) : y \in \rn, t > \varphi(y)\}
\end{equation}
for some Lipschitz function $\varphi: \rn \to \re$ with $\varphi(\oo) = \oo$ and $\|\nabla \varphi\|_\infty \le \gamma$. We say a domain is a Lipschitz domain if it is a $\gamma$-Lipschitz domain for some $\gamma \ge 0$. 
We call a domain $\om \subset \ree$ of the form 
\[ \om := \{(y,t) : y \in \rn, t > \varphi(y)\}\]
for some Lipschitz function $\varphi: \rn \to \re$ with $\|\nabla \varphi\|_\infty < \infty$ a Lipschitz graph domain (and if $\|\nabla \varphi\|_\infty \le \gamma$, a $\gamma$-Lipschitz graph domain). 
\end{definition}

\begin{remark}\label{charts.rmk}The number of `charts' needed to cover the boundary in the definition of Lipschitz domain is often important, but if $\pom$ is bounded the compactness of the boundary ensures that only a finite number of charts is required. If $\om$ is a Lipschitz graph domain only one chart is needed. When we work with Lipschitz domains the number of charts will always be {\it uniformly} bounded.
\end{remark}

\begin{definition}[FKP-Carleson Norm]\label{Carnorm.def}
Given any matrix $B = B(x,t)$ defined on $\ree_+ := \{(x,t): x \in \rn, t > 0\}$ we define the Carleson norm of $B$ as
\[\|B\|_{\C}: = \sup_{Q}\left(\frac{1}{|Q|} \iint_{R_Q}\|B\|^2_{L^\infty(W(x,t))}(x,t)\,  \frac{dx\, dt}{t} \right)^{1/2},\]
where the supremum is taken over all cubes $Q \subset \rn$, $R_Q$ is the Carleson box $Q \times (0,\ell(Q))$  and we recall $W(x,t) = \Delta(x,t/2) \times (t/2, 3t/2)$.
\end{definition}

\begin{definition}[Nontangential Maximal function]\label{ntmax.def}
Given any locally $L^2$-integrable function $F:\ree_+ \to \re$ we define the ($L^2$-modified) non-tangential maximal function $\ntt F: \rn \to \re$ as
\begin{equation}\label{def:NF}
	\ntt F(x): = \sup_{t > 0} \left(\fint_{W(x,t)} |F(y,s)|^2 \, dy \, ds\right)^{1/2}.
\end{equation} 
For $p > 1$ we also define  the $L^p$-modified non-tangential maximal function $\ntt^p F: \rn \to \re$ as
\[\ntt^p (F(x)): = \sup_{t > 0} \left(\fint_{W(x,t)} |F(y,s)|^p \, dy \, ds\right)^{1/p}.\]
\end{definition}
\begin{remark}
	We use the notation $\widetilde{N}_*F$ to distinguish it with the standard non-tangential maximal function defined using $L^\infty$ norm. 	
\end{remark}

\subsection{PDE estimates in chord arc domains}\label{CADs.sect}
In this subsection we define chord arc domains and state without proof some well-known results about solutions to elliptic operators as well as elliptic measures on such domains. These two non-tangential maximal functions are equivalent if Moser's estimate holds.

Chord arc domains are domains with scale-invariant connectivity (Harnack chains), interior and exterior openness (corkscrews) and whose boundaries are quantitatively ``$n$-dimensional" (Ahlfors regular boundary). 

\begin{definition}[Two-sided Corkscrew condition {\cite{JK-NTA}}]\label{tscs.def}
We say a domain (an open and connected set) $\om \subset \ree$ satisfies the two-sided corkscrew condition if there exists a uniform constant $M > 2$ such that for all $x \in \pom$ and $r \in (0, \diam \pom)$ there exists $X_1,X_2 \in \ree$ such that
\[B(X_1, r/M) \subset B(x,r) \cap \om, \quad B(X_2, r/M) \subset B(x,r) \setminus \overline{\om}.\]
In the sequel, we write $\A(x,r) := X_1$, for the interior corkscrew point for $x$ at scale $r$. 
\end{definition}

\begin{definition}[Harnack chain condition {\cite{JK-NTA}} ]
We say a domain $\om \subset \ree$ satisfies the Harnack chain condition if there exists a uniform constant $M \ge 2$ such that if $X_1, X_2 \in \om$ with $\dist(X_i, \pom) > \epsilon > 0$ and $|X_1 - X_2|< 2^k \epsilon$ then there exists a `chain' of open balls $B_1, \dots, B_N$ with $N < M k$ such that $X_1 \in B_1$, $X_2 \in B_N$, $B_j \cap B_{j+1} \neq \emptyset$ for $j = 1, \dots, N-1$ and $M^{-1} \diam B_j \le \dist(B_j, \pom) \le M \diam B_j$ for $j = 1, \dots, N$.
\end{definition}

\begin{definition}[Ahlfors regular]
We say a set $E \subset \ree$ is Ahlfors regular if $E$ is closed and there exists a uniform constant $C$  such that
\[C^{-1}r^n \le H^n(B(x,r) \cap E) \le Cr^n, \quad \forall x \in E, \forall  r\in (0, \diam E).\]
\end{definition}

\begin{definition}[NTA and chord arc domains {\cite{JK-NTA}}]
We say a domain $\om \subset \ree$ is and NTA domain if it satisfies the two-sided corkscrew condition and the Harnack chain condition. We say a domain $\om \subset \ree$ is a {\bf chord arc domain} if it is an NTA domain and $\pom$ is Ahlfors regular. We refer to the constants $M$ and $C$ in the definitions of the two-sided corkscrew condition, Harnack chain condition and the Ahlfors regularity condition as the `chord arc constants'. 
\end{definition}

\begin{remark}Every bounded ($\gamma$-)Lipschitz domain is a chord arc domain as are Lipschitz graph domains. See Remark \ref{charts.rmk}.\end{remark}

Now we give several results on the behavior of solutions to real divergence form elliptic equations in chord arc domains. We note that while the original results are stated for harmonic functions their proofs carry over for real divergence form elliptic equations as the primary tools (e.g. the Harnack inequality and H\"older continuity at the boundary \cite{HKM-Book}) are still available and only introduce dependence on the ellipticity parameter.
We also remark that these estimates are suitably local. For instance, if $\om \subset \ree$ is a $\gamma$-Lipschitz domain and we work `well-inside'\footnote{Here this means, $\{Y : |Y - x|_\infty \ll r\} \cap \om\}$ in Definition \ref{lipdomains.dom}.} a region as in  \eqref{lipdomainregion.eq} then the estimates on the boundary behavior of solutions depend on dimension, ellipticity and the parameter $\gamma$.   We refer the reader to \cite{CFMS, KKoPT, JK-NTA}. In the remainder of this section, $L = -\div A \nabla$ is a second order divergence form elliptic operator with real, $\Lambda$-elliptic coefficients. 

\begin{lemma}[Carleson estimate \cite{JK-NTA}]\label{cscarlesonest.lem}
Let $\om \subset \ree$ be a chord arc domain, $x \in \pom$ and $10r \in (0,\diam \pom)$. If $Lu = 0$, $u \ge 0$ in $\om \cap B(x,2r)$ and $u$ vanishes continuously on $B(x, 2r) \cap \pom$ then
\[u(Y) \le C u(\A(x,r)), \quad \forall Y \in B(x,r) \cap \om.\]
Here $C > 0$ depends on $n$, $\Lambda$ and the chord arc constants for $\om$.
\end{lemma}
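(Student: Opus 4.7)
The plan is a standard dichotomy-and-iteration argument that leverages the three structural features of a chord arc domain: the interior corkscrew condition supplies the reference point $\A(x,r)$; the Harnack chain condition transfers values between interior points at a controlled cost; and the exterior corkscrew condition forces a uniform capacity density condition on $\pom$, which in turn yields boundary H\"older continuity for non-negative $L$-solutions vanishing on a portion of $\pom$. Only the last is non-elementary; it follows from the De Giorgi--Nash--Moser machinery for variable-coefficient divergence-form operators as developed in \cite{HKM-Book}, yielding an estimate of the form
\[ u(Y) \le C_0 \left(\frac{\dist(Y, \pom)}{\rho}\right)^{\alpha} \sup_{B(z, \rho) \cap \om} u \]
for $z \in \pom$, $Y \in B(z, \rho/2) \cap \om$, and $\rho > 0$ such that $u$ vanishes on $B(z, 2\rho) \cap \pom$, with $\alpha \in (0,1)$ and $C_0$ depending only on the allowable constants and the chord arc constants.

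Normalizing $u(\A(x,r)) = 1$, set $M(\rho) := \sup_{B(x,\rho) \cap \om} u$ for $\rho \in [r, 3r/2]$; the goal is to bound $M(r)$ by an allowable constant. Pick $Y_0 \in \overline{B(x, r) \cap \om}$ with $u(Y_0) \ge M(r)/2$ and split on $\delta_0 := \dist(Y_0, \pom)$. In the first case, $\delta_0 \ge \eta r$ for a parameter $\eta > 0$ to be chosen; then $Y_0$ and $\A(x,r)$ both lie at distance $\gtrsim_\eta r$ from $\pom$ and within distance $\lesssim r$ of each other, so the Harnack chain condition produces a chain of length $\lesssim \log(1/\eta)$ of Whitney balls connecting them, and iterating Harnack's inequality along this chain gives $u(Y_0) \le C_\eta$, hence $M(r) \le 2 C_\eta$. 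In the second case, $\delta_0 < \eta r$; choosing a nearest point $y_0 \in \pom$ to $Y_0$, one has $B(y_0, r) \subset B(x, 2r)$, so applying the boundary H\"older estimate at $y_0$ with $\rho = r/2$ yields
\[ u(Y_0) \le C_0 (2\eta)^{\alpha} \sup_{B(y_0, r/2) \cap \om} u \le C_0 (2\eta)^{\alpha} M(3r/2). \]
Combining the two cases gives $M(r) \le 2 C_\eta + 2 C_0 (2\eta)^{\alpha} M(3r/2)$.

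To close the estimate I would iterate this dichotomy across a geometric sequence of radii $r = \rho_0 < \rho_1 < \cdots \nearrow 3r/2$, applying the same argument at scale $\rho_k$ (with the reference point $\A(x,r)$ unchanged, so that the Harnack-chain cost in the first case remains allowable) to obtain $M(\rho_k) \le 2 C_\eta + 2 C_0 (2\eta)^{\alpha} M(\rho_{k+1})$; fixing $\eta$ small enough that $2 C_0 (2\eta)^{\alpha} \le 1/2$ makes the recursion contractive and yields a uniform bound on $M(r)$, which is the desired estimate. The only delicate step in the whole scheme is the boundary H\"older decay used in the second case; this is exactly the place where the exterior corkscrew hypothesis (not merely the interior corkscrew or Harnack chain) enters, and it is what dictates the dependence of the constant $C$ on the chord arc constants. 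Everything else is Harnack bookkeeping.
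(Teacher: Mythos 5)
The paper does not prove this lemma itself; it cites \cite{JK-NTA} (their Lemma 4.4), noting that the argument carries over to real divergence-form operators. So there is no in-paper proof to compare against, and your proposal must be judged on its own merits. Your choice of ingredients is exactly right: interior corkscrew to define the reference point, Harnack chains for points away from the boundary, and boundary H\"older decay (the step where the exterior corkscrew/capacity density condition enters) for points near the boundary.

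The iteration itself, however, does not close. You fix both the boundary-H\"older scale at $\rho = r/2$ and the near-boundary threshold at $\eta r$ independently of $k$, and claim $M(\rho_k) \le 2C_\eta + 2C_0(2\eta)^\alpha M(\rho_{k+1})$ along a sequence $\rho_k \nearrow 3r/2$ with $\rho_{k+1}-\rho_k \to 0$. Two things go wrong. First, the sup that the boundary-H\"older estimate produces is over $B(y_0, r/2)\cap\om$, a ball of \emph{fixed} radius $r/2$ centered at a boundary point within $\eta r$ of $Y_0 \in \partial B(x,\rho_k)$; this ball reaches out to radius $\rho_k + \eta r + r/2$ from $x$, which exceeds $\rho_{k+1}$ whenever $\rho_{k+1}-\rho_k < r/2 + \eta r$ --- that is, for all but boundedly many $k$. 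So that sup is not controlled by $M(\rho_{k+1})$, and the claimed recursion is simply false. (Already at $k=0$ your assertion $B(y_0,r)\subset B(x,2r)$ fails, since $|y_0-x|$ can be as large as $(1+\eta)r$.) Second, once $\rho_k + \eta r + r/2$ exceeds $2r$, the ball $B(y_0, 2\rho)$ on which the boundary-H\"older lemma needs $u$ to vanish leaves $B(x, 2r)$, so the estimate does not even apply. If instead you try to shrink the boundary-H\"older scale so that the resulting sup lands in $B(x, \rho_{k+1})$, you must shrink $\eta$ with it (else the factor $(\delta_0/s_k)^\alpha$ fails to be small), and then the Harnack-chain constant $C_{\eta_k}$ grows with $k$, and whether $\sum_k C_{\eta_k} \theta^k$ converges depends on the size of the Harnack constant --- it is not automatic. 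The actual argument in \cite{JK-NTA} is a contradiction/chain argument: it produces a single escaping sequence $Y_0, Y_1, \dots$ with $u(Y_k)$ growing geometrically while the boundary-H\"older scales shrink geometrically at a rate calibrated against the Harnack constant, and it uses the freedom to choose the would-be Carleson constant large to keep the $Y_k$ inside $B(x, 2r)$, finally contradicting the continuity of $u$ on a compact set. That multi-scale calibration is the step your plan is missing.
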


\begin{lemma}[H\"older continuity at the boundary \cite{JK-NTA,HKM-Book}]\label{HCatbdry.lem}
Let $\om \subset \ree$ be a chord arc domain, $x \in \pom$ and $10r \in (0,\diam \pom)$. If $Lu = 0$, $u \ge 0$ in $\om \cap B(x,4r)$ and $u$ vanishes continuously on $B(x, 4r) \cap \pom$ then
\[u(Y) \le C\left(\frac{|Y - x|}{r} \right)^\mu \sup\{u(Z): Z \in \partial B(x,2r)\cap \om \} \le C\left(\frac{|Y - x|}{r} \right)^\mu u(\A(x,r)), \]
for all $Y \in B(x,r) \cap \om$.
Here $C > 0$ and $\mu \in (0, 1)$ depend on $n$, $\Lambda$ and the chord arc constants for $\om$.
\end{lemma}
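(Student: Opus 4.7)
The plan is to establish the usual boundary oscillation decay via iteration of a single-scale reduction that exploits the exterior corkscrew condition, and then to pass to the second inequality by invoking the Carleson estimate (Lemma \ref{cscarlesonest.lem}) already at our disposal.

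\textbf{Step 1: Single-scale reduction.} The key ingredient is the fact that, by the two-sided corkscrew condition, for every $s \in (0, 4r]$ the complement of $\om$ contains a ball $B(X_2, s/M) \subset B(x,s) \setminus \overline{\om}$. For divergence form operators with real bounded measurable coefficients, this implies a uniform lower bound on the $L$-capacity density of $\om^c$ at $x$ at every scale. Consequently, by the Wiener-type estimate for non-negative $L$-subsolutions vanishing on a set of positive capacity density (see \cite{HKM-Book}), there exists $\eta \in (0,1)$ depending only on $n$, $\Lambda$ and the chord arc constants such that
\[
\sup_{B(x, s/2) \cap \om} u \;\le\; (1-\eta) \sup_{B(x, s) \cap \om} u, \qquad s \in (0, 2r].
\]
Equivalently, one can argue by a maximum principle comparison: extending $u$ by zero to $B(x,s) \setminus \om$ gives an $L$-subsolution in $B(x,s)$, and the solution $v$ of $Lv = 0$ in $B(x,s) \setminus B(X_2, s/M)$ with data $\sup_{B(x,s) \cap \om} u$ on $\partial B(x,s)$ and $0$ on $\partial B(X_2, s/M)$ satisfies $v \le (1-\eta)\sup_{B(x,s)\cap \om} u$ on $B(x, s/2)$ by a harmonic-measure/barrier argument.

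\textbf{Step 2: Iteration.} Let $Y \in B(x,r) \cap \om$, set $\rho := |Y-x|$, and pick the smallest integer $k \ge 0$ with $2^{-k}(2r) \le \rho$. Applying the single-scale reduction $k$ times starting from scale $2r$ gives
\[
\sup_{B(x, 2^{-k}\cdot 2r) \cap \om} u \;\le\; (1-\eta)^k \sup_{B(x, 2r)\cap \om} u,
\]
and since $u$ is continuous up to $\partial B(x,2r)\cap\pom$, the maximum principle bounds the right-hand side by $\sup\{u(Z): Z \in \partial B(x,2r) \cap \om\}$. Choosing $\mu := \log_2 \tfrac{1}{1-\eta}$, the relation $2^{-k} \lesssim \rho/r$ yields
\[
u(Y) \;\le\; C \Big(\tfrac{|Y-x|}{r}\Big)^{\mu} \sup\big\{u(Z): Z \in \partial B(x,2r) \cap \om\big\},
\]
which is the first inequality.

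\textbf{Step 3: Absorbing the boundary sup into a value at a corkscrew point.} For any $Z \in \partial B(x,2r) \cap \om$ we have $|Z - x| = 2r$, so $Z \in B(x, 3r) \cap \om$, and by hypothesis $u \ge 0$, $Lu = 0$ in $B(x, 4r)\cap \om$ with $u$ vanishing continuously on $B(x,4r)\cap \pom$. This is precisely the setting of the Carleson estimate (Lemma \ref{cscarlesonest.lem}) applied at scale $\sim 2r$ (after a harmless rescaling), which gives
\[
\sup\big\{u(Z): Z \in \partial B(x,2r) \cap \om\big\} \;\le\; C\, u(\A(x, r)),
\]
where $C$ depends only on $n$, $\Lambda$ and the chord arc constants. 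Combining this with Step 2 yields the second inequality.

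\textbf{Main obstacle.} The only non-trivial point is the single-scale reduction in Step 1: one needs that a real, $\Lambda$-elliptic divergence form operator sees the exterior corkscrew ball as a set of positive capacity density, and this is the content of the Wiener criterion in \cite{HKM-Book}. Once this is granted, the remainder is a standard iteration plus a direct appeal to Lemma \ref{cscarlesonest.lem}.
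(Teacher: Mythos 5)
Your proof is correct and is precisely the standard argument: the paper itself does not prove this lemma but cites \cite{JK-NTA} and \cite{HKM-Book}, whose treatments are exactly your Step~1 (exterior corkscrew gives uniform capacity density of $\om^c$, hence a single-scale oscillation decay for the zero-extended subsolution) followed by your Step~2 iteration, with the second inequality absorbed via the Carleson estimate. The only imprecisions are of the bookkeeping variety: in Step~2 your choice of $k$ is off by one (with $2^{-k}(2r)\le\rho$ the point $Y$ need not lie in $B(x,2^{-k}\cdot 2r)$, so one should iterate $k-1$ times — a harmless factor of $(1-\eta)^{-1}$), and in Step~3 applying Lemma~\ref{cscarlesonest.lem} at scale $2r$ formally calls for $20r<\diam\pom$ rather than the stated $10r$, which is resolved either by adjusting the constants in the scale restriction or by covering $\partial B(x,2r)\cap\om$ with smaller boundary balls and interior Harnack balls; one also needs a Harnack chain to replace $u(\A(x,2r))$ by $u(\A(x,r))$. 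None of these affect the substance of the argument.
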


A simple consequence of this lemma is the following, which is sometimes referred to as Bourgain's estimate.

\begin{lemma}[Bourgain's estimate \cite{Bourgain, HKM-Book}]\label{Bourgain.lem}
Let $\om \subset \ree$ be a chord arc domain, $x \in \pom$ and $10r \in (0,\diam \pom)$. Then
\[\hm^{\A(x,r)}(B(x,r)) \gtrsim 1,\]
where the implicit constants depend on $n$, $\Lambda$ and the chord arc constants for $\om$. Here $\hm^X$ is the elliptic measure for the operator $L$ on $\om$ with pole at $X$. In particular, by the Harnack inequality, for $x \in \pom$ and $10r < 10R \in (0,\diam \pom)$
\[\hm^{\A(x,R)}(B(x,r)) \gtrsim 1, \]
where the implicit constants depend on $n$, $\Lambda$ and the chord arc constants for $\om$ and the ratio $R/r$.
\end{lemma}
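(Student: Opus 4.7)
The plan is to combine Lemma \ref{HCatbdry.lem} (H\"older continuity at the boundary) with the Harnack chain condition: first I would use the H\"older estimate to force $\hm^Y(B(x,r))$ to be bounded below on some small ball around $x$, and then transfer that bound to the corkscrew point $\A(x,r)$ by iterating Harnack's inequality along a Harnack chain.

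Set $v(Y) := 1 - \hm^Y(B(x,r)) = \hm^Y(\pom \setminus B(x,r))$, a non-negative $L$-solution on $\om$ bounded by $1$. Since $\om$ is NTA (hence Wiener-regular, so every point of $\pom$ is a regular boundary point; cf.\ \cite{HKM-Book}) and $\chi_{\pom \setminus B(x,r)}$ is continuous and equals $0$ at every point of the relatively open set $\pom \cap B(x,r)$, the Perron solution $v$ vanishes continuously on $\pom \cap B(x,r)$. Applying Lemma \ref{HCatbdry.lem} with outer scale $s := r/4$ (so that the vanishing set $\pom \cap B(x, 4s)$ required by the lemma is exactly $\pom \cap B(x,r)$), together with the trivial bound $v(\A(x,s)) \le 1$, yields
\[v(Y) \;\le\; C \left(\frac{|Y-x|}{s}\right)^{\!\mu} v(\A(x,s)) \;\le\; C\left(\frac{4|Y-x|}{r}\right)^{\!\mu}, \qquad \forall\, Y \in B(x,s) \cap \om.\]

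Now choose $\rho \in (0, 1/4)$, depending only on $n$, $\Lambda$, and the chord arc constants of $\om$, so that $C(4\rho)^\mu \le 1/2$. Then $\hm^Y(B(x,r)) \ge 1/2$ for every $Y \in B(x, \rho r) \cap \om$, and in particular for the interior corkscrew $Y = \A(x, \rho r)$. Finally, the Harnack chain condition provides a chain of open balls in $\om$ joining $\A(x, \rho r)$ to $\A(x, r)$, of length bounded in terms of $\rho$ and the chord arc constant $M$ (both points lie at distance $\gtrsim \rho r$ from $\pom$ and at distance $\lesssim r$ from each other). Iterating Harnack's inequality along this chain for the non-negative $L$-solution $Y \mapsto \hm^Y(B(x,r))$ gives $\hm^{\A(x,r)}(B(x,r)) \gtrsim \hm^{\A(x,\rho r)}(B(x,r)) \ge 1/2$, with implicit constants depending only on the allowable parameters. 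The second (``in particular'') assertion then follows from one more application of the Harnack chain condition to connect $\A(x,r)$ and $\A(x,R)$, with chain length controlled by $R/r$.

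The only delicate point, and the one I would flag as the main obstacle, is the continuous vanishing of $v$ on $\pom \cap B(x,r)$, which is required to invoke Lemma \ref{HCatbdry.lem} cleanly. If one wishes to sidestep direct appeal to Wiener-regularity, an equivalent route is to approximate $\chi_{\pom \setminus B(x,r)}$ pointwise from above by continuous cutoffs $f_\eps \in C(\pom)$ supported in $\pom \setminus B(x, r-\eps)$, apply Lemma \ref{HCatbdry.lem} to the classical solutions $u_{f_\eps}(Y) := \int f_\eps \, d\hm^Y$ at the slightly smaller outer scale $(r-\eps)/4$, and pass to the limit $\eps \to 0^+$ by dominated convergence; this yields the same pointwise bound on $v$ and hence the same conclusion.
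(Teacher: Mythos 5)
Your proof is correct and executes exactly the plan the paper signals by calling Bourgain's estimate ``a simple consequence of'' Lemma~\ref{HCatbdry.lem}: apply the boundary H\"older estimate to $v(Y)=\omega^Y(\pom\setminus B(x,r))$ at outer scale $r/4$ to get a lower bound for $\omega^Y(B(x,r))$ near $x$, and then travel to $\A(x,r)$ (and to $\A(x,R)$ for the ``in particular'' assertion) via Harnack chains. The concern you flag about continuous vanishing of $v$ on $\pom\cap B(x,r)$ is indeed the only delicate point, and it is resolved by Wiener regularity of NTA/chord-arc domains (every boundary point is regular, and $\chi_{\pom\setminus B(x,r)}$ is continuous and equal to $0$ on the open surface ball, so the Perron solution $v$ vanishes continuously there); your cutoff-approximation alternative is also a valid fallback.
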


\begin{lemma}[CFMS estimate \cite{CFMS}]\label{CFMS.lem}
Let $\om \subset \ree$ be a chord arc domain, $x \in \pom$ and $10r \in (0,\diam \pom)$. If $X_0 \in \om \setminus B(x,4r)$ then 
\[\frac{\hm^{X_0}(B(x,r))}{r^{n-1}G(X_0, \A(x,r))} \approx 1,\]
where the implicit constants depend on $n$, $\Lambda$ and the chord arc constants for $\om$. Here $\hm^X$ is the elliptic measure for the operator $L$ on $\om$ with pole at $X$ and $G(X,Y)$ is the $L$-Green function for $\om$ with pole at $Y$.
\end{lemma}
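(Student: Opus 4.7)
The plan is to prove the matching upper and lower bounds separately. Set $A_r := \A(x,r)$ and $\Delta_r := \Delta(x, r)$. A preliminary observation used throughout both halves: since $X_0 \in \om \setminus B(x, 4r)$, the function $Y \mapsto G(X_0, Y)$ is a non-negative $L^T$-solution in $\om \cap B(x, 4r)$ vanishing continuously on $\pom \cap B(x, 4r)$, so Lemma \ref{cscarlesonest.lem} applied to $L^T$ (which has the same ellipticity constants as $L$) combined with Harnack's inequality gives
\[
\sup_{Y \in \om \cap B(x, 2r)} G(X_0, Y) \le C\, G(X_0, A_r).
\]

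For the upper bound $\hm^{X_0}(\Delta_r) \lesssim r^{n-1} G(X_0, A_r)$, I would test Green's representation formula against a smooth cutoff. Let $\varphi \in C_c^\infty(B(x, 3r/2))$ with $\varphi \equiv 1$ on $B(x,r)$ and $|\nabla\varphi| \le C/r$. Since $\varphi(X_0) = 0$, Green's identity combined with integration by parts on the source term (whose $\pom$-boundary contribution vanishes because $G(X_0, \cdot) = 0$ on $\pom$) gives
\[
\hm^{X_0}(\Delta_r) \;\le\; \int_{\pom}\varphi\, d\hm^{X_0} \;=\; -\int_\om A(Z)\nabla\varphi(Z) \cdot \nabla_Z G(X_0, Z)\, dZ \;\le\; \frac{C}{r}\int_{\om \cap B(x, 3r/2)} |\nabla_Z G(X_0, Z)|\, dZ.
\]
Cauchy--Schwarz followed by Caccioppoli's inequality (converting the $L^1$-gradient bound to an $L^2$-norm of $G$ on a slightly larger ball) and the Carleson/Harnack bound on $G$ above then yield the claimed upper bound.

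For the lower bound, I would run a weak maximum-principle comparison in the style of \cite{CFMS, JK-NTA}. By Bourgain's estimate (Lemma \ref{Bourgain.lem}) applied to $L^T$, the non-negative $L^T$-solution $h(Y) := \hm^Y_{L^T}(\Delta(x, 2r))$ (the $L^T$-elliptic measure of $\Delta(x, 2r)$ seen from $Y$) satisfies $h(A_r) \ge c_0 > 0$ and vanishes weakly on $\pom \setminus \Delta(x, 2r)$. Inside the corkscrew ball $B(A_r, r/(4M))$, Harnack's inequality bounds both $G(X_0, \cdot)/G(X_0, A_r)$ and $h/h(A_r)$ between universal constants. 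The plan is to apply the maximum principle to the $L^T$-subsolution $G(X_0, \cdot) - C\, G(X_0, A_r)\, h(\cdot)$ in $\om \setminus \overline{B(A_r, r/(4M))}$, verifying non-positive boundary values (trivially on $\pom$, via Harnack on $\partial B(A_r, r/(4M)) \cap \om$, and via the preliminary Carleson/Harnack bound combined with the Bourgain lower bound on $h$ at suitable reference points), thereby concluding $G(X_0, Y) \le C\, G(X_0, A_r)\, h(Y)$ throughout. Converting this into an $L$-statement via the adjoint identity $G_L(X_0, A_r) = G_{L^T}(A_r, X_0)$ and invoking a doubling argument for $\hm^{X_0}$ (itself a consequence of the NTA structure and Harnack chains) produces $G(X_0, A_r) \le C\, r^{1-n}\, \hm^{X_0}(\Delta_r)$.

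The main obstacle is the lower bound. In Lipschitz (or smoother) domains, the pointwise identification $d\hm^{X_0}/d\sigma(y) \sim \partial_\nu G(X_0, y)$ reduces the problem to standard PDE estimates; in the chord arc setting this identification is unavailable (the Poisson kernel may not even exist as a function with respect to $\sigma$). One must therefore carry out all comparisons via weak maximum principles and the $L$/$L^T$ duality of the Green function, calibrating the scales carefully so that $X_0$, $A_r$ and $\Delta_r$ all sit in ``good'' positions relative to the comparison regions---exactly the configuration that the separation hypothesis $X_0 \in \om \setminus B(x, 4r)$ ensures.
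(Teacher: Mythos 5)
The paper states this lemma without proof, citing \cite{CFMS,JK-NTA}, so the comparison is against the classical argument. Your upper bound is sound and standard: test the Riesz representation formula against a cutoff adapted to $B(x,3r/2)$, use Cauchy--Schwarz and Caccioppoli to pass to an $L^2$ bound on $G(X_0,\cdot)$, and then invoke the Carleson estimate (for $L^T$) to reach $\hm^{X_0}(B(x,r))\lesssim r^{n-1}G(X_0,\A(x,r))$.

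The lower-bound sketch, however, has a genuine gap. You apply the maximum principle to the function $Y\mapsto G(X_0,Y)-C\,G(X_0,\A(x,r))\,h(Y)$ over $\om\setminus\overline{B(\A(x,r),r/(4M))}$, but this region contains the pole $Y=X_0$ of $Y\mapsto G(X_0,Y)$, where the would-be subsolution blows up to $+\infty$; the comparison cannot close there. Moreover, even granting the conclusion $G(X_0,Y)\le C\,G(X_0,\A(x,r))\,h(Y)$, this is just another upper bound on $G(X_0,\cdot)$ against $G(X_0,\A(x,r))$ (it degenerates to a tautology at $Y=\A(x,r)$), and the auxiliary $L^T$-elliptic measure $h$ never connects back to $\hm^{X_0}$, so it does not produce $G(X_0,\A(x,r))\lesssim r^{1-n}\hm^{X_0}(B(x,r))$. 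The classical fix is to compare in the \emph{first} argument instead: with $\rho\approx r$ a fraction of the corkscrew radius, both $X\mapsto G(X,\A(x,r))$ and $X\mapsto \hm^X(\Delta(x,2r))$ are non-negative $L$-solutions on $\om\setminus\overline{B(\A(x,r),\rho)}$ (the pole $\A(x,r)$ is now excised); on $\partial B(\A(x,r),\rho)\cap\om$ one has the fundamental-solution bound $G(X,\A(x,r))\lesssim\rho^{1-n}$ and the Bourgain--Harnack lower bound $\hm^X(\Delta(x,2r))\gtrsim 1$, while on $\pom$ the Green function vanishes. The maximum principle then gives $G(X,\A(x,r))\lesssim\rho^{1-n}\hm^X(\Delta(x,2r))$ for all $X\in\om\setminus B(\A(x,r),\rho)$; evaluating at $X=X_0$ and invoking the doubling of $\hm^{X_0}$ (Lemma~\ref{hmdoubling.lem}) completes the proof. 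The $L^T$ structure is only needed on the upper-bound side, where one views $G(X_0,\cdot)$ as an $L^T$-solution in the second variable.
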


\begin{lemma}[Doubling for elliptic measure]\label{hmdoubling.lem}
Let $\om \subset \ree$ be a chord arc domain, $x \in \pom$ and $10r \in (0,\diam \pom)$. If $X_0 \in \om \setminus B(x,4r)$
\[\hm^{X_0}(B(x,2r)) \le C\hm^{X_0}(B(x,r)),\]
where $C$ depends on $n$, $\Lambda$ and the chord arc constants for $\om$.
\end{lemma}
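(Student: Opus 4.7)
The plan is to deduce the doubling estimate from the CFMS identity (Lemma \ref{CFMS.lem}) together with the Harnack chain condition, splitting into two regimes according to how close the pole $X_0$ is to $B(x,2r)$.

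\emph{Main regime: $X_0 \in \om \setminus B(x, 8r)$.} In this case Lemma \ref{CFMS.lem} applies at both scales $r$ and $2r$, giving
\[
\hm^{X_0}(B(x,r)) \approx r^{n-1}\,G(X_0, \A(x,r)), \qquad
\hm^{X_0}(B(x,2r)) \approx (2r)^{n-1}\,G(X_0, \A(x,2r)),
\]
with implicit constants depending only on $n$, $\Lambda$, and the chord arc constants. It therefore suffices to show $G(X_0, \A(x,2r)) \lesssim G(X_0, \A(x,r))$. Both corkscrew points lie in $B(x,2r) \cap \om$ with distance to $\pom$ comparable to $r$, so the Harnack chain condition produces a chain of at most $N$ balls (with $N$ depending only on the chord arc constants) joining $\A(x,r)$ to $\A(x,2r)$, each of diameter comparable to its distance from $\pom$. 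These balls can be arranged to lie inside $B(x, Cr)$ for an absolute $C < 8$, hence stay uniformly away from $X_0$. Applying Harnack's inequality iteratively to the nonnegative $L^T$-solution $Y \mapsto G(X_0,Y)$ (which is $L^T$-harmonic away from $X_0$) along this chain yields $G(X_0, \A(x,2r)) \le C'\,G(X_0, \A(x,r))$, and combining with the two CFMS estimates completes this case.

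\emph{Close-pole regime: $X_0 \in B(x, 8r) \setminus B(x, 4r)$.} Here CFMS is not directly available at scale $2r$, but the pole itself is close enough to the boundary that Bourgain's estimate gives the bound for free. Indeed, $\A(x,r)$ and $X_0$ can be joined by a Harnack chain of length controlled by the chord arc constants that avoids $B(x, r/4)$ (both points have distance to $\pom$ comparable to $r$ or more), so the function $Y \mapsto \hm^{Y}(B(x,r))$, which is a nonnegative $L^T$-solution on $\om \setminus \overline{B(x,r/2)}$, satisfies
\[
\hm^{X_0}(B(x,r)) \gtrsim \hm^{\A(x,r)}(B(x,r)) \gtrsim 1
\]
by Lemma \ref{Bourgain.lem}. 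Since $\hm^{X_0}(B(x,2r)) \le \hm^{X_0}(\pom) \le 1$, the doubling bound holds trivially in this regime.

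The main obstacle, such as it is, is purely geometric bookkeeping: one must verify that the Harnack chain joining $\A(x,r)$ to $\A(x,2r)$ (respectively to $X_0$) can be routed to stay uniformly away from the singular pole and that its length is controlled solely by the chord arc constants, so that all implicit constants depend only on $n$, $\Lambda$, and the chord arc constants. There are no analytic subtleties beyond the tools already cited (CFMS, Bourgain, Harnack chains, and the maximum principle).
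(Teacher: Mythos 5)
Your main-regime case ($X_0 \in \om \setminus B(x,8r)$) follows the standard route: CFMS at scales $r$ and $2r$ reduces matters to comparing $G(X_0, \A(x,r))$ with $G(X_0, \A(x,2r))$, which a Harnack chain of bounded length handles. Modulo the standard (but worth citing) fact that in an NTA domain the chain may be routed to stay within a fixed dilate $B(x,Cr)$, and hence away from the singularity at $X_0$, this part is sound.

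The close-pole case, however, contains a genuine gap. Your key assertion is that ``both points have distance to $\pom$ comparable to $r$ or more,'' but the hypothesis $X_0 \in B(x,8r) \setminus B(x,4r)$ imposes \emph{no} lower bound on $\dist(X_0, \pom)$. The pole $X_0$ can sit arbitrarily close to a boundary point $y_0$ with $|y_0 - x| \approx 5r$; as $\dist(X_0,\pom) \to 0$, $\hm^{X_0}$ concentrates at $y_0$ and $\hm^{X_0}(B(x,r)) \to 0$, so the claimed lower bound $\hm^{X_0}(B(x,r)) \gtrsim 1$ is false. Lemma \ref{Bourgain.lem} only applies to corkscrew-type poles, and the Harnack chain you propose from $\A(x,r)$ to $X_0$ has length growing like $\log(r/\dist(X_0,\pom))$, which is unbounded, so it yields no uniform constant. (There is also a minor slip: $Y \mapsto \hm^{Y}(B(x,r))$ is an $L$-solution, not an $L^T$-solution.)

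The doubling bound does hold for such $X_0$, but it requires a pole-change rather than a direct Harnack comparison. One fix: choose a reference pole $X_1$ with $X_1 \notin B(x,8r)$ and $\dist(X_1,\pom) \gtrsim r$ (e.g.\ a corkscrew at a larger scale, assuming $\diam\pom$ allows). Both $G(X_0,\cdot)$ and $G(X_1,\cdot)$ are nonnegative $L^T$-solutions in $B(x,4r) \cap \om$ vanishing on $B(x,4r) \cap \pom$, so the comparison principle (Lemma~\ref{comparisonprinc.lem}, at scale $2r$) shows $G(X_0,\cdot)/G(X_1,\cdot)$ is comparable to a single constant throughout $B(x,2r) \cap \om$. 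Applying Lemma~\ref{CFMS.lem} at arbitrarily small scales around $\hm^{X_1}$-a.e.\ $z \in \Delta(x,2r)$ (legitimate for both poles), this gives that $d\hm^{X_0}/d\hm^{X_1}$ is bounded above and below on $\Delta(x,2r)$ by comparable constants, whence $\hm^{X_0}(B(x,2r))/\hm^{X_0}(B(x,r)) \approx \hm^{X_1}(B(x,2r))/\hm^{X_1}(B(x,r))$, and the right-hand side is controlled by your main-regime argument. Alternatively, one can re-examine the proof of CFMS and observe that the factor $4$ in Lemma~\ref{CFMS.lem} is not sharp, so that it already applies at scale $2r$ for all $X_0 \notin B(x,4r)$, after which the main-regime argument closes the gap directly.
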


\begin{lemma}[Comparison principle]\label{comparisonprinc.lem}
Let $\om \subset \ree$ be a chord arc domain, $x \in \pom$ and $10r \in (0,\diam \pom)$. If $Lu =Lv = 0$, $u, v \ge 0$ in $\om \cap B(x,2r)$, $u$ and $v$ are non-trivial functions which vanish continuously on $B(x, 2r) \cap \pom$ then
\[ 
\frac{u(X)}{v(X)} \approx \frac{u(\A(x,r))}{v(\A(x,r))}, \quad \forall X \in B(x,r) \cap \om,
\]
where the implicit constants depend on $n$, $\Lambda$ and the chord arc constants for $\om$. 
\end{lemma}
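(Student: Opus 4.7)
The result is the classical boundary comparison principle (``boundary Harnack inequality'') for nonnegative solutions vanishing on a shared boundary portion, proved for harmonic functions in \cite{JK-NTA} and extended to real divergence-form elliptic operators in \cite{CFMS}; the plan is to give a proof along these classical lines using only the tools already assembled above. By exchanging the roles of $u$ and $v$ it suffices to establish the upper bound $u(X)/v(X) \lesssim u(\A(x,r))/v(\A(x,r))$, and after rescaling $u$ and $v$ I may assume $u(\A(x,r)) = v(\A(x,r)) = 1$, so that the goal becomes $u(X) \lesssim v(X)$ for all $X \in B(x,r) \cap \om$.

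First I will dispatch the interior region: if $\dist(X, \pom) \gtrsim r$, a Harnack chain of uniformly bounded length links $X$ to $\A(x,r)$, so the Harnack inequality gives $v(X) \gtrsim 1$, while the Carleson estimate (Lemma \ref{cscarlesonest.lem}) yields $u(X) \lesssim 1$. For $X$ close to the boundary, with $\delta := \dist(X, \pom) \ll r$ and $y \in \pom$ a nearest point, the Carleson estimate at scale $\delta$ around $y$ together with a short Harnack chain between $X$ and $\A(y,\delta)$ (both at depth $\approx \delta$) reduces the problem to showing $u(\A(y,\delta))/v(\A(y,\delta)) \lesssim 1$ uniformly for $y \in \pom \cap B(x,2r)$ and $\delta \le r$.

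The heart of the argument is a representation formula. Setting $D := \om \cap B(x,2r)$, which is chord arc with comparable constants, and letting $\hm_D^X$ denote the $L$-elliptic measure of $D$ with pole at $X$, I claim
\[
u(X) \approx u(\A(x,r))\cdot \hm_D^X(\pom \cap B(x,r)), \qquad v(X) \approx v(\A(x,r))\cdot \hm_D^X(\pom \cap B(x,r))
\]
for $X$ in a suitable subregion; taking the ratio then immediately yields the conclusion. The upper bound follows from the maximum principle on $D$: since $u$ is controlled on $\partial B(x,2r) \cap \om$ by Carleson and vanishes on $\pom \cap B(x,2r)$, the $L$-solution representation on $D$ majorizes $u(X)/u(\A(x,r))$ by a constant multiple of $\hm_D^X(\pom\cap B(x,r))$. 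The matching lower bound combines Bourgain's estimate (Lemma \ref{Bourgain.lem}) and the CFMS estimate (Lemma \ref{CFMS.lem}), together with doubling of the elliptic measure (Lemma \ref{hmdoubling.lem}), to transfer the comparability from scale $r$ down to smaller scales.

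The hard part will be this lower bound $u(X) \gtrsim u(\A(x,r))\hm_D^X(\pom\cap B(x,r))$: as $X$ approaches a boundary point $y$, both $u(X)$ and $\hm_D^X(\pom \cap B(x,r))$ tend to zero, and one must verify that they vanish at comparable rates rather than the ratio degenerating. I plan to handle this by iterating along the dyadic scales $r, r/2, \ldots, \delta$ centered at $y$, using CFMS at each scale to couple the elliptic measure to the Green function, and the H\"older continuity at the boundary (Lemma \ref{HCatbdry.lem}) to absorb the small multiplicative error incurred per scale into a geometric series, thereby yielding a uniform bound independent of how close $X$ sits to $\pom$.
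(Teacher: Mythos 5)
The paper does not in fact prove this lemma: it is cited as a standard boundary Harnack/comparison principle, with the remark that the classical proofs for harmonic functions in \cite{JK-NTA,CFMS} carry over verbatim to real divergence-form operators. So your task was essentially to reprove a classical theorem from scratch, and your high-level scaffolding (reduce to one inequality by symmetry, normalize at the corkscrew, dispatch the interior region by Harnack chains plus the Carleson estimate, handle the near-boundary region via a maximum-principle representation in a truncated domain) does follow the CFMS/Jerison--Kenig outline.

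There are, however, two genuine problems. First, the representation formula you propose, $u(X) \approx u(\A(x,r))\,\hm_D^X(\pom \cap B(x,r))$ with $D = \om \cap B(x,2r)$, cannot be right: as $X \to y$ for an interior point $y$ of $\pom \cap B(x,r)$, the quantity $\hm_D^X(\pom \cap B(x,r))$ tends to $1$ while $u(X)$ tends to $0$, so the claimed comparability would force $u(\A(x,r)) = 0$. The correct object in the maximum-principle argument is the elliptic measure of the \emph{spherical cap} $\partial B(x,\rho) \cap \om$ in the truncated domain $\om \cap B(x,\rho)$ (with, say, $\rho = 3r/2$), since $u$ vanishes on the flat portion of $\partial D$ and is only controlled on the spherical portion. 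Second, your treatment of the lower bound --- ``iterate dyadic scales centered at $y$ and absorb small multiplicative errors into a geometric series'' --- does not describe how the proof actually closes: per-scale errors in this setting are constant-size, not summable. The crux of the classical argument is a single-scale barrier/covering estimate (the ``box lemma'' of CFMS, cf.\ also \cite[Lemma 4.8]{JK-NTA} in the NTA setting), which shows that the elliptic measure of the near-boundary part of the cap, $\{Y \in \partial B(x,\rho) \cap \om : \dist(Y,\pom) \le \eta r\}$, is controlled by that of the interior part $\{\dist(\cdot,\pom) > \eta r\}$, using H\"older continuity at the boundary and the exterior corkscrew. Without that estimate your sketch does not establish the lower bound, and dyadic telescoping will not substitute for it.
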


\begin{lemma}[Quotients of non-negative solutions]\label{QHC.lem}
Let $\om \subset \ree$ be a chord arc domain, $x \in \pom$ and $r \in (0,\diam \pom/10)$. If $Lu =Lv = 0$, $u, v \ge 0$ in $\om \cap B(x,2r)$, and $u$ and $v$ vanish continuously on $B(x, 2r) \cap \pom$, then $u/v$ is H\"older continuous of order $\mu = \mu(n, \Lambda, chord \ arc \ constants)$ in $\overline{B(x,r) \cap \om}$. In particular, $\lim_{Y \to y} (u/v)(Y)$ exists\footnote{Here the limit is taken within $\om$.} and, moreover,
\begin{equation}\label{QHC.eq}
\left| \frac{u(X)}{v(X)} -  \frac{u(\A(x,r))}{v(\A(x,r))} \right| \le C\left(\frac{|X - x|}{r}\right)^\mu\frac{u(\A(x,r))}{v(\A(x,r))}, \quad \forall X, Y \in B(x,r) \cap \om,
\end{equation}
where the constant $C >0$ and $\mu \in (0,1)$ depend on $n$, $\Lambda$ and the chord arc constants for $\om$.
\end{lemma}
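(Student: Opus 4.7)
The plan is to prove the lemma by a scale-invariant oscillation-decay argument for $f := u/v$ centered at the boundary point $x$, in the spirit of Jerison–Kenig~\cite{JK-NTA}. Concretely, writing $\osc_E f := \sup_E f - \inf_E f$, I will establish that there exists $\theta \in (0,1)$ depending only on the allowable constants such that
\begin{equation*}
\osc_{B(x,\rho/2) \cap \om} f \le \theta\, \osc_{B(x,\rho) \cap \om} f, \qquad 0 < \rho \le r.
\end{equation*}
A standard dyadic iteration then yields $\osc_{B(x,\rho) \cap \om} f \le C(\rho/r)^\mu \osc_{B(x,r) \cap \om} f$ with $\mu := \log_2(1/\theta) > 0$; combining this with the comparison principle bound $\osc_{B(x,r)\cap\om} f \lesssim u(\A(x,r))/v(\A(x,r))$ (Lemma~\ref{comparisonprinc.lem}) yields \eqref{QHC.eq}, and the resulting H\"older continuity up to the boundary gives the existence of $\lim_{Y\to y} (u/v)(Y)$ for $y \in \pom \cap B(x,r)$.

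For the oscillation decay, fix $\rho \le r$ and set $M := \sup_{B(x,\rho)\cap\om} f$, $m := \inf_{B(x,\rho)\cap\om} f$. Consider the two auxiliary functions
\[ w_1 := Mv - u, \qquad w_2 := u - mv. \]
By construction $w_1, w_2 \ge 0$ on $B(x,\rho)\cap\om$, both solve $L w_i = 0$ there by linearity, both vanish continuously on $B(x,\rho)\cap\pom$, and $w_1 + w_2 = (M-m)v$. Evaluating at the corkscrew point $X_\rho := \A(x,\rho/2)$, at least one of $w_1(X_\rho), w_2(X_\rho)$ must satisfy $w_i(X_\rho) \ge \tfrac12 (M-m) v(X_\rho)$; without loss of generality assume it is $w_1$. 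Applying the comparison principle (Lemma~\ref{comparisonprinc.lem}) to the pair $(w_1, v)$ on $B(x,\rho)\cap\om$ gives
\[ \frac{w_1(X)}{v(X)} \ge c\, \frac{w_1(X_\rho)}{v(X_\rho)} \ge \frac{c}{2}(M-m), \qquad X \in B(x,\rho/2)\cap\om, \]
for some $c > 0$ depending only on the allowable constants. Rearranging yields $f(X) \le M - (c/2)(M-m)$ on $B(x,\rho/2)\cap\om$, while trivially $f \ge m$ there; hence $\osc_{B(x,\rho/2)\cap\om} f \le (1 - c/2)(M-m)$, the desired decay with $\theta := 1 - c/2$.

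The principal content of the proof is the oscillation-decay step above; once it is established, the rest is routine Campanato-type iteration paired with the comparison principle. No machinery beyond the tools in Section~\ref{CADs.sect} is required, and the scale-invariance of the chord arc hypotheses ensures that the comparison principle applies with uniform constants at every scale $\rho \le r$. The main delicate point to verify is the legitimacy of invoking Lemma~\ref{comparisonprinc.lem} for the pair $(w_1, v)$ at each scale (i.e., non-triviality of $w_1$, which, if it failed, would force $f \equiv M$ on $B(x,\rho)\cap\om$ and make the claim vacuous at that scale).
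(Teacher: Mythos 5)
The paper does not prove Lemma~\ref{QHC.lem}: it is one of the standard chord-arc-domain estimates collected in Section~\ref{CADs.sect}, all stated without proof with a general pointer to \cite{CFMS, KKoPT, JK-NTA}, so there is no in-paper proof to compare against. Your argument is the classical Jerison--Kenig oscillation-decay proof for ratios of nonnegative null-solutions, and it is essentially correct: the linear combinations $w_1 = Mv - u$, $w_2 = u - mv$ are nonnegative null-solutions vanishing at the boundary, the comparison principle (Lemma~\ref{comparisonprinc.lem}) pushes a positive lower bound from the corkscrew point to all of $B(x,\rho/2)\cap\om$, and this yields a fixed multiplicative decay of the oscillation of $u/v$; dyadic iteration then gives the boundary H\"older modulus, and the size factor $u(\A(x,r))/v(\A(x,r))$ comes from one more application of Lemma~\ref{comparisonprinc.lem}. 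The non-triviality caveat you flag is handled correctly.

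Two points to tighten. First, your decay is centered at the single boundary point $x$, and by itself it gives, writing $f := u/v$, the bound $|f(X) - L(x)| \le C(|X-x|/r)^\mu\, u(\A(x,r))/v(\A(x,r))$ with $L(x) := \lim_{Y\to x} f(Y)$; this is not literally \eqref{QHC.eq}. In fact \eqref{QHC.eq} as printed (comparing $f(X)$ to $f(\A(x,r))$ with modulus $|X-x|$) cannot be right: letting $X \to x$ would force $L(x) = f(\A(x,r))$ identically, which fails in general. The dangling quantifier ``$\forall X, Y$'' with $Y$ nowhere in the inequality, and the way the lemma is invoked later in the proof of Lemma~\ref{sharptransferPK.lem}, indicate the intended statement is the two-point form $|f(X) - f(Y)| \le C\left(|X-Y|/r\right)^\mu u(\A(x,r))/v(\A(x,r))$. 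Second, that two-point form requires the oscillation decay at \emph{every} boundary point of $B(x,r)\cap\pom$, combined with interior H\"older estimates for $f$ and patched in the usual Campanato fashion; you gesture at this (``routine Campanato-type iteration''), but the sentence claiming that the $x$-centered decay plus the comparison-principle bound ``yields \eqref{QHC.eq}'' slightly overstates what the single-center decay by itself delivers.
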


Next we define the kernel function. It can be more generally defined (for any $X_0, X_1 \in \om$), but we use it only in this specific manner.

\begin{lemma}[Kernel function]\label{Kernelcompare.lem}
Let $\om \subset \ree$ be a chord arc domain, $x \in \pom$ and $10r \in (0,\diam \pom)$. If $X_0 \in \om \setminus B(x,4r)$ and $X_1 \in B(x,2r) \setminus B(x,r)$ we define for $z \in \pom$
\[H(z) := H(X_0,X_1, z) : = \frac{d\hm^{X_0}}{d\hm^{X_1}}(z).\]
The kernel function has the estimate
\[|H(z)/H(y) - 1| \le C\zeta^{\mu}, \quad \forall z,y: |z -x|, |y-x| < \zeta r,\]
for all $\zeta \in (0, 1/2)$, where $C > 0$ and $\mu \in (0,1)$ depend on $n$, $\Lambda$ and the chord arc constants for $\om$.
\end{lemma}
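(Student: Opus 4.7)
The strategy is to realize $H$ as a boundary limit of a ratio of Green functions and then invoke Lemma~\ref{QHC.lem}. Set $u(Y) := G(X_0, Y)$ and $v(Y) := G(X_1, Y)$; both are non-negative, vanish continuously on $\pom$, and satisfy the adjoint equation $L^T w = 0$ away from their respective poles. Since $|X_0 - x| \ge 4r$ and $|X_1 - x| \ge r$, neither pole lies in $B(x, r)$, so $u$ and $v$ are admissible as a pair of non-negative solutions in $B(x, r) \cap \om$ vanishing on the boundary. Applying Lemma~\ref{QHC.lem} with $2r_0 = r$ gives, for every $X \in B(x, r/2) \cap \om$,
\[
\left| \frac{u(X)}{v(X)} - H_0 \right| \le C \left( \frac{|X - x|}{r/2} \right)^\mu H_0, \qquad H_0 := \frac{u(\A(x, r/2))}{v(\A(x, r/2))}.
\]

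Next I would identify $H(z) = \lim_{Y \to z,\, Y \in \om} u(Y)/v(Y)$ for every $z \in \pom \cap B(x, r/2)$. The CFMS estimate (Lemma~\ref{CFMS.lem}) applied to each measure at scale $\rho \to 0$ about $z$, combined with the H\"older estimate above, yields
\[
\frac{\hm^{X_0}(B(z, \rho))}{\hm^{X_1}(B(z, \rho))} \approx \frac{u(\A(z, \rho))}{v(\A(z, \rho))} \xrightarrow[\rho \to 0]{} \lim_{Y \to z} \frac{u(Y)}{v(Y)},
\]
and this boundary limit is continuous in $z$. The doubling property for $\hm^{X_1}$ (Lemma~\ref{hmdoubling.lem}) together with the Lebesgue differentiation theorem shows the left-hand side converges to $H(z)$ for $\hm^{X_1}$-a.e.\ $z$; since the right-hand side is continuous we may redefine $H$ on a null set so that the identification holds pointwise throughout the surface neighborhood. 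This is the standard convention for kernel functions and is harmless when comparing ratios.

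Finally, for $|z - x|, |y - x| < \zeta r < r/2$, passing to the limit $X \to z$ and $X \to y$ in the QHC inequality yields
\[
|H(z) - H_0| + |H(y) - H_0| \le C (2\zeta)^\mu H_0.
\]
The comparison principle (Lemma~\ref{comparisonprinc.lem}) applied to the ratio $u/v$ on $B(x, r/2) \cap \om$ gives $H(y) \approx H_0$ with constants independent of $\zeta \in (0, 1/2)$, so dividing through yields $|H(z)/H(y) - 1| \le C \zeta^\mu$ after adjusting constants. The only mildly subtle point is the pointwise (rather than only $\hm^{X_1}$-a.e.) identification of $H$ with $\lim u/v$ throughout the neighborhood; this is a standard consequence of the boundary H\"older continuity provided by Lemma~\ref{QHC.lem}. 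The remaining computation is a direct application of that refined boundary Harnack principle to the pair of Green functions.
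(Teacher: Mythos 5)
The statement in question is among the preliminary lemmas the paper states without proof (Section 2.2 explicitly gathers ``well-known results'' and cites \cite{JK-NTA,CFMS,HKM-Book}), so there is no in-paper proof to compare against. Your strategy---write $H$ as the boundary limit of the ratio of Green functions $G(X_0,\cdot)/G(X_1,\cdot)$ and apply the quantitative boundary H\"older continuity (Lemma~\ref{QHC.lem})---is the standard route. The application of Lemma~\ref{QHC.lem} and Lemma~\ref{comparisonprinc.lem} in the last paragraph, and the final division, are correct once the identification $H(z)=\lim_{Y\to z} G(X_0,Y)/G(X_1,Y)$ is in hand.

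The identification itself, however, is where you have a genuine gap. Lemma~\ref{CFMS.lem} gives only \emph{two-sided comparability} with absolute constants: $c_1\rho^{n-1}G(X_i,\A(z,\rho))\le \hm^{X_i}(B(z,\rho))\le c_2 \rho^{n-1}G(X_i,\A(z,\rho))$, and these constants do not improve as $\rho\to 0$. Combined with Lebesgue differentiation this yields $c_1/c_2 \le H(z)/\lim_{Y\to z}(u/v)(Y) \le c_2/c_1$ for a.e.\ $z$, i.e.\ \emph{comparability}, not equality. You explicitly acknowledge that the argument lands on ``$\approx$'' and then claim the identification ``holds pointwise throughout the surface neighborhood'' after a null-set modification---but comparability does not become equality on a null set. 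And the discrepancy is not ``harmless when comparing ratios'': if one only knows $H(z)=c(z)\,\tilde H(z)$ with $c(z)\in[c_1/c_2,c_2/c_1]$ fixed (not tending to $1$), then the bound $|\tilde H(z)/\tilde H(y)-1|\le C\zeta^\mu$ gives nothing sharper than $|H(z)/H(y)-1|\lesssim 1$, which is exactly the trivial comparison-principle bound the lemma is supposed to refine. So the refined estimate actually fails to follow from what you established.

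The identification $d\hm^{X_0}/d\hm^{X_1}(z)=\lim_{Y\to z}G(X_0,Y)/G(X_1,Y)$ ($\hm^{X_1}$-a.e.) is true and classical, but it requires a separate argument. One option is to cite it directly from \cite{JK-NTA} (and its elliptic analogues via \cite{CFMS}). Another is to reproduce the Riesz-formula/boundary-Caccioppoli argument the paper itself carries out in the proof of Lemma~\ref{sharptransferPK.lem}: test $\hm^{X_i}$ against a radial bump $\psi_r$, write the difference $\int\psi_r\,d\hm^{X_0}-g(y)\int\psi_r\,d\hm^{X_1}$ as an integral of $A^T\nabla\bigl[G(X_0,\cdot)-g(y)G(X_1,\cdot)\bigr]\cdot\nabla\psi_r$, and estimate it using Cauchy--Schwarz, boundary Caccioppoli, Lemma~\ref{QHC.lem}, and Lemma~\ref{CFMS.lem} to show it is $O(r^\mu)$ relative to $u_r^2(X)$. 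That is precisely what upgrades the soft CFMS comparability to an equality of the two a.e.\ limits; the rest of your argument then goes through.
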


Later, we will `localize' the coefficients of our operator, the following lemma is useful in this regard.
\begin{lemma}\label{agreeingops.lem}
Let $\om_i \subset \ree$, $i = 1,2$ be chord arc domains such that
\[ \Omega_1 \cap B(x,10r) = \Omega_2 \cap B(x,10r) \]
where $x \in \pom_i$ and $r \in (0,\diam \pom_i/20)$. 
Suppose further that $L_i = -\div A_i \nabla$ are two divergence form $\Lambda$-elliptic operators with $A_1 = A_2$ on $B(x,10r)$. Let $\hm_i^{X_0}$ is the $L_i$-elliptic measure for $\om_i$ with pole at $X_0\in \Omega_i \cap B(x,8r) \setminus B(x,4r)$.\footnote{This means the statements on $x, r$ an $X_0$ are to hold simultaneously for $i = 1, 2$}
Then $\hm_1^{X_0}|_{B(x,r)}$ and $\hm_2^{X_0}|_{B(x,r)}$ are mutually absolutely continuous. In particular, if $\hm_1^{X_0}|_{B(x,r)}$ and $\HH^n|_{\partial\Omega_1 \cap B(x,r)} $ are mutually absolutely continuous then so are $\hm_2^{X_0}|_{B(x,r)}$ and $\HH^n|_{\partial\Omega_2 \cap B(x,r)} $.

Moreover,
\[\frac{d\hm_1^{X_0}}{d\hm_2^{X_0}}(y) \approx 1, \quad \hm_2^{X_0}-a.e.\ y \in B(x,r/2)\cap\pom,\]
where the implicit constants depend on $n$, $\Lambda$ and the chord arc constants for $\om_i$, $i = 1, 2$.

\end{lemma}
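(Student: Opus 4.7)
The plan is to pass through the Green functions and invoke the comparison principle on the common region. Since $A_1 = A_2$ on $B(x, 10r)$, the transpose operators agree there: $L_1^T = L_2^T =: L^T$. Writing $\tilde G_i(Y) := G_i(X_0, Y)$ and noting that $X_0 \notin B(x, 4r)$, each $\tilde G_i$ is a positive solution of $L^T u = 0$ in $\Omega_i \cap B(x, 4r) = \Omega \cap B(x, 4r)$, vanishing continuously on $\partial\Omega_i \cap B(x, 4r) = \partial\Omega \cap B(x, 4r)$ (since $\Omega_1 \cap B(x, 10r) = \Omega_2 \cap B(x, 10r)$, the function $\tilde G_2$ makes sense and has the right properties on the common region).

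Step one is to apply the comparison principle (Lemma \ref{comparisonprinc.lem}) to the pair $(\tilde G_1, \tilde G_2)$ in $\Omega_1 \cap B(x, 4r)$, getting
\[
\frac{\tilde G_1(Z)}{\tilde G_2(Z)} \approx \frac{\tilde G_1(\A(x, 2r))}{\tilde G_2(\A(x, 2r))} \quad \text{for all } Z \in B(x, 2r) \cap \Omega,
\]
and then to argue the right-hand ratio is itself $\approx 1$. The CFMS estimate (Lemma \ref{CFMS.lem}) rewrites each $\tilde G_i(\A(x, 2r))$ as $\approx r^{1-n}\,\omega_i^{X_0}(B(x, 2r))$. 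Bourgain's estimate (Lemma \ref{Bourgain.lem}) gives $\omega_i^{\A(x, 2r)}(B(x, 2r)) \gtrsim 1$, and a Harnack chain of bounded length linking $\A(x, 2r)$ to $X_0$ inside $\Omega_i$ upgrades this to $\omega_i^{X_0}(B(x, 2r)) \approx 1$ (the upper bound being trivial). Combining, $\tilde G_1 \approx \tilde G_2$ pointwise on $B(x, 2r) \cap \Omega$.

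Step two transfers this interior comparison to the boundary. For any surface ball $B(y, s)$ with $y \in \partial\Omega \cap B(x, r/2)$ and $s \le r/2$, CFMS gives $\omega_i^{X_0}(B(y, s)) \approx s^{n-1}\tilde G_i(\A(y, s))$, and since $\A(y, s) \in B(x, 2r) \cap \Omega$, step one yields $\omega_1^{X_0}(B(y, s)) \approx \omega_2^{X_0}(B(y, s))$. Doubling (Lemma \ref{hmdoubling.lem}) extends this comparability to all surface balls contained in $B(x, r)$. Mutual absolute continuity of $\omega_1^{X_0}|_{B(x, r)}$ and $\omega_2^{X_0}|_{B(x, r)}$, and the pointwise density bound $\tfrac{d\omega_1^{X_0}}{d\omega_2^{X_0}} \approx 1$ on $B(x, r/2) \cap \pom$, then follow via Lebesgue differentiation for the doubling measures $\omega_i^{X_0}$ on the Ahlfors regular common boundary; the absolute continuity of $\omega_2^{X_0}$ against surface measure transfers trivially from that of $\omega_1^{X_0}$ by transitivity.

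The main obstacle is ensuring that the constants in all the comparisons depend only on the chord arc data and not on which domain is in play. The critical point is the Harnack chain from $X_0$ to $\A(x, 2r)$ inside $\Omega_i$: both points lie in $\Omega_i \cap B(x, 10r)$ with boundary distance $\gtrsim r$ and mutual distance $\lesssim r$, so the NTA property supplies such a chain with length controlled purely by the chord arc data, and Harnack for the $L_i$-harmonic function $Y \mapsto \omega_i^Y(B(x, 2r))$ finishes the job on each side separately with comparable constants. Beyond this one must verify that each invocation of a CAD boundary estimate is applied inside $B(x, 10r)$ where $A_1 = A_2$ and where the two boundaries and interiors genuinely coincide; this is automatic from the choice of radii above.
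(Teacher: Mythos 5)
Your proof takes essentially the same route as the paper's own sketch: both rely on viewing the Green functions $G_i(X_0,\cdot)$ as $L^T$-solutions, invoking the comparison principle on the common region, normalizing via the CFMS estimate together with Bourgain's lower bound and a Harnack chain, and then transferring to surface balls via CFMS and doubling, finishing with Lebesgue differentiation. The ordering of steps is slightly different (you compare the Green functions first and then normalize, whereas the paper relates measures to Green functions first), but the content is identical. Note only that when you assert $X_0$ has boundary distance $\gtrsim r$, that is not stated in the hypotheses (the lemma only assumes $X_0 \in \Omega_i \cap B(x,8r)\setminus B(x,4r)$); the paper's sketch glosses over the same point, and the lemma is in fact only ever invoked with well-separated poles, so this is at most a shared infelicity rather than a gap unique to your argument.
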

\begin{proof}[Sketch of the proof] Let $y \in B(x,r)$ and $s \in (0, r)$. Then the CFMS estimate (Lemma \ref{CFMS.lem}) implies that
\begin{equation}\label{tmp:hmGreen}
	\frac{\hm^{X_0}_1(B(y,s))}{\hm^{X_0}_2(B(y,s))} \approx \frac{G_1(X_0, \A(y,s))}{G_2(X_0, \A(y,s))},
\end{equation} 
where $G_i(X, Y)$ is the $L_i$-Green function for $\om_i$. 
On the other hand, Bourgain's estimate (Lemma \ref{Bourgain.lem}) and the CFMS estimate imply that 
\[G_i(X_0, \A(x,r/50))r^{n-1} \approx \hm^{X_0}_i(B(x,r/50) \approx 1, \quad i = 1, 2.\]
By \eqref{tmp:hmGreen} and the comparison principle\footnote{Here we need to view $G_i(X_0, Y)$ as a $L_i^T = -\div A_i^T \nabla$ null solution away from $X_0$. This follows by the fact that if $G(X,Y)$ is the $L$-Green function then $G_T(X,Y) = G(Y,X)$ is the $L^T$-Green function. We then apply the comparison principle with $L^T$ in place of $L$.}   (Lemma \ref{comparisonprinc.lem}) then shows 
\[\frac{\hm_1^{X_0}(B(y,s))}{\hm_2^{X_0}(B(y,s))} \approx \frac{G_1(X_0, \A(y,s))}{G_1(X_0, \A(y,s))} \approx \frac{G_1(X_0, \A(x,r/50))}{G_2(X_0, \A(x,r/50))} \approx 1,\]
where the constants depend on $n$, $\Lambda$ and the chord arc constants for $\om_i$, $i = 1, 2$.
From this estimate one can deduce all of the properties described in the lemma.
\end{proof}

\subsection{Chord arc domains with small constants}\label{sec:sCAD}
We study asymptotic flatness in the form of vanishing chord arc domains. To define these domains, we need to give a few more preliminary definitions.

\begin{definition}[Separation property]
Let $\om \subset \ree$ be a domain. We say that $\om$ has the {\bf separation property} if for each compact set $K \subset \ree$ there exists $R > 0$ such that for any $x_0 \in \pom \cap K$ and $r\in (0,R]$ there exists a vector $\vec{n} \in \mathbb{S}^n$ such that
\[\{X\in B(x_0, r): \langle X - x_0, \vec n \rangle  \ge r/4\}  \subset \om \]
and 
\[\{X\in B(x_0, r): \langle X - x_0, \vec n \rangle  \le  -r/4\} \subset \om^c. \]
Notice that $\vec n$ points `inward'. 
\end{definition}
\begin{remark}
	Provided that $\partial\Omega $	is $\delta$-Reifenberg flat for $\delta$ sufficiently small, there is an equivalent definition of the separation property, see \cite[Remark 1.1]{KT-Annals}.
\end{remark}

In the sequel, $D[\cdot; \cdot]$ will be used to denote the Hausdorff distance between two sets, that is, for $A, B \subset \ree$
\[D[A;B]: = \sup\{\dist(a, B): a \in A\} + \sup\{\dist(b,A): b \in B\}.\]

\begin{definition}[Reifenberg flatness] Given a closed set $\Sigma \in \ree$, $x_0 \in \Sigma$ and $r > 0$, we define
\[\Theta(x_0, r) := \inf_{P}\tfrac{1}{r}D[\Sigma \cap B(x_0, r);P \cap B(x_0, r)],\]
where the infimum is taken over all $n$-planes $P$ through $x_0$
and
\[\Theta(r) := \sup_{x_0 \in \Sigma} \Theta(x_0, r).\]
For $R > 0$ and $\delta \in (0, \delta_n]$, where $\delta_n$ is sufficiently small depending only on the dimension (see Remark \ref{flatvac.rmk}) we say $\Sigma$ is $(\delta, R)$-Reifenberg flat if 
\[\sup_{r \in (0, R)} \Theta(r) < \delta.\]
\end{definition}
\begin{remark}\label{flatvac.rmk}
The $\delta_n$ above is to ensure both that the definition is not vacuous and that if a domain has Reifenberg flat boundary and satisfies the separation property then it is, in fact, an NTA domain. See \cite[Lemma 3.1]{KT-Duke} and Appendix A in \cite{KT-Lens}.
\end{remark}

\begin{definition}[$(\delta, R)$-chord arc domains]\label{deltaVCAD.def}
Let $\om \subset \ree$ be a domain. Given $\delta \in (0, \delta_n]$ and $R > 0$, we say $\om$ is a {\bf $(\delta, R)$-chord arc domain} if 
\begin{enumerate}
\item $\om$ has the separation property (with parameter $R$),
\item $\pom$ is $(\delta, R)$-Reifenberg flat and
\item $\HH^n(B(x, r) \cap \pom) \le (1 + \delta)\omega_nr^n$, for all $x \in \pom$ and $r \in (0, R]$. 
\end{enumerate}
Here $\omega_n$ is the volume of the $n$-dimensional unit ball in $\rn$.

\end{definition}
\begin{remarks}\label{ADR.Deltarforallr.rmks} \ 
\begin{itemize}
	\item Since Hausdorff measure is non-increasing under projections, the $\delta$-Riefenberg flatness of the boundary in the above definition coupled with the upper bound on the surface measure gives the estimate
\[ (1 - \delta)\omega_nr^n \le \HH^n(B(x, r) \cap \pom)\le (1 + \delta)\omega_nr^n, \]
for any $x\in \pO$ and $r\in (0,R]$. See \cite[Remark 2.2]{KT-Duke}.
	\item There is an equivalent definition of $(\delta, R)$-chord arc domain, which is to replace the third assumption above by $\pO$ being Ahlfors regular (the Ahlfors regularity constant is not necessarily close to one) and the unit outer normal of $\Omega$ has BMO norm bounded above by $\delta$. See for example \cite[Definition 1.10]{KT-Lens}. These two definitions are equivalent (modulo constant) by \cite[Theorem 2.1]{KT-Duke} and \cite[Theorem 4.2]{KT-Annals} respectively. We choose the above definition in this paper for convenience.
\end{itemize}

\begin{definition}[Vanishing chord arc domains]\label{vcad-tt}
	We say $\om$ is a {\bf vanishing chord arc domain} 
\begin{itemize}
\item $\om$ is a $(\delta, R)$-chord arc domain for some $\delta \in (0, \delta_n]$ and $R > 0$, 
\item  $\limsup_{r \to 0^+} \Theta(r) = 0$ (where $\Sigma$ in the definition of $\Theta$ is $\pom$) and
\item $\lim_{r \to 0^+} \sup_{x \in \pom} \frac{\HH^n(B(x, r) \cap \pom)}{\hm_n r^n} = 1$.
\end{itemize}
\end{definition}
Note that if $\om$ is a vanishing chord arc domain then for every $\delta>0$ there exists $R_\delta > 0$ such that $\om$ is a $(\delta, R_\delta)$-chord arc domain.
\end{remarks}

To organize our arguments involving these types of domains, we introduce some notation. Given $x_0 \in \ree$ and $\vec{n} \in S^n$ we let $P(x_0, \vec n)$ be the plane through $x_0$ perpendicular to $\vec n$, that is,
\[P(x_0, \vec n) : = \{X \in \ree : \langle X - x_0, \vec n \rangle = 0\}; \]
and additionally, for $r > 0$ and $\xi \in \re$ we define the shifted cylinder
\begin{multline}\label{adcyl.eq}
\C(x_0, r, \vec n, \xi) := \bigg\{X \in \ree : |(X- x_0) - \langle X - x_0, \vec n \rangle \vec n| \le \frac{r}{\sqrt{n+1}}, \\
| \langle X - x_0, \vec n \rangle - \xi| \le  \frac{r}{\sqrt{n+1}}\bigg\},
\end{multline}
and if $\xi = 0$, we just write $\C(x_0, r, \vec n)$. 
Given $\delta>0$, we also define the following truncated cylinders
\[\C_\delta^+(x_0, r, \vec n) := \bigg\{X \in \C(x_0, r, \vec n):  \langle X - x_0, \vec n \rangle \ge 2\delta r\bigg\}\]
and
\[\C_\delta^-(x_0, r, \vec n) := \bigg\{X \in \C(x_0, r, \vec n):  \langle X - x_0, \vec n \rangle \ge - 2\delta r\bigg\}.\]
Notice that {\it both} $\C_\delta^+$ and $\C_\delta^-$ are `upper' portions of the cylinder $\C$. We also define the `strip'
\[\s_\delta(x_0, r, \vec n) := \bigg\{X \in \C(x_0, r, \vec n):  |\langle X - x_0, \vec n \rangle| \le 2\delta r\bigg\}. \]
We may omit the vector $\vec n$ in the above notation when there is no confusion.

Now we connect these objects to our definition of $(\delta, R)$-chord arc domain. Suppose $\om \subset \ree$ is a $(\delta, R)$-chord arc domain for some $\delta \in (0, \delta_n]$ and $R > 0$. Given $r \in (0,R]$ and $x_0 \in \pom$ we let $P_{x_0, r}$ be an $n$-plane such that 
\[D[\pom \cap B(x_0, r);P_{x_0, r} \cap B(x_0, r)] \le \delta r.\]
By the separation property and choice of $\delta_n$ sufficiently small we can ensure that for a choice of normal vector to the plane $P_{x_0, r}$, which we label $\vec n_{x_0,r}$, we have 
\[\C_\delta^+(x_0,r): = \C_\delta^+(x_0, r, \vec n_{x_0,r}) \subset \om\]
and
\[\C(x_0, r) \setminus C_\delta^-(x_0,r) \subset \om^c,\]
where $\C(x_0,r) : = \C(x_0, r, \vec n_{x_0,r})$ and $C_\delta^-(x_0,r) : = \C_\delta^-(x_0, r, \vec n_{x_0,r})$. (This means that $\vec n_{x_0,r}$ points {\it inwards}.)
We also define
\begin{equation}\label{def:Omtilde}
	\widetilde{\om}(x_0,r,\xi)= \om \cap \C(x_0, r, \vec n_{x_0,r},\xi)
\end{equation} 
for $\xi \in \re$ and drop $\xi$ from the notation when $\xi = 0$. 
We then note the following inclusions
\[\C_\delta^+(x_0,r) \subset \widetilde{\om}(x_0,r) \subset \C_\delta^-(x_0,r)\]
and
\[ \C(x_0, r) \cap \pom \subset \s_\delta(x_0, r),\]
where $\s_\delta(x_0, r): = \s_\delta(x_0, r, \vec n_{x_0,r})$. Note that, while $\widetilde{\om}(x_0,r)$ may not be an NTA or chord arc domain, the estimates established above hold in a smaller dilate of $\C(x_0,r)$ intersected with $\widetilde{\om}(x_0,r)$, when suitably interpreted (for instance, at points on the boundary of $\pom \cap \C(x_0,(1-c_n\delta)r)$). See \cite[Section 4]{KT-Duke}, in particular the discussion following \cite[Remark 4.2]{KT-Duke}.

Finally we recalls the definitions of $BMO$ and $VMO$ functions.
\begin{definition}[BMO and VMO]\label{BVMO.def}
Let $\mu$ be a Radon measure in $\mathbb{R}^n$. Then, for all $0 < r < \diam(\supp \mu)$ and all $f \in L^{2}(\mu)$ we define
\begin{equation} \label{e:tbmo}
\|f\|_{*}(x,r) = \sup_{0 < s < r} \left( \fint_{B(x,s)} \left| f_{\pom}(y) - \fint_{B(x,s)} f(z) \, d \mu(z) \right|^{2} \, d \mu(y)\right)^{\frac{1}{2}}.
\end{equation}
We say $f \in BMO(\mu)$ if
\begin{align} \label{e:bmonorm}
\nonumber \| f \|_{BMO(\mu)} := \sup_{0<r<\diam(\supp\mu)} \sup_{x \in \supp\mu} \| f ||_{*}(x,r) < +\infty.
\end{align}

We denote by VMO the closure of uniformly continuous functions on $\supp\mu$ in the BMO-norm. There is also a notion of VMO$_{\loc}$; $f \in$ VMO$_{\loc}$ if for every compact set $K \subset \mathbb{R}^{n}$,
\begin{equation} \label{e:vmoloc}
\lim_{r \to 0} \sup_{x \in \supp\mu \cap K} \left( \fint_{B(x,s)} \left| f(y) - \fint_{B(x,s)} f(z) \, d \mu(z) \right|^{2} \, d \mu(y) \right)^{\frac{1}{2}}=0.
\end{equation}
\end{definition}
\begin{remark}
	In this paper we will work with BMO and VMO functions with respect to the surface measure $\mathcal{H}^{n}|_{\partial\Omega}$, where $\Omega$ is a domain with Ahlfors regular boundary. 
\end{remark}

\section{Perturbations of constant coefficient operators and Poisson kernels}\label{sec:pert}
In this section, we restrict our attention to the upper half space $\ree_+$, and study the Poisson kernels for elliptic operators that are \textit{perturbations} of constant-coefficient operators. Let $A_0$ be a real, constant $(n+1)\times (n+1)$ matrix satisfying the $\Lambda$-ellipticity condition. We say a real $(n+1) \times (n+1)$ matrix satisfying the $\Lambda$-ellipticity condition is an \textbf{$\epsilon$-perturbation of $A_0$}, if $A(x,t)$ can be decomposed as
\begin{equation}\label{eq:decompA}
	A(x,t) =  A_1(x) + B(x,t)
\end{equation} 
with
\begin{equation}\label{def:pert}
	\|A_0 - A_1 \|_{L^\infty} + \|B(x,t)\|_{\C} \le \epsilon,
\end{equation}
see Definition \ref{Carnorm.def} for the definition of the Carleson norm $\| \cdot\|_{\C}$.

 The goal of this section is to prove the following result:

\begin{theorem}\label{UHSperturbfinal.thrm}
For any $\tilde \beta \in (0,1)$, there exist $\tilde\delta= \tilde\delta(\tilde\beta, n, \Lambda) > 0$ and $\epsilon = \epsilon(\tilde\beta, n, \Lambda)$ such that the following holds. 
Suppose $A=A(x,t)$ is a real matrix-valued function on $\ree_+$ satisfying the $\Lambda$-ellipticity condition and moreover, $A$ is an $\epsilon$-perturbation of a constant-coefficient matrix $A_0$ satisfying the $\Lambda$-ellipticity condition.
Then the Poisson kernel for $L= -\divg(A\nabla)$ in the upper half space, denoted by $k_A^X$, satisfies
\begin{equation}\label{eq:pertB2}
	1\leq \frac{\left(\fint_{\Delta(y, \delta'r)} \left[k_A^X(z)\right]^2 \, dz \right)^{1/2}}{\fint_{\Delta(y,\delta' r)} k_A^X(z) \, dz } < 1 + \tilde\beta
\end{equation} 
for all $y\in \rn$, $r > 0$, $\delta' < \tilde \delta $ and $X \in D(y,r)$. We recall that
\[ D(y,r) := B(y,10 r) \cap \left\{(x,t) \in \ree_+: t > \frac{r}{2} \right\} \]
is the upper cap of the ball $B(y,10 r)$.
\end{theorem}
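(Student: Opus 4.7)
The plan is to factor the perturbation $A_0 \rightsquigarrow A$ through the intermediate $t$-independent matrix $A_1$, apply a distinct perturbation theorem to each step, and then compare everything to the explicit Poisson kernel of the constant-coefficient operator $L_0 = -\div(A_0 \nabla)$. For $L_0$ in $\ree_+$, an affine change of variables depending only on $A_0$ reduces $k_{A_0}^X$ to the classical half-space Poisson kernel, which is real-analytic on $\rn$ away from the vertical projection of $X$. Consequently, uniformly over $y \in \rn$, $r > 0$, $X \in D(y,r)$ and over $A_0$ in the compact set of real constant $\Lambda$-elliptic matrices, the reverse H\"older quotient of $k_{A_0}^X$ over $\Delta(y,\delta' r)$ tends to $1$ as $\delta' \to 0^+$. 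I would fix once and for all a threshold $\tilde\delta_0 = \tilde\delta_0(\tilde\beta, n, \Lambda)$ so that this quotient is strictly less than $1 + \tilde\beta/2$ for every $\delta' < \tilde\delta_0$.

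For the first perturbation step $A_0 \rightsquigarrow A_1$, I would invoke the layer-potential and first-order Dirac operator calculus of \cite{AAAHK, AAH}: since $A_1$ is $t$-independent with $\|A_0 - A_1\|_{L^\infty}\leq \epsilon$, the holomorphic dependence of the functional calculus on the coefficient matrix yields, for $\epsilon$ small (depending on $n$ and $\Lambda$), a quantitative bound $\|\log(k_{A_1}^X/k_{A_0}^X)\|_{BMO}\lesssim \epsilon$ on all the scales relevant to $\Delta(y,r)$ and $X \in D(y,r)$. For the second step $A_1 \rightsquigarrow A = A_1 + B$, the summand $B$ is a Dahlberg--Fefferman--Kenig--Pipher perturbation with small Carleson norm, and the machinery in \cite{AA} produces the analogous small-BMO estimate $\|\log(k_A^X/k_{A_1}^X)\|_{BMO}\lesssim \|B\|_{\C}\leq \epsilon$. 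In each case the pole $X$ must be localized to $D(y,r)$, which is standard.

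Composing these two comparisons, the density $H := k_A^X/k_{A_0}^X$ satisfies $\|\log H\|_{BMO}\lesssim \epsilon$. John--Nirenberg followed by a short Cauchy--Schwarz-type computation then gives
\[
\frac{\left(\fint_{\Delta(y,\delta' r)} (k_A^X)^2\right)^{1/2}}{\fint_{\Delta(y,\delta' r)} k_A^X} \leq (1 + C\epsilon)\,\frac{\left(\fint_{\Delta(y,\delta' r)} (k_{A_0}^X)^2\right)^{1/2}}{\fint_{\Delta(y,\delta' r)} k_{A_0}^X}
\]
for all $\delta' < \tilde\delta_0$, and the right-hand side is strictly less than $(1+C\epsilon)(1+\tilde\beta/2)$ by the first paragraph. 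Choosing $\epsilon$ small enough that $(1+C\epsilon)(1+\tilde\beta/2) < 1+\tilde\beta$ and setting $\tilde\delta := \tilde\delta_0$ then finishes the argument.

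The hard part will be the \emph{small-constant} version of the perturbation estimates in the second paragraph: the standard results in \cite{AAH} and \cite{AA} tend to yield only qualitative $A_\infty$ of the elliptic measures, whereas here one needs $\|\log(k_{A_1}^X/k_{A_0}^X)\|_{BMO}$ and $\|\log(k_A^X/k_{A_1}^X)\|_{BMO}$ to actually vanish as $\epsilon \to 0$. Extracting this linear dependence requires carefully tracing constants through the holomorphic functional calculus of the first-order Dirac operators, together with a compactness/continuity argument on the space of constant matrices $A_0$ to ensure the implicit constants are uniform across the compact set of real constant $\Lambda$-elliptic matrices.
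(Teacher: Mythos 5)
Your plan is genuinely different from the paper's, and it has a real gap at its center. You want to factor the perturbation through the $t$-independent step and, for each step, extract a \emph{small-}$BMO$ estimate on the logarithm of the kernel ratio: $\|\log(k_{A_1}^X/k_{A_0}^X)\|_{BMO}\lesssim \epsilon$ from \cite{AAH} and $\|\log(k_A^X/k_{A_1}^X)\|_{BMO}\lesssim \epsilon$ from \cite{AA}. The second of these is plausible from the Dahlberg--Fefferman--Kenig--Pipher/Milakis--Pipher--Toro theory (small FKP-Carleson disagreement is essentially designed to give small log-kernel $BMO$, at least when the base operator is already good). But the first is \emph{not} available: \cite{AAH} gives nontangential $L^2$ estimates for solutions and for the difference of solutions, which translate (as the paper shows in Proposition \ref{perturbconstop.prop}) into an $L^2$ bound $\|k_{A_1}^{(0,t)} - k_{A_0}^{(0,t)}\|_{L^2(\rn)}\lesssim \epsilon^{\eta'}t^{-n/2}$; an $L^2$ control of the difference does not give pointwise (nor $BMO$-of-log) comparability of $k_{A_1}$ and $k_{A_0}$. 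In fact, a quantitative small-$BMO$ bound on $\log(k_{A_1}/k_{A_0})$ is essentially the single-scale content of the theorem you are trying to prove (and, so far as the literature goes, is not a known black box); you correctly flag it as ``the hard part'' but have not supplied an argument that crosses from $L^2$ to $BMO$-of-log, and the ``holomorphic dependence of the functional calculus'' does not obviously do it. The paper avoids this entirely by working with the $L^2$ difference plus non-degeneracy of the constant-coefficient kernel (Lemma \ref{constnondegen.lem}) to bound the $B_2$ ratio at a \emph{single} scale commensurate with the pole (Corollary \ref{cor:B2npc}), and then uses a change-of-pole argument (Lemma \ref{goodpolechange.lem}) with a $(1+C\delta^\mu)$ penalty to reach all small scales; your plan misses this scale bookkeeping, which is precisely where the $L^2$ estimate degrades.

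There is also a secondary technical issue in your closing step: from $\|\log H\|_{BMO}\lesssim \epsilon$ you cannot conclude the $B_2$-transfer inequality $\left(\fint_\Delta (k_A^X)^2\right)^{1/2}/\fint_\Delta k_A^X \le (1+C\epsilon)\left(\fint_\Delta (k_{A_0}^X)^2\right)^{1/2}/\fint_\Delta k_{A_0}^X$ by ``a short Cauchy--Schwarz-type computation,'' because the John--Nirenberg exponential tails only force $H$ close to a constant in a Lebesgue-measure sense, not pointwise. What you actually need is a reverse H\"older inequality for $H$ with constant close to $1$ and a suitable exponent (cf.\ \cite{Korey}), combined with H\"older on the $k_{A_0}$-weighted measure; that works, but it is not automatic and is part of the same circle of self-improvement ideas the paper invokes only after Theorem \ref{UHSperturbfinal.thrm} has been proved. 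Your first paragraph (reduce to the constant-coefficient kernel and use real-analyticity plus compactness over $\Lambda$-elliptic constant matrices) is fine and parallels Lemma \ref{constantcoeffoptimal.thrm}, though the paper proves that step via Corollary \ref{Messcor.cor} rather than by direct inspection of the explicit formula.
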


We will refer to the quotient in \eqref{eq:pertB2} as the \textit{$B_2$ constant of $k_A^X$ on $\Delta(y,\delta' r)$}.
For a constant-coefficient operator $A_0$, the Poisson kernel $k_{A_0}^{(0,1)}$ is smooth, so roughly speaking
\[ \text{ the $ B_2 $ constant of $k_{A_0}^{(0,1)}$ on $\Delta(0,\delta)$ } \rightarrow 1 \text{ as } \delta \to 0. \]
We expect the same to hold for perturbations of constant-coefficient operators. 
The above theorem says that, if a matrix $A$ is a sufficiently small perturbation of a constant-coefficient matrix, then its $B_2$ constant is sufficiently close to $1$ on small enough scale (in proportion to the distance of the pole to the boundary).

We first prove a simple estimate on the Poisson kernel of constant-coefficient operators.

\begin{lemma}[Non-degeneracy for constant-coefficient operators]\label{constnondegen.lem}
There exists a constant $c_1 = c_1(n, \Lambda)  > 0$ such that the following holds. If $A_0$ is a real, constant matrix satisfying the $\Lambda$-ellipticity condition, then the Poisson kernel for $L_0 = -\div A_0 \nabla$ in the upper half space, denoted by $k_{A_0}$, satisfies
\begin{equation}\label{eq:ndeg1}
	 \|k^X_{A_0}\|_{L^1(\Delta(y, \delta r))} \geq c_1\delta^{n},
\end{equation}
\begin{equation}\label{eq:ndeg2}
	 \|k^X_{A_0}\|_{L^2(\Delta(y, \delta r))} \geq c_1 \left( \frac{\delta}{r} \right)^{\frac{n}{2}}.
\end{equation}
for all $y\in \rn$, $r>0$, $\delta \in (0,1]$ and $X\in D(y,r)$. 
\end{lemma}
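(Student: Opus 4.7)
The plan is to normalize by translation and scaling, reduce to a lower bound at a single pole via Harnack's inequality, and finally establish that pointwise lower bound using the explicit form of the Poisson kernel for a constant-coefficient operator.

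First, I would exploit the invariance of $L_0 = -\div A_0 \nabla$ under horizontal translations $X \mapsto X - (y,0)$ and isotropic dilations $X \mapsto X/r$. Under these transformations the Poisson kernel rescales as $k_{A_0}^{(X-(y,0))/r}((z-y)/r) = r^n k_{A_0}^X(z)$, the region $D(y,r)$ maps to $D(0,1)$, and $\Delta(y, \delta r)$ maps to $\Delta(0, \delta)$. Hence it suffices to establish, in the normalized case $y=0$, $r=1$, $X \in D(0,1)$, $\delta \in (0,1]$, the estimates
\[ \|k_{A_0}^X\|_{L^1(\Delta(0, \delta))} \geq c_1 \delta^n, \qquad \|k_{A_0}^X\|_{L^2(\Delta(0, \delta))} \geq c_1 \delta^{n/2}; \]
undoing the change of variables then recovers the statement of the lemma, with the factor $r^{-n/2}$ appearing in the $L^2$ bound.

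Second, for each fixed $z \in \rn$, the function $X \mapsto k_{A_0}^X(z)$ is a positive $L_0$-solution on $\ree_+$, since it is the Lebesgue density of the $L_0$-solution $X \mapsto \hm^X$. The set $D(0,1)$ is a compact subset of $\ree_+$ lying at distance at least $1/2$ from $\rn \times \{0\}$, hence at distance at least $1/2$ from $(z,0)$ for every $z \in \Delta(0,1)$. Applying the Harnack inequality along a chain of balls of length depending only on $n$, we obtain
\[ k_{A_0}^{X_1}(z) \approx k_{A_0}^{X_2}(z), \qquad \forall\, X_1, X_2 \in D(0,1), \ \forall\, z \in \Delta(0, 1), \]
with implicit constants depending only on $n$ and $\Lambda$. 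It therefore suffices to prove the lower bound at the single pole $X^\ast := (0, 1)$.

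Third, for a constant-coefficient operator one can compute $k_{A_0}^{X^\ast}$ in closed form. Since only the symmetric part of $A_0$ contributes to $-\div A_0 \nabla$, we may assume $A_0$ is symmetric positive definite; the linear change of variables $Y = A_0^{-1/2} X$ then turns $L_0$ into $-\Delta$ in the rotated half-space $\{Y : \langle A_0^{1/2} e_{n+1}, Y\rangle > 0\}$. Pulling back the classical half-space Poisson kernel for the Laplacian through this change of variables yields an explicit positive smooth formula for $k_{A_0}^{X^\ast}(z)$, with pointwise bounds controlled in terms of $\Lambda$. In particular,
\[ k_{A_0}^{X^\ast}(z) \geq c(n, \Lambda) > 0, \qquad \forall\, z \in \Delta(0, 1). \]
Combining the three steps gives $k_{A_0}^X(z) \geq c(n,\Lambda)$ for all $z \in \Delta(0,\delta)$ and $X \in D(0,1)$, and integrating this pointwise bound over $\Delta(0,\delta)$, whose Lebesgue measure is $\omega_n \delta^n$, immediately yields both $L^1$ and $L^2$ lower bounds. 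The only delicate point is bookkeeping: verifying in the change-of-variables argument that the constant $c(n,\Lambda)$ coming out of the explicit formula truly depends only on $n$ and $\Lambda$, and not on the particular matrix $A_0$. If one wishes to avoid the closed-form computation, a purely PDE route is to combine Bourgain's estimate (Lemma \ref{Bourgain.lem}), which provides an averaged lower bound for $k_{A_0}^{X^\ast}$ on $\Delta(0,1)$, with the boundary Hölder regularity of quotients of positive solutions (Lemma \ref{QHC.lem}) to upgrade this averaged bound to the required pointwise one.
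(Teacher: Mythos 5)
Your main argument is correct and essentially identical to the paper's: reduce by translation/dilation invariance to the normalized case, symmetrize $A_0$, use the linear change of variables to pull back the classical half-space Poisson kernel and obtain an explicit formula, and then use Harnack to move the pole throughout $D(0,1)$. One caveat on your closing remark: the alternative "purely PDE" route is not quite right as sketched, since Lemma \ref{QHC.lem} concerns quotients of positive \emph{solutions} vanishing at the boundary, and $z \mapsto k^{X^\ast}_{A_0}(z)$ is a Radon--Nikodym density (a normal derivative of the Green function) rather than a solution, so Bourgain plus Lemma \ref{QHC.lem} alone do not upgrade the averaged bound to a pointwise one; one would instead need to invoke CFMS together with explicit Green-function decay in the half-space (or the boundary Harnack principle for the kernel function combined with a change of pole).
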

\begin{proof}
The second inequality follows from combining \eqref{eq:ndeg1} and H\"older's inequality. 
Since the solutions to constant-coefficient operator are scale invariant, it suffices to show
\begin{equation}\label{eq:ndegsi}
	\|k_{A_0}^X \|_{L^1(\Delta(0,\delta))} \geq c_1 \delta^n, \quad \text{ for every } X\in D(0,1). 
\end{equation} 

We claim that it suffices to consider the case when $A_0$ is symmetric. If not, let
\[ A_{0,s} := \frac{A_0 + A_0^T}{2} \]
be the symmetrization of $A_0$. Since $A_0$ is a constant coefficient matrix, the solution $u\in C^2$, and we have
\begin{align*}
	\divg(A_0 \nabla u ) = \sum_{i,j} \partial_i(a_{ij} \partial_j u ) & = \sum_{i,j} a_{ij} \, \partial_i\partial_j u \\
	&= \frac{\sum_{i,j} a_{ij} \, \partial_i\partial_j u + \sum_{i,j} a_{ji} \,\partial_i\partial_j u}{2} = \divg(A_{0,s} \nabla u).
\end{align*} 
Assume the matrix $A_0$ is symmetric. By the decomposition of symmetric matrices and a change of variable formula, we get an explicit formula for $k_{A_0}^{(0,1)}$:
\begin{equation}\label{eq:formulakA0}
	k_{A_0}^{(0,1)}(z) =  c_n \det (A_0)^{-\frac12} \, \dfrac{ \left|\sqrt{A_0}^{-1} \begin{pmatrix}
	0 \\
	1
\end{pmatrix} \right|}{ \left|\sqrt{A_0}^{-1} \begin{pmatrix}
	-z \\
	1
\end{pmatrix} \right|^{n+1}  },  
\end{equation} 
where $c_n = \Gamma((n+1)/2)/\pi^{(n+1)/2}$ is the same constant in the Poisson kernel for the Laplacian, and $\sqrt{A_0}^{-1} $ is defined by the diagonalization decomposition of $A_0$ (the square root matrix and inverse matrix exist, since the eigenvalues of $A_0$ are all bounded from below by $1/\Lambda$).
Then the lower bound \eqref{eq:ndegsi} is obtained easily by \eqref{eq:formulakA0} and the Harnack inequality.

\end{proof}

Next we analyze how the Poisson kernels change under a perturbation of the coefficient matrices.

\begin{proposition}\label{perturbconstop.prop}
There exists $\epsilon_0 = \epsilon_0(n,\Lambda)$ such that the following holds for every $\epsilon < \epsilon_0$. If $A=A(x,t)$ is a real matrix valued function on $\ree_+$ satisfying the $\Lambda$-ellipticity condition and moreover, $A$ is an $\epsilon$-perturbation of a real, constant-coefficient matrix $A_0$ satisfying the $\Lambda$-ellipticity condition, then 
\[\|k_A^{(x,t)} - k_{A_0}^{(x,t)}\|_{L^2(\rn)} \le C\epsilon^{\eta'}t^{-n/2},\]
where $C = C(n,\Lambda)$ and $\eta' = \eta'(n, \Lambda) > 0$ and $k_A^{(x,t)}$ and  $k_{A_0}^{(x,t)}$ (resp.) are the Poisson kernels for the operators $L = -\div A \nabla$ and $L_0 = -\div A_0 \nabla$ (resp.) in the upper half space. 
\end{proposition}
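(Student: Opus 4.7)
The plan is to estimate $\|k_A^{(x,t)} - k_{A_0}^{(x,t)}\|_{L^2(\rn)}$ by duality. For $\varphi \in C_c^\infty(\rn)$, denote by $u_L^\varphi$ the $L^2$-Dirichlet solution of $Lu = 0$ in $\ree_+$ with boundary data $\varphi$. Then
\[ \int_\rn \varphi(y)\bigl[k_A^{(x,t)}(y) - k_{A_0}^{(x,t)}(y)\bigr]\, dy = u_A^\varphi(x,t) - u_{A_0}^\varphi(x,t), \]
so it suffices to show the pointwise estimate $|u_A^\varphi(x,t) - u_{A_0}^\varphi(x,t)| \le C\epsilon^{\eta'} t^{-n/2} \|\varphi\|_{L^2(\rn)}$. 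A preliminary issue is that the $L^2$-Dirichlet problem for $L$ itself must be solvable; this is where the two-fold perturbation structure enters. The step $L_0 \to L_1$ is a small transversally-independent $L^\infty$ perturbation handled by the first-order layer potential theory of \cite{AAH, AAMc}, while $L_1 \to L$ is a small Carleson perturbation handled by \cite{AA}. For $\epsilon < \epsilon_0(n,\Lambda)$ sufficiently small, both steps preserve $L^2$-solvability with constants depending only on $n$ and $\Lambda$, together with the square function and non-tangential maximal estimates on $\nabla u_A^\varphi$ needed below.

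With solvability in hand, the pointwise bound on $u_A^\varphi - u_{A_0}^\varphi$ comes from the Green's function representation. Setting $w := u_A^\varphi - u_{A_0}^\varphi$, one has $L_0 w = \div\bigl((A - A_0)\nabla u_A^\varphi\bigr)$ weakly with vanishing boundary trace, so for $X = (x,t)$,
\[ w(X) = -\iint_{\ree_+} \nabla_Y G_{L_0^T}(Y,X) \cdot (A(Y) - A_0)\,\nabla u_A^\varphi(Y)\, dY, \]
where $G_{L_0^T}$ is the Green function for $L_0^T$ in $\ree_+$. The decomposition $A - A_0 = (A_1 - A_0) + B$ splits this into two pieces. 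For the $L^\infty$ piece $A_1(y) - A_0$, which is $t$-independent with $\|A_1 - A_0\|_{L^\infty} \le \epsilon$, one exploits the explicit pointwise regularity and decay of $\nabla_Y G_{L_0^T}(\cdot, X)$ (since $A_0$ is constant-coefficient) together with standard square-function / non-tangential control of $\nabla u_A^\varphi$ to obtain a bound of order $\epsilon\, t^{-n/2}\|\varphi\|_{L^2(\rn)}$. For the Carleson piece $B$, the fact that $|B(Y)|^2 t^{-1} dY$ is a Carleson measure of norm $\le \epsilon^2$ combined with a Carleson-measure / square-function argument in the spirit of \cite{FKP, AA} produces a bound of order $\epsilon^{\eta_2}\, t^{-n/2}\|\varphi\|_{L^2(\rn)}$ for some $\eta_2 \in (0,1]$ depending only on $n$ and $\Lambda$. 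Taking $\eta' = \min(1, \eta_2)$ concludes the argument.

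The principal obstacle is obtaining the quantitative rate $\epsilon^{\eta_2}$ in the Carleson step with the correct scale-invariant factor $t^{-n/2}$. The classical Dahlberg-Fefferman-Kenig-Pipher perturbation theorem is essentially qualitative, yielding $A_\infty$-equivalence of elliptic measures under a small Carleson perturbation but no explicit rate of convergence as the Carleson norm shrinks. Extracting the power $\epsilon^{\eta'}$ requires interpolating the a priori bound $\|k_A^{(x,t)}\|_{L^2(\rn)} \lesssim t^{-n/2}$ (available from the solvability of $L$) against a weaker but quantitative closeness derived from the square-function and Carleson-measure estimates, while carefully tracking how $\|B\|_{\C}$ propagates through the associated stopping-time / Littlewood-Paley arguments of \cite{AA}. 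A further subtlety is verifying that $w$ admits the Green's representation above; for $\varphi \in C_c^\infty(\rn)$ this is justified by the decay of $u_A^\varphi$ at infinity, but care is required because $u_A^\varphi$ is not a priori in $W^{1,2}(\ree_+)$ globally.
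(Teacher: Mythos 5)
There is a genuine gap, and you have in fact put your finger on it yourself: you acknowledge that the classical FKP perturbation theorem is qualitative and that extracting a power $\epsilon^{\eta_2}$ in the Carleson step is the ``principal obstacle'', but you do not resolve it. The Green's function representation
\[ w(X) = -\iint_{\ree_+} \nabla_Y G_{L_0^T}(Y,X) \cdot (A(Y) - A_0)\,\nabla u_A^\varphi(Y)\, dY \]
forces you into exactly the sort of argument that is hardest to make quantitative: the integral has a non-integrable singularity $|\nabla_Y G_{L_0^T}(Y,X)| \sim |Y - X|^{-n}$ near $X$, so you cannot use the Carleson bound $\|B\|_{\C} \le \epsilon$ pointwise; you are forced into square-function/stopping-time arguments in which the dependence on $\|B\|_{\C}$ is notoriously hard to extract explicitly. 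The vague ``interpolating \ldots against a weaker but quantitative closeness'' is precisely the step that needs a concrete mechanism, and you don't supply one.

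The paper bypasses this entirely. The crucial input you are missing is that the first-order method of Auscher--Axelsson (and Auscher--Axelsson--Hofmann) yields a genuinely \emph{quantitative, linear-in-$\epsilon$} perturbation estimate at the level of the solutions themselves: in the notation of the paper, $\|\ntt(\hat u_f^1 - \hat u_f^0)\|_{L^2(\rn)} \le C\epsilon\|f\|_{L^2(\rn)}$ for the $L^\infty$ step and $\|\ntt^{3/2}(u_f - u_f^1)\|_{L^2(\rn)} \le C\epsilon\|f\|_{L^2(\rn)}$ for the Carleson step. You only mention ``square function and non-tangential maximal estimates on $\nabla u_A^\varphi$'' (i.e.\ on the solution itself, not on the \emph{difference} of solutions), which is not the relevant estimate. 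Once one has the linear NT-maximal estimate on the difference, the remaining work is entirely local and elementary: average over a shrunken Whitney region $\gamma W$ with $\gamma$ a free parameter to get a bound of order $\epsilon \gamma^{-(n+1)/2} t^{-n/2}\|f\|_{L^2}$, then use De Giorgi--Nash--Moser interior H\"older continuity to control the discrepancy between a pointwise value and a $\gamma W$-average by $\gamma^\eta t^{-n/2}\|f\|_{L^2}$, and finally choose $\gamma = \epsilon^{2/(2\eta + n + 1)}$ to balance. This produces $\eta' = 2\eta/(2\eta+n+1)$ explicitly; no Green's function representation, no stopping-time bookkeeping, and no issue with justifying convergence of a singular integral. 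Your duality reduction is the same as the paper's, and your two-fold decomposition is the right one, but without the NT-maximal estimate on the difference from \cite{AAH, AA} the argument does not close.
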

\begin{remark}
	The above proposition could be amended, for instance, to include the case of perturbations of real, \textit{$t$-independent, symmetric} coefficients, but the case of a perturbation of constant coefficients is sufficient for our purposes.
\end{remark}

\begin{proof}
Without loss of generality assume $x = 0$. By the explicit formula in \eqref{eq:formulakA0}, it is easy to see that the kernel $k_{A_0}^{(0,1)}$ for the constant coefficient matrix $A_0$ has the form $ \frac{c}{\left(1+a_1^2 z_1^2 + \cdots + a_{n-1}^2 z_{n-1}^2 \right)^{\frac{n+1}{2}} }$, and thus $k_{A_0}^{(0,1)} \in L^2(\mathbb{R}^n)$. Fix $t > 0$. We can similarly get that $k_{A_0}^{(0,t)} \in L^2(\mathbb{R}^n)$. 

Our first goal is to show 
\begin{equation}\label{FG.eq}
\|k_{A_1}^{(0,t)} - k_{A_0}^{(0,t)}\|_{L^2(\rn)} \le C\epsilon^{\eta'}t^{-n/2},
\end{equation}
where $A_1(x)$ is the $t$-independent part in the decomposition \eqref{eq:decompA} of $A$. 

To that end we use the $L^2$-duality:
\begin{align}
\|k_{A_1}^{(0,t)} - k_{A_0}^{(0,t)}\|_{L^2(\rn)} &= \sup_{\|f\|_{L^2(\rn)} = 1} \left|\int_{\rn} \left(k_{A_1}^{(0,t)}(y) - k_{A_0}^{(0,t)}(y) \right) f(y) \, dy \right| \nonumber \\
& = \sup_{\|f\|_{L^2(\rn)} = 1} \left|u_f^1(0,t) - u_f^0(0,t) \right|, \label{eq:L2duality}
\end{align}
where for each $f\in L^2(\RR^n)$, we define 
\begin{equation}\label{eq:hmsol}
	u_f^i(x,t) = \int_{\RR^n} k_{A_i}^{(x,t)} (y) f(y) dy,\quad i=0,1. 
\end{equation} 
They are solutions to the Dirichlet problem for the operators $L_i = -\divg A_i \nabla $ with data $f$. Note that when we consider (not necessarily symmetric) elliptic operators in an unbounded domain $\mathbb{R}_+^{n+1}$, there may be several weak solutions to the Dirichlet problem, depending on which function spaces we consider (see \cite{Axel10} for example). Throughout the paper, we will refer to the solution obtained as in \eqref{eq:hmsol} (i.e. by integrating the boundary data against the elliptic measure) as the \textit{elliptic measure solution}. In particular when $0\leq f\in C_c^\infty(\mathbb{R}^n)$ the elliptic measure always satisfies $0\leq u_f(t,x) \leq \sup |f|$.

From the work of Auscher, Axelsson and Hofmann \cite[Theorem 1.1]{AAH} on $L^\infty$ perturbation result of the boundary value problem, the Dirichlet problem to $-\divg(A_1(x) \nabla u)=0$ with $L^2$ boundary data is well defined: for every $f\in L^2(\mathbb{R}^n)$ there exists solutions $\hat{u}_f^i$, $i=0,1$, such that
\begin{equation}\label{1star.eq}
\|\ntt \hat{u}_f^1\|_{L^2(\rn)}, \|\ntt \hat{u}_f^0\|_{L^2(\rn)} \le C \|f\|_{L^2(\rn)};
\end{equation}
moreover, we have
\begin{equation}\label{2star.eq}
\|\ntt (\hat{u}_f^1 - \hat{u}_f^0)\|_{L^2(\rn)} \le C\epsilon \|f\|_{L^2(\rn)},
\end{equation}
provided that $\epsilon > 0$ in the statement of the theorem is small enough depending on $n$ and $\Lambda$. We remark that this requires the use of (complex) analytic perturbations, but we restrict to a ``real neighborhood", where the result is (trivially) also true. When the boundary data $0\leq f \in C_c^\infty(\mathbb{R}^n)$, these two solutions agree, i.e. $u_f^i = \hat{u}_f^i$. We defer the proof to Subsection \ref{sec:uniquesol} of Appendix \ref{Firstorderapp.app}.

With $(x,t) = (0,t)$ fixed, we define $W := W(0,t) =  \Delta(0,t/2) \times (t/2, 3t/2)$ and for $\gamma \in (0,1]$ we define $\gamma W = \Delta(0, \gamma t/2) \times (t  -  \tfrac{\gamma t}{2} , t+ \tfrac{\gamma t}{2})$. We make the observation that for $\gamma \le 1/2$, $\gamma W \subset W(y,t)$\footnote{See Definition \ref{Carnorm.def}.} for all $y \in \Delta(x,t/4)$. 
It follows that for $\gamma \in (0, 1/2]$ and $y \in \Delta(x, t/4)$
\begin{align*}
\left(\fiint_{\gamma W} |u_f^i(Y)|^2 \, dY\right)^{1/2} &\le C\gamma^{-(n+1)/2} \left(\fiint_{W(y,t)} |u_f^i(Y)|^2 \, dY\right)^{1/2} 
\\& \le C\gamma^{-(n+1)/2}\ntt u^i_f(y)
\end{align*}
for $i = 0,1$. Averaging over $y \in \Delta(x,t/4)$ and using \eqref{1star.eq} we see
\begin{multline}\label{1startilde.eq}
\left(\fiint_{\gamma W} |u_f^i(Y)|^2 \, dY\right)^{1/2} \le C\gamma^{-(n+1)/2}t^{-n/2}\|\ntt u_f^i\|_{L^2(\Delta(x, t/4))} 
\\ \le C\gamma^{-(n+1)/2}t^{-n/2} \|f\|_{L^2(\rn)}
\end{multline}
for $i = 0,1$.
Similarly by \eqref{2star.eq}, we obtain that for $\gamma \in (0,1/2]$
\begin{equation}\label{2starprime.eq}
\left|\fiint_{\gamma W} \left( u_f^1(Y) - u_f^0(Y) \right) \, dY\right| \le C \epsilon \gamma^{-(n+1)/2}t^{-n/2} \|f\|_{L^2(\rn)}.
\end{equation}

Now we recall that solutions to divergence-form (real-valued) elliptic operators satisfy interior H\"older regularity, by DeGiorgi-Nash-Moser theory \cite{DeG,Nash, Moser}. Thus for $Y, Z \in (1/4)W$ and $i = 0,1$
\begin{multline}\label{1starminusprime.eq}
|u_f^i(Y) - u_f^i(Z)| \le C\left(\frac{|Y - Z|}{t} \right)^\eta \left(\fiint_{(1/2)W} |u^i_f|^2\right)^{1/2}
\\ \le  C\left(\frac{|Y - Z|}{t} \right)^\eta t^{-n/2} \|f\|_{L^2(\rn)},
\end{multline}
where we used \eqref{1startilde.eq} with $\gamma =1/2$ in the second inequality, and the constants $C$ and $\eta > 0$ depend only on $n$ and $\Lambda$. In particular, if we let $Y = (0,t)$ and average over $Z \in \gamma W \subset (1/4) W$ we obtain
\begin{equation}\label{1starprime.eq}
\left |u_f^i(0,t) - \fiint_{\gamma W} u_f^i(Z) dZ\right| \le \gamma^\eta t^{-n/2} \|f\|_{L^2(\rn)}
\end{equation}
for $i = 0,1$.

Setting $\gamma = \epsilon^{\frac{2}{2\eta + n + 1}}$ and then using the triangle inequality, \eqref{2starprime.eq} and \eqref{1starprime.eq} we obtain
\begin{equation}\label{eq:pwdiff}
|u_f^1(0,t) - u_f^0(0,t)| \le C(\gamma^\eta + \epsilon\gamma^{-(n+1)/2})t^{-n/2}\|f\|_{L^2(\rn)} \le C\epsilon^{\eta'}t^{-n/2}\|f\|_{L^2(\rn)},
\end{equation}
where $\eta' = \tfrac{2\eta}{2\eta + n + 1}$. Then the claim \eqref{FG.eq} follows from combining \eqref{eq:pwdiff} and the $L^2$ duality \eqref{eq:L2duality}.

Next, we show 
\begin{equation}\label{SG.eq}
\|k_{A_1}^{(0,t)} - k_{A}^{(0,t)}\|_{L^2(\rn)} \le C\epsilon^{\eta'}t^{-n/2}.
\end{equation}
This will follow exactly as the proof of \eqref{FG.eq}, provided we have the analogous estimates to \eqref{1star.eq} and \eqref{2star.eq}. These are afforded by the work of Auscher and Axelsson \cite{AA} on the boundary value problems for perturbations of $t$-independent operators\footnote{In particular, we must use the representation in \cite[Equation (42)]{AA} and the bounds established in \cite[Theorem 9.2, Lemma 10.2]{AA}. Here it should be noted that the functions $\tilde h^+$ used in the expression \cite[Equation (42)]{AA} are also in a perturbative regime with linear dependence on the FKP-Carleson norm, see the proof of \cite[Corollary 9.5]{AA}. See Appendix \ref{Firstorderapp.app}.}. Let $f \in L^2(\rn)$ and let $u_f$ be the (unique) solution to the $L^2$-Dirichlet problem for the operator $L = -\div A \nabla$. With $u_f^1$ as before, we may use \cite[Sections 9 and 10]{AA}
\begin{equation}\label{1starpprime.eq}
\|\ntt u_f \|_{L^2(\rn)} \le C \|f\|_{L^2(\rn)}
\end{equation}
and 
\begin{equation}\label{2starpprime.eq}
\|\ntt^{3/2} (u_f - u_f^1) \|_{L^2(\rn)} \le C\epsilon \|f\|_{L^2(\rn)},
\end{equation}
provided that $\epsilon$ is small depending on $n$ and $\Lambda$. We discuss the derivation of these two estimates in detail in Appendix \ref{Firstorderapp.app}.

Above we have $L^p$-averages with $p = 3/2$ in the definition of $\ntt^{3/2}$, but we obtain `comparability' of $L^2$ and $L^{3/2}$ averages using Moser's local boundedness estimate for solutions since we work with real-valued elliptic operators. To be more precise, by denoting $\widetilde{W}(x,t):= \Delta(x, t) \times (t/4,t) \supset W(x,t)$, Moser type estimate gives
\[ \sup_{Y\in W(x,t)} |u_f(Y)| \lesssim \left( \fint_{\widetilde{W}(x,2t)} |u_f|^2 ~dy ds \right)^{1/2} \leq \widehat{N}_*(u_f)(x),  \]
where $\widehat{N}_*(u_f)$ is the non-tangential maximal function defined as $\widetilde{N}_*(u_f)$ in \eqref{def:NF}, but with a fatter Whitney region $\widetilde{W}(x,t)$ in place of $W(x,t)$. On the other hand, it is a classical result 
that $\|\widetilde{N}_*(u_f)\|_{L^2(\rn)} \approx \|\widehat{N}_*(u_f)\|_{L^2(\rn)}$ (i.e. the $L^2$-norm of non-tangential maximal function is independent of the opening angle). The same holds for $u_f^1$. Hence we get
\begin{align}
	\|\ntt(u_f - u_f^1) \|_{L^2(\rn)} & \lesssim \left\| \left(\widehat{N}_*(u_f) \right)^{1/4} \cdot \left( \ntt^{3/2}(u_f-u_f^1) \right)^{3/4} \right\|_{L^2(\rn)} \nonumber  \\
	& \qquad \quad + \left\| \left(\widehat{N}_*(u_f^1) \right)^{1/4} \cdot \left( \ntt^{3/2}(u_f-u_f^1) \right)^{3/4} \right\|_{L^2(\rn)} \nonumber \\
	& \lesssim \left(  \|\ntt(u_f)\|_{L^2(\rn)}^{1/4} + \|\ntt(u_f^1)\|_{L^2(\rn)}^{1/4} \right) \|\ntt^{3/2}(u_f - u_f^1) \|_{L^2(\rn)}^{3/4} \nonumber \\
	& \leq C \epsilon^{3/4} \|f\|_{L^2(\rn)},\label{eq:pertAA1}
\end{align}
where the last estimate follows by combining \eqref{1starpprime.eq} and \eqref{2starpprime.eq}. 
The inequalities \eqref{1starpprime.eq} and \eqref{eq:pertAA1} are the direct analogue of \eqref{1star.eq} and \eqref{2star.eq}. 
 Proceeding exactly as before we obtain \eqref{SG.eq}, and combining this with \eqref{FG.eq} we have 
\[\|k_A^{(x,t)} - k_{A_0}^{(x,t)}\|_{L^2(\rn)} \le C\epsilon^{\eta'}t^{-n/2},\]
as desired. 
\end{proof}

Proposition \ref{perturbconstop.prop} has one unfortunate drawback the estimate depends on the placement of the pole. This stops us from working with the small scales without doing some extra work. In order to do work with the small scales, we take three steps:
\begin{enumerate}
\item Use Proposition \ref{perturbconstop.prop} to immediately give us a good `single-scale $B_2$ type estimate'  for perturbations of constant coefficient operators (Corollary \ref{cor:B2npc}). 
\item Observe how the change of pole argument interacts with single-scale $B_2$ estimates to allow us to move down to small scales (Lemma \ref{goodpolechange.lem}) by paying a (controllable) penalty.
\item Combining (1) and (2) we obtain a good estimate for perturbations of constant coefficient operators down to small scales (Corollary \ref{Messcor.cor}).
\end{enumerate}

\begin{corollary}\label{cor:B2npc}
	For every $\beta \in (0,1)$ and $\delta\in (0,1)$ there exists $\epsilon_1 = \epsilon_1(\delta, \beta, n, \Lambda) \in (0, \epsilon_0)$ such that the following holds.
	Suppose $A=A(x,t)$ is a real matrix-valued function on $\ree_+$ satisfying the $\Lambda$-ellipticity condition and $A$ is an $\epsilon_1$-perturbation of a real, constant-coefficient matrix $A_0$ satisfying the $\Lambda$-ellipticity condition.
Then
\begin{equation}\label{Messcor.eq1}
\frac{\left(\fint_{\Delta(y,\delta r)} \left[k_A^X(z)\right]^2 \, dz \right)^{1/2}}{\fint_{\Delta(y,\delta r)} k_A^X(z) \, dz } \le (1 + \beta) \frac{\left(\fint_{\Delta(y,\delta r)} \left[k_{A_0}^X(z)\right]^2 \, dz \right)^{1/2}}{\fint_{\Delta(y,\delta r)} k_{A_0}^X(z) \, dz }
\end{equation}
for all $y\in \rn$, $r > 0$, $y \in \rn$ and $X \in D(y,r)$.
\end{corollary}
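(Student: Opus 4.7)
The plan is to show that both $\|k_A^X\|_{L^1(\Delta)}$ and $\|k_A^X\|_{L^2(\Delta)}$ (with $\Delta := \Delta(y,\delta r)$) are small multiplicative perturbations of the corresponding $L^1$ and $L^2$ norms of $k_{A_0}^X$, with the ``small'' being controllable by choosing $\epsilon_1$ sufficiently small in terms of $\delta$, $\beta$, $n$ and $\Lambda$. Write $X = (x_0, t)$ with $t \ge r/2$, since $X \in D(y,r)$, and set $u := k_A^X$, $v := k_{A_0}^X$.

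First, apply Proposition \ref{perturbconstop.prop} at the pole $X$ to get
\[
\|u - v\|_{L^2(\rn)} \le C \epsilon^{\eta'} t^{-n/2} \le C' \epsilon^{\eta'} r^{-n/2},
\]
provided $\epsilon < \epsilon_0$. Since $|\Delta| \approx (\delta r)^n$, H\"older's inequality then gives
\[
\|u-v\|_{L^1(\Delta)} \le |\Delta|^{1/2}\|u-v\|_{L^2(\Delta)} \le C'' \delta^{n/2}\epsilon^{\eta'}.
\]
Next, invoke Lemma \ref{constnondegen.lem} to produce lower bounds for the kernel of the constant-coefficient operator at the same pole,
\[
\|v\|_{L^1(\Delta)} \ge c_1 \delta^n, \qquad \|v\|_{L^2(\Delta)} \ge c_1 (\delta / r)^{n/2} = c_1 \delta^{n/2} r^{-n/2}.
\]
Dividing the difference bounds by these, the relative errors are controlled by
\[
\frac{\|u - v\|_{L^2(\Delta)}}{\|v\|_{L^2(\Delta)}} + \frac{\|u - v\|_{L^1(\Delta)}}{\|v\|_{L^1(\Delta)}} \le C_3 \delta^{-n/2}\epsilon^{\eta'} =: \tilde \beta.
\]

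Now choose $\epsilon_1 = \epsilon_1(\delta, \beta, n, \Lambda) \in (0, \epsilon_0)$ so small that $\tilde \beta \le \beta/3$ (which is possible since only $\delta^{-n/2}\epsilon^{\eta'}$ needs to be made small). Then by the triangle inequality
\[
\|u\|_{L^2(\Delta)} \le (1 + \tilde\beta) \|v\|_{L^2(\Delta)}, \qquad \|u\|_{L^1(\Delta)} \ge (1 - \tilde\beta)\|v\|_{L^1(\Delta)},
\]
and therefore
\[
\frac{\left(\fint_\Delta u^2\right)^{1/2}}{\fint_\Delta u} = \frac{|\Delta|^{1/2}\|u\|_{L^2(\Delta)}}{\|u\|_{L^1(\Delta)}} \le \frac{1 + \tilde\beta}{1 - \tilde\beta}\cdot\frac{|\Delta|^{1/2}\|v\|_{L^2(\Delta)}}{\|v\|_{L^1(\Delta)}} \le (1+\beta)\frac{\left(\fint_\Delta v^2\right)^{1/2}}{\fint_\Delta v},
\]
for $\tilde\beta$ small enough. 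There is no real obstacle; the only subtlety is that the degeneration in $\delta$ through the factor $\delta^{-n/2}$ forces $\epsilon_1$ to depend on $\delta$, which is consistent with the statement of the corollary.
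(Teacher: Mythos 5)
Your proof is correct and follows essentially the same route as the paper: apply Proposition \ref{perturbconstop.prop} at the pole $X$ to bound $\|k_A^X - k_{A_0}^X\|_{L^2}$, pass to $L^1$ via H\"older on $\Delta(y,\delta r)$, divide by the non-degeneracy bounds from Lemma \ref{constnondegen.lem}, and conclude by the triangle inequality after choosing $\epsilon_1$ so that the resulting relative error $\tilde\beta \lesssim \delta^{-n/2}\epsilon_1^{\eta'}$ is small. The paper writes this slightly more compactly (choosing $\epsilon_1$ directly from $C\epsilon_1^{\eta'}\le\beta_1 c_1\delta^{n/2}$), but the argument is the same.
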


\begin{proof}
	Let $\beta_1>0$ be fixed, whose value will be chosen later. By Proposition \ref{perturbconstop.prop} and Lemma \ref{constnondegen.lem}, if $\epsilon_1$ is chosen so that $ C \epsilon_1^{\eta'} \le \beta_1 c_1 \delta^{n/2}$ we have
 \[\|k_A^X - k_{A_0}^X\|_{L^2(\Delta(y,\delta r))}  \le \beta_1 \|k_{A_0}^X \|_{L^2(\Delta(y,\delta r))} \]
 and 
  \[\|k_A^X - k_{A_0}^X\|_{L^1(\Delta(y,\delta r))}  \le \beta_1 \|k_{A_0}^X \|_{L^1(\Delta(y,\delta r))}. \]
 Thus, for this choice of $\epsilon_1$ we have
 \[\frac{\|k_A^X\|_{L^2(\Delta(y, \delta r))}}{\|k_A^X\|_{L^1(\Delta(y, \delta r))}} \le \frac{1 + \beta_1}{1 - \beta_1} \frac{\|k_{A_0}^X\|_{L^2(\Delta(y, \delta r))}}{\|k_{A_0}^X\|_{L^1(\Delta(y, \delta r))}}, \]
 which yields \eqref{Messcor.eq1} upon normalization and appropriate choice of $\beta_1$ (so that $\tfrac{1 + \beta_1}{1 - \beta_1} \le 1+\beta$).
\end{proof}

The above corollary says that for any $\beta \in (0,1)$, if $A$ is a small perturbation of a constant-coefficient matrix $A_0$, then the $B_2$ constant of $k_A^X$ is bounded by $(1+\beta)$ times that of $k_{A_0}^X$. As mentioned above, the caveat is that the smallness also depends on $\delta$, which is the ratio between the radius of the surface ball in consideration and the distance from the pole to the boundary. This prevents us from getting $B_2$ type estimates for all sufficiently small scales (with fixed pole). We will overcome this by a `good change of pole'.

\begin{lemma}[Change of pole comparison]\label{goodpolechange.lem}
Let $L = -\div A \nabla$ be a real divergence form operator on $\ree_+ $, with $A$ satisfying the $\Lambda$-ellipticity condition. We also assume the corresponding Poisson kernel $k_A$ exists and is in $L_{loc}^2(\rn)$. 
There are constants $\delta_0 = \delta_0(n,\Lambda)\in (0,1)$ and $C = C(n,\Lambda)>1$ such that for any $\delta\in (0,\delta_0)$ fixed, 

\begin{multline}\label{polechangeavg.eq}(1 - C\delta^\mu)\frac{\left(\fint_{\Delta(y,\delta'r)}\left(k_A^{(y,\tfrac{\delta'}{\delta}r)}(z)\right)^2 \right)^{1/2}}{\fint_{\Delta(y,\delta'r)}k_A^{(y,\tfrac{\delta'}{\delta}r)}(z)} \le \frac{\left(\fint_{\Delta(y,\delta'r)}\left(k_A^{X}(z)\right)^2 \right)^{1/2}}{\fint_{\Delta(y,\delta'r)}k_A^{X}(z)}
\\ \le (1 + C\delta^\mu)\frac{\left(\fint_{\Delta(y,\delta'r)}\left(k_A^{(y,\tfrac{\delta'}{\delta}r)}(z)\right)^2 \right)^{1/2}}{\fint_{\Delta(y,\delta'r)}k_A^{(y,\tfrac{\delta'}{\delta}r)}(z)}
\end{multline}
for any $y\in \rn$, $r > 0$, $\delta'\in (0, \delta/10]$ and $X \in D(y,r)$.
\end{lemma}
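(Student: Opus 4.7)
The plan is to realize the $B_2$-like quotient as essentially a scale-invariant quantity once we factor out the kernel function relating two different poles. Specifically, the multiplicative change-of-pole factor $H(z) := d\omega^X/d\omega^{X_1}(z) = k_A^X(z)/k_A^{X_1}(z)$ (well-defined by the mutual absolute continuity of elliptic measures on $\ree_+$ and the hypothesis that $k_A$ exists) cancels between numerator and denominator if $H$ is nearly constant on $\Delta(y,\delta'r)$. The Hölder-type kernel function estimate (Lemma~\ref{Kernelcompare.lem}) guarantees exactly this near-constancy, provided we place the auxiliary pole $X_1$ at the natural scale associated to the surface ball.

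Concretely, I would set $X_1 := (y, \delta'r/\delta)$ and choose an annulus scale $\rho \approx \delta'r/\delta$ so that $X_1$ lies inside $B(y,2\rho)\setminus B(y,\rho)$. The two hypotheses of Lemma~\ref{Kernelcompare.lem} to verify are then: (i) $X_1$ in the prescribed annulus, which is by construction; and (ii) $X \in \Omega \setminus B(y,4\rho)$, which follows from $\delta' \leq \delta/10$ together with the lower bound $|X - y| \geq r/2$ coming from $X \in D(y,r)$. Note that $\ree_+$ is a (Lipschitz graph) chord arc domain with universal constants, so the constants in Lemma~\ref{Kernelcompare.lem} depend only on $n$ and $\Lambda$.

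Applying Lemma~\ref{Kernelcompare.lem} with $\zeta \approx \delta$ (which is permissible for $\delta_0$ small enough to force $\zeta < 1/2$) covers $\Delta(y,\delta'r) \subset \Delta(y,\zeta\rho)$ and yields
\[
\left|\frac{H(z)}{H(z')} - 1\right| \leq C\delta^\mu, \qquad \forall z,z' \in \Delta(y,\delta'r).
\]
Fixing any reference point $z_0 \in \Delta(y,\delta'r)$ and writing $H_0 := H(z_0)$, we obtain $H(z) = H_0(1 + \eta(z))$ with $|\eta(z)| \leq C\delta^\mu$ pointwise on $\Delta(y,\delta'r)$.

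It remains to insert this into the averages. Using $k_A^X = H k_A^{X_1}$ $\sigma$-almost everywhere:
\[
\fint_{\Delta(y,\delta'r)} k_A^X = H_0(1+\eta_1)\fint_{\Delta(y,\delta'r)} k_A^{X_1},\qquad \fint_{\Delta(y,\delta'r)} (k_A^X)^2 = H_0^2(1+\eta_2)\fint_{\Delta(y,\delta'r)} (k_A^{X_1})^2,
\]
with $|\eta_1|, |\eta_2| \leq C\delta^\mu$ (for $\eta_2$ one expands $(1+\eta)^2$ and absorbs the quadratic contribution, taking $\delta_0$ small). Taking the square root of the second identity and forming the ratio, the unknown constant $H_0$ disappears, leaving
\[
\frac{\bigl(\fint(k_A^X)^2\bigr)^{1/2}}{\fint k_A^X} = \frac{(1+\eta_2)^{1/2}}{1+\eta_1}\cdot\frac{\bigl(\fint(k_A^{X_1})^2\bigr)^{1/2}}{\fint k_A^{X_1}} = (1 + O(\delta^\mu))\cdot \frac{\bigl(\fint(k_A^{X_1})^2\bigr)^{1/2}}{\fint k_A^{X_1}},
\]
which is the two-sided bound claimed in \eqref{polechangeavg.eq} (after adjusting the constant $C$). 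The main obstacle is not analytic but geometric bookkeeping: choosing $\rho$ so that the annular and outer-ball hypotheses of Lemma~\ref{Kernelcompare.lem} are simultaneously met for every admissible $X \in D(y,r)$ and $\delta' \in (0,\delta/10]$, and then tracking $O(\delta^\mu)$ error terms cleanly through the quadratic step. Everything else is a direct application of the kernel function regularity.
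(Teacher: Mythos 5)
Your proposal is correct and follows essentially the same route as the paper's proof: both fix the auxiliary pole at $X_1=(y,\delta'r/\delta)$, apply the kernel function estimate (Lemma~\ref{Kernelcompare.lem}) at scale $\rho\approx\delta'r/\delta$ with $\zeta\approx\delta$, verify the annular placement of $X_1$ and the exterior placement of $X$ from $\delta'\le\delta/10$, and then cancel the nearly constant factor $H$ in the $B_2$ quotient. The only cosmetic difference is that the paper bounds the quotient directly by $\sup_{z,z'}H(z)/H(z')$ instead of introducing your explicit error terms $\eta_1,\eta_2$.
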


\begin{proof}
	We apply Lemma \ref{Kernelcompare.lem}, which allows us to change poles of the elliptic measure, to the upper half space $\ree_+$.
	To be precise, we consider the radius $s: =\frac{2}{3} \frac{\delta'}{\delta} r$, $\zeta = \frac{3}{2} \delta \in (0, 1/2) $ and poles $X_0 = X \in D(y,r)$ and $X_1 = (y, \delta'r/\delta) $. The fact that $X_0 \notin B(y,4s)$ follows from the assumption $\delta' \leq \delta/10$, and clearly $X_1\in B(y,2s)\setminus B(y,s)$. Then there are constants $C, \mu>0$ such that the kernel function
	\[ H(z) = \frac{d\omega^X}{d\omega^{(y, \frac{\delta'}{\delta} r)}} (z) \]
	satisfies
	\[
		\left| \frac{H(z)}{H(y)} - 1 \right| \leq C \delta^{\mu}, \quad \text{ for every } z\in \Delta(y, \zeta s) = \Delta(y, \delta'r ).
	\]
	Hence when $\delta$ is sufficiently small, we have for every $z, z'\in \Delta(y, \delta' r)$,
	\[ \frac{H(z)}{H(z')} \leq \frac{H(y) (1+C \delta^{\mu})}{H(y) (1-C \delta^{\mu})} \leq 1+C' \delta^{\mu}. \]
	On the other hand,
	\[ k_A^X(z) = \frac{d\omega^X}{d\sigma}(z) = H(z) \, \frac{d\omega^{(y, \frac{\delta'}{\delta} r)}}{d\sigma}(z) = H(z) \, k_A^{(y, \frac{\delta'}{\delta} r)}(z). \]
	Therefore
	\begin{align*} \frac{\left(\fint_{\Delta(y,\delta'r)}\left(k_A^{X}(z)\right)^2 \right)^{1/2}}{\fint_{\Delta(y,\delta'r)}k_A^{X}(z)}
 &\le \sup_{z,z' \in \Delta(y, \delta'r)}\frac{H(z)}{H(z')} \cdot \frac{\left(\fint_{\Delta(y,\delta'r)}\left(k_A^{(y,\tfrac{\delta'}{\delta}r)}(z)\right)^2 \right)^{1/2}}{\fint_{\Delta(y,\delta'r)}k_A^{(y,\tfrac{\delta'}{\delta}r)}(z)}
\\ &\le (1 + C'\delta^\mu)\frac{\left(\fint_{\Delta(y,\delta'r)}\left(k_A^{(y,\tfrac{\delta'}{\delta}r)}(z)\right)^2 \right)^{1/2}}{\fint_{\Delta(y,\delta'r)}k_A^{(y,\tfrac{\delta'}{\delta}r)}(z)}.
\end{align*}
This is the second inequality of \eqref{polechangeavg.eq}.
The first inequality is obtained similarly.
\end{proof}

\begin{corollary}\label{Messcor.cor} 
For every $\beta \in (0,1)$ and $\delta\in (0, \delta_0)$, where $\delta_0$ is from Lemma \ref{goodpolechange.lem}, there exists $\epsilon_1 = \epsilon_1(\delta, \beta, n, \Lambda) \in (0, \epsilon_0)$ such that the following holds for every $\epsilon< \epsilon_1$. (Here $\epsilon_0$ is as in Proposition \ref{perturbconstop.prop}.)

If $A=A(x,t)$ is a real matrix valued function on $\ree_+$ satisfying the $\Lambda$-ellipticity condition and moreover, $A$ is an $\epsilon$-perturbation of a real, constant-coefficient matrix $A_0$ satisfying the $\Lambda$-ellipticity condition, then
\begin{equation}\label{Messcor.eq2}
\frac{\left(\fint_{\Delta(y, \delta'r)} \left[k_A^X(z)\right]^2 \, dz \right)^{1/2}}{\fint_{\Delta(y,\delta' r)} k_A^X(z) \, dz }\leq 
(1 + C \delta^\mu)(1 + \beta) \frac{\left(\fint_{\Delta(0,\delta)} \left[k_{A_0}^{(0,1)}(z)\right]^2 \, dz \right)^{1/2}}{\fint_{\Delta(0,\delta)} k_{A_0}^{(0,1)}(z) \, dz } 
\end{equation}
for every $y\in \rn$, $r > 0$, $ \delta' \in (0,\delta/10]$ and $X \in D(y,r)$.
\end{corollary}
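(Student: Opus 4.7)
The plan is to execute precisely the three-step outline given immediately before the statement. First, starting from the $B_2$-type quotient for $k_A^X$ on $\Delta(y,\delta'r)$ with pole $X \in D(y,r)$, I will apply Lemma \ref{goodpolechange.lem} (which applies because $\delta \in (0,\delta_0)$, $\delta' \in (0,\delta/10]$, and $X \in D(y,r)$) to replace the pole $X$ with the new pole $P:=(y,\delta'r/\delta)$, paying the multiplicative penalty $(1+C\delta^\mu)$:
\[
\frac{\left(\fint_{\Delta(y,\delta' r)} [k_A^X]^2\right)^{1/2}}{\fint_{\Delta(y,\delta' r)} k_A^X} \;\le\; (1 + C\delta^\mu)\, \frac{\left(\fint_{\Delta(y,\delta' r)} [k_A^{P}]^2\right)^{1/2}}{\fint_{\Delta(y,\delta' r)} k_A^{P}}.
\]

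Second, I set $r':=\delta'r/\delta$, so that $\Delta(y,\delta' r)=\Delta(y,\delta r')$ and the new pole $P=(y,r')$ trivially lies in $D(y,r')$ (indeed its height equals $r'> r'/2$). Choosing $\epsilon_1=\epsilon_1(\delta,\beta,n,\Lambda)$ to be the constant supplied by Corollary \ref{cor:B2npc} for this same pair $(\delta,\beta)$, and applying that corollary at the rescaled base point $(y,r')$, I get
\[
\frac{\left(\fint_{\Delta(y,\delta r')} [k_A^{P}]^2\right)^{1/2}}{\fint_{\Delta(y,\delta r')} k_A^{P}} \;\le\; (1 + \beta)\, \frac{\left(\fint_{\Delta(y,\delta r')} [k_{A_0}^{P}]^2\right)^{1/2}}{\fint_{\Delta(y,\delta r')} k_{A_0}^{P}}.
\]

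Third, for the constant-coefficient matrix $A_0$ the explicit formula \eqref{eq:formulakA0} makes translation and dilation invariance transparent: $k_{A_0}^{(y,r')}(z)=(r')^{-n}k_{A_0}^{(0,1)}((z-y)/r')$. A change of variables $w=(z-y)/r'$ in the $L^1$- and $L^2$-averages shows that the scaling factors cancel in the $B_2$ ratio, so that
\[
\frac{\left(\fint_{\Delta(y,\delta r')} [k_{A_0}^{P}]^2\right)^{1/2}}{\fint_{\Delta(y,\delta r')} k_{A_0}^{P}} \;=\; \frac{\left(\fint_{\Delta(0,\delta)} [k_{A_0}^{(0,1)}]^2\right)^{1/2}}{\fint_{\Delta(0,\delta)} k_{A_0}^{(0,1)}}.
\]
Chaining the three estimates yields \eqref{Messcor.eq2}.

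There is no genuine obstacle, only bookkeeping: the only point to verify carefully is the compatibility of the scales, namely that the pole $P$ produced by Lemma \ref{goodpolechange.lem} is at exactly the right Whitney height (relative to the shifted radius $r'$) for Corollary \ref{cor:B2npc} to be applicable at that shifted scale. The hypothesis $\delta' \le \delta/10$ in the statement is precisely what is needed to make both ingredients applicable simultaneously, and the choice of $\epsilon_1$ inherited from Corollary \ref{cor:B2npc} depends only on $(\delta,\beta,n,\Lambda)$, as required.
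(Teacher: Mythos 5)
Your proof is correct and follows essentially the same route as the paper's: first change the pole via Lemma \ref{goodpolechange.lem}, then invoke Corollary \ref{cor:B2npc} at the rescaled base point $(y,\delta'r/\delta)$ (which the paper also calls $s$), and finish with translation/dilation invariance of the constant-coefficient kernel. The verification that the shifted pole lands in $D(y,r')$ is the same bookkeeping done implicitly in the paper.
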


\begin{proof}
	The proof is by combining Corollary \ref{cor:B2npc} and the change of pole comparison in Lemma \ref{goodpolechange.lem}. Set $s= \frac{\delta'}{\delta} r$, the estimates
	\eqref{polechangeavg.eq} and \eqref{Messcor.eq1} yield
\begin{align*} \frac{\left(\fint_{\Delta(y,\delta'r)}\left(k_A^{X}(z)\right)^2 \right)^{1/2}}{\fint_{\Delta(y,\delta'r)}k_A^{X}(z)}
&\le (1 + C \delta^\mu)\frac{\left(\fint_{\Delta(y,\delta s)}\left(k_A^{(y, s)}(z)\right)^2 \right)^{1/2}}{\fint_{\Delta(y,\delta s)}k_A^{(y, s)}(z)}
\\ &\le (1 + C\delta^\mu)(1 + \beta) \frac{\left(\fint_{\Delta(y,\delta s)}\left(k_{A_0}^{(y, s)}(z)\right)^2 \right)^{1/2}}{\fint_{\Delta(y,\delta s)}k_{A_0}^{(y,s)}(z)}
\\ & \le (1 + C\delta^\mu)(1 + \beta) \frac{\left(\fint_{\Delta(0,\delta)} \left[k_{A_0}^{(0,1)}(z)\right]^2 \, dz \right)^{1/2}}{\fint_{\Delta(0,\delta)} k_{A_0}^{(0,1)}(z) \, dz }, 
\end{align*}
where we use the translation and dilation invariance for $k_{A_0}$, i.e. Poisson kernel for constant-coefficient operator, in the last estimate.
\end{proof}

We now apply Corollary \ref{Messcor.cor} twice. The first application is to get uniform control on the scale at which the $B_2$ constant becomes `near optimal' for constant coefficient operators. Surely \eqref{eq:formulakA0} allows us to show the following Lemma, but we we prove this via a compactness argument here only using the smoothness of the Poisson kernel for constant coefficient operators and the previous corollary.

\begin{lemma}\label{constantcoeffoptimal.thrm}
For every $\beta' \in (0,1)$ there exists $\delta_1 = \delta_1(\beta', n, \Lambda) \in ( 0, \delta_0) $ such that for any real, constant-coefficient elliptic matrix $A_0$ satisfying the $\Lambda$ ellipticity condition, we have
\[\frac{\left(\fint_{\Delta(0,\delta)} \left[k_{A_0}^{(0,1)}(z)\right]^2 \, dz \right)^{1/2}}{\fint_{\Delta(0,\delta)} k_{A_0}^{(0,1)}(z) \, dz } \le 1 + \beta'\]
for all $\delta \in (0, \delta_1)$.
\end{lemma}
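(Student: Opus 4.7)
The plan is to argue by contradiction, leveraging compactness of the set of $\Lambda$-elliptic constant matrices together with Corollary \ref{Messcor.cor}. Suppose the conclusion fails for some $\beta' \in (0,1)$: then there exist sequences $\{A_0^{(j)}\}$ of real, $\Lambda$-elliptic constant $(n+1)\times(n+1)$ matrices and $\delta_j \downarrow 0$ such that
\[
\Phi_j := \frac{\left(\fint_{\Delta(0,\delta_j)} [k_{A_0^{(j)}}^{(0,1)}]^2\right)^{1/2}}{\fint_{\Delta(0,\delta_j)} k_{A_0^{(j)}}^{(0,1)}} > 1+\beta'.
\]
The set of real $\Lambda$-elliptic constant $(n+1)\times(n+1)$ matrices is bounded in $\mathbb{R}^{(n+1)^2}$ and closed under limits, hence compact; pass to a subsequence so that $A_0^{(j)} \to A_0^{*}$ with $A_0^*$ still $\Lambda$-elliptic.

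Next, I would show that the $B_2$ quotient at the limiting matrix tends to $1$ as $\delta \to 0^+$. By the explicit formula \eqref{eq:formulakA0} (applied after symmetrizing $A_0^*$, which does not affect solutions), the kernel $k_{A_0^*}^{(0,1)}$ extends smoothly to a neighborhood of $0 \in \mathbb{R}^n$, and by Lemma \ref{constnondegen.lem} it is strictly positive at the origin. Hence both $\fint_{\Delta(0,\delta)} k_{A_0^*}^{(0,1)}$ and $(\fint_{\Delta(0,\delta)} [k_{A_0^*}^{(0,1)}]^2)^{1/2}$ converge to the positive value $k_{A_0^*}^{(0,1)}(0)$ as $\delta \to 0^+$, so
\[
\Phi(A_0^*, \delta) \;\longrightarrow\; 1 \quad \text{as } \delta \to 0^+.
\]

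To derive the contradiction I would apply Corollary \ref{Messcor.cor} treating each $A_0^{(j)}$ as an $\epsilon_j$-perturbation of $A_0^*$ with $\epsilon_j := \|A_0^{(j)} - A_0^*\|_\infty \to 0$ (its ``$t$-dependent'' Carleson piece being identically zero). Choose first $\beta_1 > 0$ so small that $(1+\beta_1)^3 < 1+\beta'$, then choose $\delta \in (0,\delta_0)$ so small that simultaneously $C\delta^\mu < \beta_1$ and $\Phi(A_0^*, \delta) < 1+\beta_1$ (using the previous paragraph). With this $\delta$ and $\beta_1$, Corollary \ref{Messcor.cor} produces $\epsilon_1(\delta, \beta_1, n, \Lambda) > 0$; apply it with $y = 0$, $r = 1$, $X = (0,1) \in D(0,1)$ and $\delta' = \delta_j$ (valid once $\delta_j \le \delta/10$). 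For all $j$ large enough so that also $\epsilon_j < \epsilon_1$, we then get
\[
\Phi_j \;\le\; (1+C\delta^\mu)(1+\beta_1)\,\Phi(A_0^*, \delta) \;\le\; (1+\beta_1)^3 \;<\; 1+\beta',
\]
contradicting the standing assumption $\Phi_j > 1+\beta'$.

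The main obstacle, modest as it is, is twofold: extracting the right (pole-normalized) statement from Corollary \ref{Messcor.cor} so that the pole $(0,1)$ and scales $(r,\delta')=(1,\delta_j)$ fit its hypotheses, and confirming that pointwise continuity plus positivity of $k_{A_0^*}^{(0,1)}$ at the origin indeed forces $\Phi(A_0^*, \delta) \to 1$. Both are direct once set up; the rest of the argument is bookkeeping around the compactness and the three-factor estimate above.
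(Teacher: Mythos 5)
Your argument is correct and is essentially the paper's proof, recast via sequential compactness rather than a finite open subcover. Both proofs rest on the same three ingredients: compactness of the set of real $\Lambda$-elliptic constant matrices, smoothness and positivity of the constant-coefficient Poisson kernel from \eqref{eq:formulakA0} (giving $\Phi(A_0,\delta)\to 1$ as $\delta\to 0$ for any fixed $A_0$), and the perturbation estimate of Corollary \ref{Messcor.cor}. The one notable difference is cosmetic but pleasant: the paper covers the compact set of $\Lambda$-elliptic matrices by balls $B_\infty(A_0,\tilde\epsilon_1(A_0))$ and extracts a finite subcover, which forces it to enlarge the ellipticity parameter to $\Lambda'=2\Lambda$ and cap $\tilde\epsilon_1<(5\Lambda)^{-1}$ so that the perturbed matrices stay elliptic; your contradiction/subsequence version sidesteps this entirely because the limit $A_0^*$ is automatically $\Lambda$-elliptic, so Corollary \ref{Messcor.cor} applies with the original $\Lambda$. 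Otherwise the bookkeeping (choice of $\beta_1$ with $(1+\beta_1)^3<1+\beta'$, fixing $\delta$ before letting $j\to\infty$, checking $(0,1)\in D(0,1)$ and $\delta_j\le\delta/10$) is carried out correctly.
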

\begin{proof} Let $\beta' \in (0,1)$ and set $\Lambda' = 2\Lambda$. Let $\beta \in (0,1)$ and $\delta_2 \in (0,\delta_0)$ be such that
\[(1+ C\delta_2^{\mu})(1+ \beta)^2 \leq 1 + \beta',\]
where $\delta_0 = \delta_0(n,\Lambda')$ is as in Lemma \ref{goodpolechange.lem} for $\Lambda'$. By \eqref{eq:formulakA0}, for every real, constant $A_0$ satisfying the $\Lambda$-ellipticity condition we have that $k_{A_0}^{(0,1)}(z)$ is a smooth function in $z$. Smoothness, combined with the non-degeneracy \eqref{eq:ndeg1}, yields that there exists $\tilde\delta=\tilde\delta(A_0)$, which we may assume is less than $\delta_2$, such that
\[\frac{\left(\fint_{\Delta(0,\tilde\delta)} \left[k_{A_0}^{(0,1)}(z)\right]^2 \, dz \right)^{1/2}}{\fint_{\Delta(0,\tilde\delta)} k_{A_0}^{(0,1)}(z) \, dz } \le 1 + \beta.\]
Thus, the estimate \eqref{Messcor.eq2} in Corollary \ref{Messcor.cor} implies that there exists\footnote{Note that, in particular, $\tilde\epsilon_1 = \tilde\epsilon_1(A_0)$ since $\tilde\delta$ (momentarily) depends on $A_0$.} $\tilde \epsilon_1 = \tilde\epsilon_1(\tilde\delta, \beta, n, \Lambda')< (5\Lambda)^{-1}$ such that if $A_0'$ is a real, constant coefficient matrix the $\Lambda'$-ellipticity condition and $\|A_0 - A_0'\|_{L^\infty} < \tilde\epsilon_1$
\begin{align*}
\frac{\left(\fint_{\Delta(0,\delta)} \left[k_{A_0'}^{(0,1)}(z)\right]^2 \, dz \right)^{1/2}}{\fint_{\Delta(0,\delta)} k_{A_0'}^{(0,1)}(z) \, dz } &\le (1 + C\tilde{\delta}^\mu)(1 + \beta) \frac{\left(\fint_{\Delta(0,\tilde\delta)} \left[k_{A_0}^{(0,1)}(z)\right]^2 \, dz \right)^{1/2}}{\fint_{\Delta(0,\tilde\delta)} k_{A_0}^{(0,1)}(z) \, dz } 
\\ &\le 1 + \beta',
\end{align*}
for all $\delta \in (0, \tilde\delta/10]$. 
On the other hand, the collection of balls of the form $B_\infty(A_0, \tilde\epsilon_1(A_0)) := \{ A_0' : \|A_0 - A_0'\|_{L^\infty} < \tilde\epsilon_1(A_0)\}$, as $A_0$ ranges over all real, constant-coefficient matrices, forms an open\footnote{This is why we employed the use of $\Lambda'$ and made the restriction $\tilde\epsilon_1 < (5\Lambda)^{-1}$.} cover of the set of all real, constant-coefficient matrices satisfying the $\Lambda$-ellipticity condition (in the $L^\infty$-metric), which is a compact set. We may then extract a finite sub-cover $\{B_i\} = \{B(A_0^i,\tilde\epsilon_1(A_0^i))\}$ from which the conclusion of the theorem follows by letting $\delta_1 := \min_{i} \tilde\delta(A_0^i)/10$.
\end{proof}

Now we are ready to conclude our treatment of perturbations of constant-coefficient operators. We are (finally) able to say, quantitatively, that sufficient proximity to a constant-coefficient operator in the sense of \eqref{def:pert} controls the $B_2$ constant at small scales.

\textit{Proof of Theorem \ref{UHSperturbfinal.thrm}.} Let $\beta' \in (0,1)$ be such that $(1 + \beta')^2 =1 + \widetilde\beta/2$. Next, we choose $\delta \in (0,\delta_1)$, where $\delta_1 =\delta_1(\beta', n, \Lambda)$ is from Lemma \ref{constantcoeffoptimal.thrm}, small enough so that
\[ (1 + C\delta^\mu)(1 + \beta')^2 \le 1 + \widetilde\beta.\]
We apply Corollary \ref{Messcor.cor} with $\beta = \beta'$ and $\delta$ as above.
It follows that if $\epsilon = \epsilon_1(\delta, \beta', n, \Lambda) > 0$ is as in Corollary \ref{Messcor.cor} (with $\beta = \beta'$) we have for all $A$ satisfying the hypothesis of the theorem 
\begin{align*} \frac{\left(\fint_{\Delta(y, \delta'r)} \left[k_A^X(z)\right]^2 \, dz \right)^{1/2}}{\fint_{\Delta(y,\delta' r)} k_A^X(z) \, dz }&\le (1 + C\delta^\mu)(1 + \beta') \frac{\left(\fint_{\Delta(y,\delta)} \left[k_{A_0}^{(0,1)}(z)\right]^2 \, dz \right)^{1/2}}{\fint_{\Delta(0,\delta)} k_{A_0}^{(0,1)}(z) \, dz } 
\\& \le 1 + \widetilde\beta,
\end{align*}
for all $y\in \rn$, $r > 0$, $\delta' \in (0, \delta/10]$ and $X \in D(x,r)$, where we use that $\delta\in (0,\delta_1)$ and Lemma \ref{constantcoeffoptimal.thrm} in the second line. Setting $\tilde\delta = \delta/10$ we obtain the conclusion of the theorem.

\section{Operators with H\"older coefficients in Lipschitz domains}
In this section, we modify H\"older-continuous coefficient matrices (in Lipschitz domains) so that they are small perturbations of constant coefficient matrices, and this will allow us to use the results of Section \ref{sec:pert}. We will rely on the ``flat" transformation between Lipschitz domains and the upper half space. The relevant computations are standard, so we include them in Appendix \ref{pushpull.sect}. But we suggest reader to first read the notation and statements in Appendix \ref{pushpull.sect}, since we rely heavily on them in this section. For instance, we often employ the `flattening map' $\Phi$ defined in Appendix \ref{pushpull.sect}. 

In what follows, if $\tau > 0$ we let $Q_\tau$ be the open $n$-dimensional cube centered at zero with side length $2\tau$, that is, $Q_\tau = \{x \in \rn : |x|_\infty < \tau\}$. 
\begin{lemma}\label{Qtaulocal.lem} Let $\varphi : \rn \to \re$, $\varphi(0) = 0$ be a Lipschitz function with Lipschitz constant $\gamma \le \frac{1}{50n}$. Suppose $A$ is a $\Lambda$-elliptic matrix satisfying the H\"older condition \eqref{cond:Holder}.
Then for $\tau > 0$ the matrix-valued function\footnote{We remind the reader that $R_{Q_\tau, \varphi}$ is the $\varphi$-adapted Carleson box, defined in \eqref{def:Cbox}.}
\[A_{\tau,\varphi} (x,t) := 
\begin{cases}
A(0) & \text{ if } x \not\in Q_\tau
\\ A(x,t) & \text{ if } (x,t) \in R_{Q_\tau, \varphi}
\\ A(x,\varphi(x)) & \text{ otherwise}
\end{cases}
\]
has the decomposition
\[A_{\tau,\varphi}(x,t) = A_1(x) + B(x,t),\]
with 
\begin{equation}\label{Qtaulocalestimatemain.eq}
\| A_1 - A(0) \|_{L^\infty(\rn)} + \|B\|_{\C_\varphi} < C_1\tau^\alpha,
\end{equation}
where $C_1$ depends on $C_A, n$ and $\alpha$. Here 
\[ A_1(x) := A(x,\varphi(x))\mathbbm{1}_{Q_\tau}(x) + A(0)(1 - \mathbbm{1}_{Q_\tau}(x)), \text{ and } B(x,t) :=A_{\tau,\varphi}(x,t) - A_1(x). \]

In particular, $\widetilde{A} = J_\Phi^T(A_{\tau,\varphi}\circ \Phi^{-1})J_\Phi$ is a real, $4\Lambda$-elliptic matrix with the decomposition
\[\widetilde{A}(x,t) = \widetilde{A}_1(x) + \widetilde{B}(x,t),\]
satsifying
\[\| \widetilde{A}_1 - A'_0 \|_{L^\infty(\rn)} + \|\widetilde{B}\|_{\C} < 2C_1\tau^\alpha + 4\sqrt{n}\gamma\Lambda,\]
where $\widetilde{A}_1 := J^T_\Phi(A_1 \circ \Phi^{-1})J_\Phi = J^T_\Phi(A_1)J_\Phi$, $\widetilde{B} := J^T_\Phi(B \circ \Phi^{-1})J_\Phi$ and
$A'_0 := J^T_\Phi(0)A(0)J_\Phi(0)$.
\end{lemma}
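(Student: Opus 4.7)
The argument splits naturally into two stages matching the two claims in the statement. For the first, the plan is to verify the decomposition $A_{\tau,\varphi} = A_1 + B$ by direct case analysis. Observe that $A_1(x) - A(0) = \bigl(A(x,\varphi(x)) - A(0)\bigr)\mathbbm{1}_{Q_\tau}(x)$, so the H\"older hypothesis \eqref{cond:Holder}, combined with $|x| \le \sqrt{n}\tau$ and $|\varphi(x)| \le \gamma|x|$ on $Q_\tau$, immediately yields $\|A_1 - A(0)\|_{L^\infty(\rn)} \lesssim C_A \tau^\alpha$. A case-by-case check shows $B$ vanishes outside $R_{Q_\tau,\varphi}$ and equals $A(x,t) - A(x,\varphi(x))$ inside, so $|B(x,t)| \le C_A |t-\varphi(x)|^\alpha$, and this pointwise bound persists after the supremum over a $\varphi$-adapted Whitney region since $s - \varphi(y) \approx t - \varphi(x)$ there.

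To bound $\|B\|_{\C_\varphi}$ I would fix a cube $Q$ and compute
\[ \frac{1}{|Q|}\iint_{R_{Q,\varphi}} \|B\|^2_{L^\infty}\, \frac{dx\, dt}{t-\varphi(x)} \;\lesssim\; \frac{C_A^2}{|Q|}\iint_{R_{Q,\varphi}} (t-\varphi(x))^{2\alpha - 1}\, dx\, dt \;\approx\; C_A^2 \ell(Q)^{2\alpha}, \]
then split into cases. When $\ell(Q) \le \tau$ the bound is immediate; when $\ell(Q) > \tau$ the support of $B$ confines the effective integration region to a set of volume $\lesssim \tau^{n+1}$, and the denominator $|Q| \gtrsim \tau^n$ absorbs the excess, again giving $\lesssim C_A^2\tau^{2\alpha}$. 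The only point of care here is the ``boundary layer'' where a Whitney region straddles $\partial R_{Q_\tau,\varphi}$, which I would resolve by trivially enlarging the support before integrating; this is where I would expect the bookkeeping to be most delicate, though no genuinely new idea is needed.

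For the second stage I invoke the change-of-variables package from Appendix \ref{pushpull.sect}. Smallness of $\gamma$ forces $\|J_\Phi\|, \|J_\Phi^{-1}\| \le 2$, so $4\Lambda$-ellipticity of $\widetilde{A}$ is automatic. I would decompose
\[ \widetilde{A}_1(x) - A_0' = J_\Phi^T(x)\bigl(A_1(x) - A(0)\bigr)J_\Phi(x) + \bigl(J_\Phi^T(x) - J_\Phi^T(0)\bigr)A(0)J_\Phi(x) + J_\Phi^T(0)A(0)\bigl(J_\Phi(x) - J_\Phi(0)\bigr), \]
bounding the first summand by $\|J_\Phi\|^2\,\|A_1 - A(0)\|_\infty \lesssim C_1\tau^\alpha$ and the other two by $\|J_\Phi - J_\Phi(0)\|_\infty \cdot \|A(0)\| \cdot \|J_\Phi\|_\infty \lesssim \sqrt{n}\gamma\Lambda$, since the nonzero entries of $J_\Phi - J_\Phi(0)$ come from $\nabla\varphi$ with $\|\nabla\varphi\|_\infty \le \gamma$. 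The Carleson norm of $\widetilde{B}$ transfers from the adapted side via the pull-back invariance recorded in the appendix, so $\|\widetilde{B}\|_{\C} \lesssim \|J_\Phi\|^2\,\|B\|_{\C_\varphi} \lesssim C_1\tau^\alpha$. Summing the $L^\infty$ and Carleson contributions and re-absorbing numerical factors into $C_1$ yields the stated $2C_1\tau^\alpha + 4\sqrt{n}\gamma\Lambda$ bound.
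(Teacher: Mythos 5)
Your proposal is correct and follows essentially the same route as the paper: the pointwise H\"older bound on $B$ restricted to $R_{Q_\tau,\varphi}$, the Carleson integral bounded via $t^{2\alpha-1}$ truncated at scale $\tau$ (your $\ell(Q)\le\tau$ / $\ell(Q)>\tau$ case split is just a more verbose version of the paper's one-line $\min(\ell(Q),4\tau)$ truncation), and for the pull-back a decomposition of $\widetilde{A}_1 - A_0'$ that is algebraically a regrouping of the one in Proposition~\ref{genopperturbPB.prop}, which the paper simply cites. The only cosmetic difference is that the paper outsources the second stage entirely to Proposition~\ref{genopperturbPB.prop} rather than re-deriving the Jacobian estimates inline.
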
 
\begin{proof}
Fix $\tau > 0$. The second statement concerning $\widetilde A$, its decomposition and the corresponding bounds follows immediately from Proposition \ref{genopperturbPB.prop} and \eqref{Qtaulocalestimatemain.eq}.

To prove the estimate \eqref{Qtaulocalestimatemain.eq}, we begin with the $L^\infty$ estimate for $A_1(x) - A_0$. For $x \in Q_\tau$ 
\[|A_1(x) - A(0)| = |A(x,\varphi(x)) - A(0)| \le C_A |(x,\varphi(x))|^\alpha \lesssim \tau^\alpha,\]
where the implicit constant depends on $C_A$, $n$ and $\alpha$. 
Since $A_1(x) = A(0)$ for $x \not\in Q_\tau$, we have obtained the estimate $\|A_1 - A(0)\|_{L^\infty(\rn)} < C\tau^\alpha$.

We are left with estimating the $\varphi$-adapted Carleson norm of $B : = A_{\tau,\varphi} - A_1$. For any $X \in R_{Q_\tau,\varphi}$, we write $X=(x, \varphi(x)+t) \in \Omega_\varphi$ and $\hat X = (x, \varphi(x))$. Hence 
\[|B(X)| = |A_{\tau,\varphi}(X) - A_1(X)| = |A(X) - A(\widehat{X})| \le C_At^{\alpha}.\]
Since $B(X) \equiv 0$ in $(R_{Q_\tau,\varphi})^c$ it follows that for any cube $Q \subset \rn$
\begin{align*}
\|B\|_{\C_\varphi} &= \sup_{Q \subset \rn} \left( \frac{1}{|Q|} \iint_{R_{Q,\varphi}} \| B\|_{L^\infty(W_\varphi(y,s))}^2 \, \frac{dy\,ds}{s - \varphi(y)} \right)^{1/2} \\ 
& = \sup_{Q \subset \rn} \left( \frac{1}{|Q|} \int_0^{\ell(Q)} \int_Q \| B\circ \Phi^{-1} \|_{L^\infty(W_(x,t))}^2 \, \frac{dx\,dt}{t} \right)^{1/2} \\
& \le C \sup_{Q\subset \RR^n} \left( \int_0^{\min(\ell(Q),4\tau)} t^{2\alpha -1} \, dt \right)^{1/2} \le C \tau^\alpha,
\end{align*}
where we used the flattening change of variables in the second line. Combining our estimates for $\|A_1 - A(0)\|_{L^\infty(\rn)}$ and $\|B\|_{\C_\varphi}$ we obtain \eqref{Qtaulocalestimatemain.eq}.
\end{proof}

The next Proposition says that the modified coefficient matrix enjoys an almost optimal reverse-H\"older estimate.

\begin{proposition}\label{mainlocalization.prop}
Let $\beta>0$. There exist $\delta_\beta, \gamma_\beta, \tau_\beta>0$ depending on $\beta$ and allowable constants such that the following holds. 

Assume $A$ is a real, $\Lambda$-elliptic matrix-valued function satisfying the H\"older condition \eqref{cond:Holder}, and $\varphi : \rn \to \re$, $\varphi(0) = 0$, is Lipschitz with $\|\nabla \varphi\|_\infty \le \gamma_\beta$. Then for all $\tau\in (0,\tau_\beta)$, the Poisson kernel (denoted by $h_{\tau,\varphi}^X$) for $L_{\tau, \varphi} = -\div A_{\tau, \varphi}\nabla$\footnote{See Lemma \ref{Qtaulocal.lem}.} and the domain $\om_\varphi$ with pole $X \in \om_\varphi$ satisfies
\begin{equation}\label{localizedkerneloptest.eq}
\frac{\left(\fint_{\Delta_\varphi(y, \delta r)} (h^X_{\tau, \varphi})^2\, d\sigma\right)^{1/2}}{\fint_{\Delta_\varphi(y, \delta r)} h^X_{\tau, \varphi}\, d\sigma} \le 1 + \beta,
\end{equation}
for all $y\in \RR^n, r>0, \delta\in (0, \delta_\beta)$ and $X\in \Phi^{-1}(D(y,r))$. Here $\sigma$ is the surface measure to $\Gr(\varphi) = \{(x, \varphi(x)): x \in \rn\}$, 
$\Delta_\varphi(y, \delta r) := B((y, \varphi(y)), \delta r) \cap \Gr(\varphi)$ is the surface ball, and $\Phi$ is the flattening map for $\varphi$. 
\end{proposition}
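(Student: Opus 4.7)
The plan is to pull back to the upper half space via the flattening map $\Phi$, invoke Theorem \ref{UHSperturbfinal.thrm} on the push-forward operator, and transfer the resulting $B_2$ estimate back to the graph. By Lemma \ref{Qtaulocal.lem}, the pulled-back matrix $\widetilde A := J_\Phi^T(A_{\tau,\varphi}\circ\Phi^{-1})J_\Phi$ is real, $4\Lambda$-elliptic, and an $\epsilon$-perturbation of the constant-coefficient matrix $A_0' := J_\Phi^T(0)A(0)J_\Phi(0)$ with $\epsilon \le 2C_1\tau^\alpha + 4\sqrt{n}\gamma\Lambda$. Since $\Phi$ is a bi-Lipschitz bijection conjugating $L_{\tau,\varphi}$ on $\om_\varphi$ with $\widetilde L := -\div \widetilde A\nabla$ on $\reu$, a change of variables $y=(x,\varphi(x))$ with $d\sigma(y)=\sqrt{1+|\nabla\varphi(x)|^2}\,dx$ in the elliptic-measure representations yields the pointwise identity
\[ h^X_{\tau,\varphi}(x,\varphi(x)) \;=\; \frac{k^{\Phi(X)}_{\widetilde A}(x)}{\sqrt{1+|\nabla\varphi(x)|^2}}, \qquad x \in \rn. \]

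Next, I would compare the $B_2$ quotient on the graph-ball $\Delta_\varphi(y,\delta'r)$ with a $B_2$ quotient on $\rn$. Writing $E := \{x \in \rn : (x,\varphi(x)) \in \Delta_\varphi(y,\delta'r)\}$, one has $\Delta(y,\delta'r/\sqrt{1+\gamma^2}) \subset E \subset \Delta(y,\delta'r) =: \Delta_+$ and $|\Delta_+\setminus E| \lesssim \gamma^2|\Delta_+|$. Inserting the kernel identity into the $B_2$ quotient on $\Delta_\varphi(y,\delta'r)$ and using $1\le\sqrt{1+|\nabla\varphi|^2}\le 1+\gamma^2$ reduces the estimate, up to a Jacobian factor in $[1,\,1+C\gamma^2]$, to the $B_2$ quotient of $k := k^{\Phi(X)}_{\widetilde A}$ on the set $E$.

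Because $E$ is not a Euclidean ball, Theorem \ref{UHSperturbfinal.thrm} does not apply to $E$ directly. To bridge this gap I would use a Cauchy--Schwarz argument: $\int_{\Delta_+\setminus E} k \le |\Delta_+\setminus E|^{1/2}\|k\|_{L^2(\Delta_+)}$, combined with the near-optimal $B_2$ bound on $\Delta_+$ itself (furnished by Theorem \ref{UHSperturbfinal.thrm}) to replace $\|k\|_{L^2(\Delta_+)}$ by $(1+\tilde\beta)\|k\|_{L^1(\Delta_+)}/|\Delta_+|^{1/2}$. Together with the trivial upper bound $\int_E k^2 \le \int_{\Delta_+} k^2$ and $|E| \ge (1+\gamma^2)^{-n/2}|\Delta_+|$, this yields $B_2(E,k) \le (1+C\gamma)\, B_2(\Delta_+,k)$ with $C$ allowable.

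To close the argument, given $\beta > 0$ I would choose $\tilde\beta$, $\gamma_\beta$, and $\tau_\beta$ in that order so that $(1+C\gamma_\beta^2)(1+C\gamma_\beta)(1+\tilde\beta) \le 1+\beta$ and the perturbation size $2C_1\tau_\beta^\alpha + 4\sqrt{n}\gamma_\beta\Lambda$ lies below the threshold $\epsilon_0 = \epsilon_0(\tilde\beta,n,4\Lambda)$ of Theorem \ref{UHSperturbfinal.thrm}. Setting $\delta_\beta := \tilde\delta(\tilde\beta,n,4\Lambda)$ and noting that $X \in \Phi^{-1}(D(y,r))$ is exactly the pole hypothesis $\Phi(X) \in D(y,r)$ of the theorem then produces \eqref{localizedkerneloptest.eq}. \textbf{The main obstacle} is the Cauchy--Schwarz transfer from $\Delta_+$ to $E$ in paragraph three: it is precisely the near-optimality of the $B_2$ constant on the Euclidean ball that lets us absorb the small symmetric-difference error $|\Delta_+\setminus E|\lesssim\gamma^2|\Delta_+|$ into a multiplicative factor close to $1$, since a naive $L^1$ comparison (e.g.\ using only doubling) would give a multiplicative constant bounded away from $1$. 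The rest is parameter bookkeeping.
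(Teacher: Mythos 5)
Your proposal is correct and takes essentially the same route as the paper: pull back via $\Phi$, invoke Lemma \ref{Qtaulocal.lem} together with Theorem \ref{UHSperturbfinal.thrm} on the pushed-forward operator, and transfer the near-optimal $B_2$ bound from the Euclidean ball to the projected set using Cauchy--Schwarz plus the roughly-bounded reverse H\"older constant to absorb the symmetric-difference error. The paper packages the same Cauchy--Schwarz absorption step as Claim \ref{ssprimeavgest.cl} (comparing $\omega_{\widetilde A}$-mass of two concentric Euclidean balls sandwiching the projection $P(\Delta_\varphi)$ via \eqref{projectioninclusion.eq}), rather than comparing $B_2$ quotients on $\Delta_+$ and $E$ directly, but this is a cosmetic difference; the parameter bookkeeping you outline matches the paper's order of choices.
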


\begin{proof} 
Let $\widetilde{\beta} = \min\{1, \beta/2\}$ and let $k_{\widetilde A}^Z$ be the ``pulled back" Poisson kernel, that is, $k_{\widetilde A}^Z$ is the Poisson kernel in the upper half space for $\widetilde L= -\div \widetilde A\nabla$, where $\widetilde A  = J_\Phi^T (A_{\tau, \varphi} \circ \Phi^{-1}) J_\Phi$. Using Lemma \ref{Qtaulocal.lem}, there exists $A'_0$, a real, constant $4\Lambda$-elliptic matrix such that $\widetilde A$ has the decomposition
\[\widetilde A(x,t) = \widetilde A_1(x) + \widetilde B(x,t)\]
and 
\[\|\widetilde A - A'_0\|_{L^\infty} + \|\widetilde B\|_{\C} \le 2 C_1\tau_\beta^\alpha + 4\sqrt{n}\gamma_\beta \Lambda.\]
In particular we may initially choose $\tau_\beta$ and $\gamma_\beta$ small depending on $\tilde \beta$ and allowable constants, to guarantee that $\widetilde A$ is an $\epsilon(\tilde\beta)$-perturbation of the constant-coefficient matrix $A'_0$ (as in \eqref{eq:decompA}, \eqref{def:pert}), so that by Theorem \ref{UHSperturbfinal.thrm} we have
\begin{equation}\label{smallnessforpullback.eq}
\frac{\left(\fint_{\Delta(y, \delta'r)} \left[k_{\widetilde{A}}^Z(x')\right]^2 \, dx' \right)^{1/2}}{\fint_{\Delta(y,\delta' r)} k_{\widetilde{A}}^Z(x') \, dx'} < 1 + \widetilde\beta \le 2
\end{equation}
for any $y\in \RR^n, r>0, \delta'\in (0, \tilde\delta)$ and $Z\in D(y,r)$.
The constant $\tilde\delta= \tilde\delta(\tilde\beta, n, \Lambda)$ was determined in Theorem \ref{UHSperturbfinal.thrm}.
We now fix $\tau_\beta$ and set $\delta_\beta = \tilde \delta$, but we will further restrict $\gamma_\beta$ in order to control the errors coming from the flattening change of variables. 

Fix $r > 0$, $\delta \in (0, \delta_\beta)$, $y \in \rn$ and $X \in \Phi^{-1}(D(y,r))$ and set $Z = \Phi(X) \in D(y,r)$. Let $P:\ree \to \rn$ be the projection operator, that is, $P(x,t) := x$ for all $(x,t) \in \ree$. By Proposition \ref{Poissonkernelofpullback.prop} and \eqref{smallnessforpullback.eq}
\begin{equation}\label{initialhestimate.eq}
\frac{\left(\fint_{\Delta_\varphi(y, \delta r)} (h^X_{\tau, \varphi})^2\, d\sigma\right)^{1/2}}{\fint_{\Delta_\varphi(y, \delta r)} (h^X_{\tau, \varphi})^2\, d\sigma} \le 
\frac{\HH^n(\Delta_\varphi(y, \delta r))}{\left| P(\Delta_\varphi(y, \delta r)) \right|}   \frac{\left(\fint_{P(\Delta_\varphi(y, \delta r))} \left[k_{\widetilde{A}}^Z(x')\right]^2 \, dx' \right)^{1/2}}{\fint_{P(\Delta_\varphi(y, \delta r))} k_{\widetilde{A}}^Z(x') \, dx'}, 
\end{equation}
where the error term comes from changing the averages. Let us now make the simple observation that for $x_0 \in \rn$ and $r_0 > 0$
\begin{equation}\label{projectioninclusion.eq}
\Delta(x_0, r_0(1+ \|\nabla \varphi\|_\infty^2)^{-1/2}) \subseteq P(\Delta_\varphi(x_0, r_0)) \subseteq \Delta(x_0,  r_0)),
\end{equation}
where the second inclusion is obvious and the first, similarly, is a consequence of the Pythagorean theorem. Indeed, if $x \in \Delta(x_0, r_0(1+ \|\nabla \varphi\|_\infty^2)^{-1/2})$ then 
\[|(x, \varphi(x)) - (x_0, \varphi(x_0))|^2 \le |x - x_0|^2 + \| \nabla \varphi\|_\infty^2 |x - x_0|^2 < r_0.\]
The inclusions in \eqref{projectioninclusion.eq} give the estimate 
\[\frac{\left| \Delta(y, \delta r) \right|}{\HH^n(P(\Delta_\varphi(y, \delta r)))} \le (\delta r)^n\left(\frac{\delta r}{\sqrt{1+ \|\nabla \varphi\|_\infty^2}}\right)^{-n} = (1+ \|\nabla \varphi\|_\infty^2)^{-n/2}.\]
The above estimate and \eqref{projectioninclusion.eq} yield
\begin{equation}\label{kpullbacknumest.eq}
\left(\fint_{P(\Delta_\varphi(y, \delta r))} \left[k_{\widetilde{A}}^Z(x')\right]^2 \, dx' \right)^{1/2} \le (1 + \|\nabla \varphi\|_\infty^2)^{n/4} \left(\fint_{\Delta(y, \delta r)} \left[k_{\widetilde{A}}^Z(x')\right]^2 \, dx' \right)^{1/2}.
\end{equation}
Again using \eqref{projectioninclusion.eq} we have 
\begin{equation}\label{kpullbackdenomest.eq}
\begin{split}
\fint_{P(\Delta_\varphi(y, \delta r))} k_{\widetilde{A}}^Z(x') \, dx' &\ge  \frac{\omega^Z_{\widetilde{A}}(\Delta(y, \delta r(1+ \|\nabla \varphi\|_\infty^2)^{-1/2}))}{\left| P(\Delta_\varphi(y, \delta r)) \right|}
\\& \ge \frac{\omega^Z_{\widetilde{A}}(\Delta(y, \delta r(1+ \|\nabla \varphi\|_\infty^2)^{-1/2}))}{\omega^Z_{\widetilde{A}}(\Delta(y, \delta r))} \fint_{\Delta(y, \delta r)} k_{\widetilde{A}}^Z(x') \, dx',
\end{split}
\end{equation}
where $\omega^Z_{\widetilde{A}} = k_{\widetilde A}^Z \, dx'$ is the elliptic measure for $\widetilde{L}$ on $\ree_+$.

Combining \eqref{initialhestimate.eq}, \eqref{kpullbacknumest.eq} and \eqref{kpullbackdenomest.eq} with the estimate 
\[H^n(\Delta_\varphi(y, \delta r)) \le \sqrt{1 + \|\nabla\varphi\|_\infty^2} \left| P(\Delta_\varphi(y, \delta r)) \right|\]
we have
\begin{equation}\label{secondtolasthest.eq}
\frac{\left(\fint_{\Delta_\varphi(y, \delta r)} (h^X_{\tau, \varphi})^2\, d\sigma\right)^{1/2}}{\fint_{\Delta_\varphi(y, \delta r)} (h^X_{\tau, \varphi})^2\, d\sigma} \le (1 + \widetilde\beta) (1 + \|\nabla\varphi\|_\infty^2)^{\frac{n+1}{4}} \frac{\omega^Z_{\widetilde{A}}(\Delta(y, \delta r))}{\omega^Z_{\widetilde{A}}(\Delta(y, \delta r(1+ \|\nabla \varphi\|_\infty^2)^{-1/2}))}
\end{equation}
where we used \eqref{smallnessforpullback.eq} to control the ratio of the averages of $k_{\widetilde A}^Z$ by $(1 + \widetilde\beta)$. Clearly we can make $(1 + \|\nabla\varphi\|_\infty^2)^{\frac{n+1}{4}}$ sufficiently close to one by choice of $\gamma_\beta$, so we need to handle the ratios of the elliptic measure. This can be done in a variety of ways, but we choose to do it directly with the following.
\begin{claim}\label{ssprimeavgest.cl} Let $s > 0$ and suppose $\mu = f \, dx$ for $f \ge 0$, $f \in L^1(\Delta(y,s))$ satisfies 
\begin{equation}\label{rh2mu.eq}
\left(\fint_{\Delta(y,s)} f^2 \, dx \right)^{1/2} \le 2 \fint_{\Delta(y,s)} f \, dx.
\end{equation}
Then for $s' \in ([1- (1/4)]^{1/n}s, s)$,  
\[\frac{\mu(\Delta_s)}{\mu(\Delta_{s'})} \le \frac{1}{1- 2\left(\frac{s^n - (s')^n}{s^n} \right)^{1/2}},\]
where $\Delta_{s'} := \Delta(y, s')$ and  $\Delta_s := \Delta(y, s)$.
\end{claim}
\begin{proof}[Proof of Claim \ref{ssprimeavgest.cl}]
The proof is a direct consequence of \eqref{rh2mu.eq} and H\"older's inequality. Indeed,
\begin{align*}
\mu(\Delta_s \setminus \Delta_{s'}) &= \int_{\Delta_s \setminus \Delta_{s'}} f \, dx \le H^n(\Delta_s \setminus \Delta_{s'})^{1/2} \left(\int_{\Delta_s \setminus \Delta_{s'}} f^2 \, dx\right)^{1/2}
\\& \le  \left| \Delta_s \setminus \Delta_{s'} \right|^{1/2} \left(\int_{\Delta_s} f^2 \, dx\right)^{1/2}
\\ &\le 2\left(\frac{\left| \Delta_s \setminus \Delta_{s'} \right|}{| \Delta_s |}\right)^{1/2}\mu(\Delta_s)
\\ &\le  2\left(\frac{s^n - (s')^n}{s^n} \right)^{1/2}\mu(\Delta_s)
\end{align*}
where we used \eqref{rh2mu.eq} in the second to last line. With this inequality in hand, we easily prove the claim by writing
\[\frac{\mu(\Delta_s)}{\mu(\Delta_{s'})} = 1 +  \frac{\mu(\Delta_s \setminus \Delta_{s'})}{\mu(\Delta_{s'})} \le 1 + 2\left(\frac{s^n - (s')^n}{s^n} \right)^{1/2} \frac{\mu(\Delta_s)}{\mu(\Delta_{s'})}. \]
\end{proof}

The estimate \eqref{smallnessforpullback.eq} allows us to apply Claim \ref{ssprimeavgest.cl} to the measure $\mu = \hm_{\widetilde{A}}^Z$, $s = \delta r$ and $s' = \delta r(1+ \|\nabla \varphi\|_\infty^2)^{-1/2}$, where $s'$ will satisfy the hypothesis of the claim by the smallness of $\gamma_\beta$. This yields the estimate 
\[\frac{\omega^Z_{\widetilde{A}}(\Delta(y, \delta r))}{\omega^Z_{\widetilde{A}}(\Delta(y, \delta r(1+ \|\nabla \varphi\|_\infty^2)^{-1/2}))}
\le \frac{1}{1 - 2\left(1 - (1 + \gamma_\beta^2)^{-n/2}\right)^{1/2}}.\]
This estimate in concert with \eqref{secondtolasthest.eq} and choice of $\gamma_\beta$ sufficiently small gives
\begin{align*}
\frac{\left(\fint_{\Delta_\varphi(y, \delta r)} (h^X_{\tau, \varphi})^2\, d\sigma\right)^{1/2}}{\fint_{\Delta_\varphi(y, \delta r)} (h^X_{\tau, \varphi})^2\, d\sigma} &\le (1 + \widetilde\beta) \frac{(1 +\gamma_\beta^2)^{\frac{n+1}{4}}}{1 - 2\left(1 - (1 + \gamma_\beta^2)^{-n/2}\right)^{1/2}}
\\ & \le (1 + \beta),
\end{align*}
where we recall that $1 + \widetilde \beta \le 1 + \beta/2$.
\end{proof}

\section{Proof of Theorem \ref{VCADcorr.thm}}

In this section we present the proof of Theorem \ref{VCADcorr.thm}. To begin, we require an `almost optimal' version of Lemma \ref{agreeingops.lem} (which is a comparison between elliptic measures of two operators who agree locally), after we impose sufficient flatness on the boundary and H\"older regularity on the coefficients. This almost-optimal comparison will allow us to transfer the almost-optimal reverse H\"older estimate in Proposition \ref{mainlocalization.prop} of the modified coefficient matrix to that of the original matrix, in local regions of sufficiently flat Lipschitz domains. We will also use this almost-optimal comparison to relate the Poisson kernel for a $(\delta, R)$-chord arc domain $\om$ and its localized domain $\widetilde\om(x_0,r)$, which can be approximated by Lipschitz domains from inside and outside. The notation used here can be found at the end of Section \ref{sec:sCAD}.

 Until we get to the proof of Theorem \ref{VCADcorr.thm} we largely follow \cite{MT}, tightening the presentation along the way. We begin with a lemma which ressembles \cite[Lemma 3.7]{MT}. 

\begin{lemma}\label{sharptransferPK.lem} Let $\om$ be a $(\delta,R)$-chord arc domain with $\delta < \delta_n$. There exists a constant $M'$ depending on the dimension so that the following holds. Let $M \ge M'$,  $0< s <   R/M$, $x_0 \in \pom$ and $\xi \in (-s/100, s/100)$ and set
\[\mathcal{C} := \mathcal{C}(x_0,Ms, \vec{n}_{x_0, Ms}, \xi), \]
defined in \eqref{adcyl.eq}, and $\om_1 :=\mathcal{C} \cap \om$.
Suppose that
\begin{itemize}
\item $\om_2$ is a chord arc domain satisfying $\om_2 \cap \mathcal{C}= \om_1$.
\item $A_1$ and $A_2$ are coefficient matrices defined on $\om_1$ and $\om_2$, respectively,  such that $A_1 = A_2$ on $\om_1$. 
\end{itemize}
For $i = 1,2$, let $\hm_i$ be the elliptic measure for the operator in $L_i :=- \div A_i \nabla$ in $\om_i$ and $G_i(X,Y)$ be the Green function for $L_i$ in $\om_1$.  If $X \in B(x_0, 10^{-8}(n+1)^{-1/2} Ms)$, it holds 
\begin{equation}\label{sharpmainaux.eq}
\frac{d\hm_1^X}{d\hm_2^X}(y) = \lim_{Y \to y} \frac{G_1(X,Y)}{G_2(X,Y)},
\end{equation}
for $\hm_2^X$ a.e. $y \in B(X, 100 \dist(X, \pom)) \cap \pom_2$\footnote{We remark that by the choice of $X$, the ball $B(X, 100 \dist(X, \partial\Omega))$ is well contained in the cylinder $\mathcal{C}$ and thus $B(X, 100 \dist(X, \partial\Omega)) \cap \partial\Omega_2 = B(X, 100 \dist(X, \partial\Omega)) \cap \partial\Omega = B(X, 100 \dist(X, \partial\Omega)) \cap \partial\Omega_1$.}, where the limit is taken within $\om_1$. 

Additionally, if $X_0 = x_0 + tMs \vec n_{x_0, Ms}$ for some $t \in \left[\tfrac{1}{4\sqrt{n+1}}, \tfrac{3}{4\sqrt{n+1}}\right]$ then
\begin{equation}\label{sharpmain1.eq}\frac{d\hm_1^{X_0}}{d\hm_2^{X_0}}(y) = \lim_{Y \to y} \frac{G_1(X_0,Y)}{G_2(X_0,Y)},
\end{equation}
for $\hm_2^X$ a.e. $y \in B(x_0, 10s) \cap \pom_2$. Moreover, for every $\epsilon > 0$ there exists $M'' > M'$ depending on $\epsilon$, $n$ and the chord arc constants of $\om_2$ such that if $M \geq M''$ it holds that 
\begin{equation}\label{sharpmain2.eq}
(1 + \epsilon)^{-1} \le \frac{d \hm_1^{X_0}}{d\hm_2^{X_0}}(y)\bigg/\frac{d \hm_1^{X_0}}{d\hm_2^{X_0}}(z) \le (1 + \epsilon)
\end{equation}
for $\hm_2^{X_0}$-a.e. $y,z \in B(x_0, 10s) \cap \partial\Omega_2$.
\end{lemma}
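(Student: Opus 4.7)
The plan is to establish the three claims in sequence, using the CFMS estimate (Lemma \ref{CFMS.lem}) and the H\"older continuity of quotients of positive solutions (Lemma \ref{QHC.lem}) as the principal tools.

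For \eqref{sharpmainaux.eq}, the mutual absolute continuity of $\hm_1^X$ and $\hm_2^X$ on the common boundary portion inside $B(X,100\dist(X,\pom))$ follows from Lemma \ref{agreeingops.lem}, since $L_1 = L_2$ and $\pom_1 = \pom_2$ in a neighborhood of this ball (which is contained in $\mathcal{C}$ by the choice of $X$). To identify the Radon--Nikodym derivative as a Green-function-ratio limit, I would apply CFMS in $\om_1$ and $\om_2$ separately and observe that the ratio
\[
\frac{\hm_1^X(\Delta(y,r))}{\hm_2^X(\Delta(y,r))} \cdot \frac{G_2(X,\A(y,r))}{G_1(X,\A(y,r))}
\]
tends to $1$ as $r \to 0$, because the local CFMS normalization is determined by $L$ and a neighborhood of $y$ in the boundary, both of which agree for the two domains. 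The left factor converges $\hm_2^X$-a.e.\ to $d\hm_1^X/d\hm_2^X(y)$ by Lebesgue differentiation (valid since $\hm_2^X$ is doubling by Lemma \ref{hmdoubling.lem}), while the right factor converges to $\lim_{Y \to y} G_1(X,Y)/G_2(X,Y)$, whose existence follows from Lemma \ref{QHC.lem} applied to $G_1(X,\cdot), G_2(X,\cdot)$ as non-negative $L^T$-solutions in $\om_1$ vanishing on the common boundary. Claim \eqref{sharpmain1.eq} is the specialization $X = X_0$; the choice of $t \in [1/(4\sqrt{n+1}),3/(4\sqrt{n+1})]$ ensures that $X_0$ is an interior corkscrew for $x_0$ at scale $\approx Ms$, so $B(x_0,10s) \subset B(X_0,100\dist(X_0,\pom))$ once $M \ge M'$ for a suitable dimensional $M'$.

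The heart of the lemma is \eqref{sharpmain2.eq}, which I would prove by exploiting the scale separation between the surface-ball radius $10s$ and the pole distance $\sim Ms$. Set $R := c_n Ms$ with $c_n$ a small dimensional constant chosen so that $B(x_0, 2R)$ lies well inside $\mathcal{C}$ and avoids $X_0$. Then $u := G_1(X_0,\cdot)$ and $v := G_2(X_0,\cdot)$ are non-negative $L^T$-solutions in $\om_1 \cap B(x_0,2R)$, both vanishing on $\pom \cap B(x_0,2R)$. Lemma \ref{QHC.lem} (in its localized form, valid here as explained below) gives
\[
\left| \frac{u(Y)}{v(Y)} - \frac{u(\A(x_0,R))}{v(\A(x_0,R))} \right|
\;\le\; C\left(\frac{|Y-x_0|}{R}\right)^{\mu}\frac{u(\A(x_0,R))}{v(\A(x_0,R))}
\;\le\; CM^{-\mu}\,\frac{u(\A(x_0,R))}{v(\A(x_0,R))}
\]
for all $Y \in B(x_0,10s)\cap\om_1$. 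Passing to the limit $Y \to y$ for $y \in B(x_0,10s)\cap\pom_2$ and using \eqref{sharpmain1.eq}, the same inequality holds with $d\hm_1^{X_0}/d\hm_2^{X_0}(y)$ in place of $u(Y)/v(Y)$. Writing this estimate at both $y$ and $z$ and dividing yields
\[
\frac{d\hm_1^{X_0}/d\hm_2^{X_0}(y)}{d\hm_1^{X_0}/d\hm_2^{X_0}(z)} \;\in\; \left[\frac{1-CM^{-\mu}}{1+CM^{-\mu}},\ \frac{1+CM^{-\mu}}{1-CM^{-\mu}}\right],
\]
and choosing $M''$ large enough that the right endpoint is at most $1+\epsilon$ gives \eqref{sharpmain2.eq}.

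The main obstacle is technical rather than conceptual: Lemma \ref{QHC.lem} as stated requires a (globally) chord arc domain, whereas $\om_1 = \mathcal{C} \cap \om$ has an artificial lateral boundary inherited from $\partial\mathcal{C}$. However, the region $B(x_0,2R)\cap\om_1$ sits at distance $\gtrsim R$ from this artificial part, so the interior corkscrews, Harnack chains, and the comparison principle needed in the proof of Lemma \ref{QHC.lem} can all be realized using only the chord arc structure of $\om$ itself. This is exactly the ``localized'' boundary-estimate regime discussed after \cite[Remark 4.2]{KT-Duke}, and verifying it reduces to checking that the constants $C,\mu$ in the H\"older bound depend only on $n$, $\Lambda$, and the chord arc constants of $\om_2$ (equivalently $\om$), independent of $M$ and $s$ — which is visible from the proof of Lemma \ref{QHC.lem} since those constants are scale-invariant.
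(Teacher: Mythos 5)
The proposal's treatment of \eqref{sharpmain2.eq} via Lemma \ref{QHC.lem} is essentially the same as the paper's, and your localization remarks about $\om_1$ not being a genuine chord arc domain are well taken. However, the claimed proof of \eqref{sharpmainaux.eq} has a genuine gap. You assert that the CFMS estimate shows
\[
E_r(y) := \frac{\hm_1^X(\Delta(y,r))}{\hm_2^X(\Delta(y,r))} \cdot \frac{G_2(X,\A(y,r))}{G_1(X,\A(y,r))} \ \longrightarrow\ 1 \quad \text{as } r\to 0,
\]
on the grounds that the CFMS ``normalization'' depends only on $L$ and the local geometry near $y$, which agree for $\om_1$ and $\om_2$. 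But CFMS (Lemma \ref{CFMS.lem}) is a two-sided \emph{comparison} $\hm_i^X(B(y,r)) \approx r^{n-1}G_i(X,\A(y,r))$ with an implicit constant that is \emph{not} known to be $1$, not known to be the same for $i=1,2$, and not known to have a limit as $r\to 0$; it is only uniformly bounded above and below. The elliptic measures $\hm_1^X$ and $\hm_2^X$ belong to different domains, so there is no a priori reason for those implicit constants to match in the limit; establishing exactly this asymptotic coincidence is the content of \eqref{sharpmainaux.eq}, and CFMS alone cannot supply it. In the limit argument you would only obtain $d\hm_1^X/d\hm_2^X(y) = c(y)\,\lim_{Y\to y}G_1(X,Y)/G_2(X,Y)$ for some unknown $c(y)\approx 1$.

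To close the gap one has to work harder, and this is what the paper does. The actual proof uses the Riesz representation \eqref{Rieszform1.eq} for $u_{y,r}^i(X)=\int \psi_{y,r}\,d\hm_i^X$, notes that $\Omega_1=\Omega_2$ and $A_1=A_2$ on $\supp\psi_{y,r}$, and then bounds $|u_r^1(X) - g(y)\,u_r^2(X)|$ by a boundary Caccioppoli estimate applied to $U(Z)=G_1(X,Z)-g(y)G_2(X,Z)$ (which vanishes at the relevant boundary portion), followed by the Carleson estimate, CFMS, and the H\"older estimate of Lemma \ref{QHC.lem} for $G_1/G_2$. This produces the quantitative bound
\[
\left|\frac{u_r^1(X)}{u_r^2(X)} - g(y)\right| \lesssim \left(\frac{r}{\dist(X,\pom)}\right)^\mu,
\]
which indeed vanishes as $r\to 0$, together with a differentiation-of-measures argument identifying $\lim_r u_r^1(X)/u_r^2(X)$ with the Radon--Nikodym derivative. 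No purely CFMS-based shortcut achieves this. Since \eqref{sharpmain1.eq} and \eqref{sharpmain2.eq} in your proposal rest on \eqref{sharpmainaux.eq}, the gap propagates to the whole argument.
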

\begin{proof}
Recall that the estimates in Section \ref{sec:sCAD} can be applied for the truncated domain ${\om_1}$ as if it were a chord arc domain itself, provided we work away from $\partial \mathcal{C}$. (See the paragraph before Definition \ref{BVMO.def}.) The constant $M'$ is simply to overcome the shift of the cylinder by $\xi$. First we prove \eqref{sharpmainaux.eq}; the proof for \eqref{sharpmain1.eq} is nearly identical and we omit it. 

Fix $X$ as in the lemma and set $d_X: = \dist(X, \pom)$. We recall the following Riesz formula (see \cite[Lemma 2.25]{HMT})
\begin{equation}\label{Rieszform1.eq}
\int_{\pom_i} \psi \hm_i^X - \psi(X) = -\iint_{\om_i} \langle A_i^T(Y) \nabla_Y G_i(X,Y), \nabla_Y \psi(Y)\rangle \, dY,
\end{equation}
for every $\psi \in C_c^\infty(\RR^{n+1})$ and $X \in \om_i$, $i = 1, 2$.
Applying Lemma \ref{QHC.lem} to the Green's functions $G_1(X, \cdot)$ and $G_2(X, \cdot)$, we have that the limit
\begin{equation}\label{eq:Gratio}
	g(y) := \lim_{Y \to y} \frac{G_1(X,Y)}{G_2(X,Y)} 
\end{equation} 
exists for $\omega_2^X$-a.e. $y \in B(X, 100 d_X) \cap \partial\Omega_2$.
On the other hand, similar to Lemma \ref{agreeingops.lem} (and using the Lebesgue differentiation theorem for Radon measures),
\[\frac{d \hm^X_1}{d\hm^X_2}(y):= \lim_{r \to 0^+} \frac{ \hm^X_1(B(y,r))}{\hm^X_2(B(y,r))}\]
exists for $\hm_2^X$-a.e. $y \in B(X, 100 d_X) \cap \pom_2$.

For $y \in B(X, 100 d_X) \cap \pom_2$ and $r \ll d_X$ we let $\psi_{y,r}(X) := \psi(|X-y|/r)$, where $\psi \in C^\infty_c((-2, 2))$ is radially decreasing with $\psi \equiv 1$ on $[-1, 1]$, so that $|\nabla \psi_{y,r}| \leq C/r$. Let $u_{y,r}^i$, $i = 1,2$, be the variational solution to the Dirichlet problem for $L_i$ in $\om_i$ with boundary data $\psi_{y,r}|_{\pom_i}$. A modification of the standard (`harmonic analysis') argument used to prove the Lebesgue differentiation theorem (for doubling measures) also shows for $\hm_2^X$-a.e. $y \in B(X, 100 d_X) \cap \pom_2$
\begin{equation}\label{uquotientsconvae.eq}
\frac{d \hm^X_1}{d\hm^X_2}(y) = \lim_{r \to 0^+} \frac{\int_{\pom_1} \psi_{y,r}(z) \, d\hm_1^X(z)}{\int_{\pom_2} \psi_{y,r}(z) \, d\hm_2^X(z)} = \lim_{r \to 0^+} \frac{u_{y,r}^1(X)}{u_{y,r}^2(X)}.
\end{equation}
Indeed, to make such an observation one should appeal to the techniques in \cite[Chapter 1]{Stein-HA} and \cite[Chapter 1]{Stein-SIOs} by building weighted maximal function out of the radial function $\psi(|X|)$ using the the fact that $\hm_2^X$ is doubling. More specifically, one can introduce the operator 
\[A_{\psi,r}h(x) := \frac{1}{\int_{\pom_2} \psi_{z,r}(z) \, d\hm_2^X(z)}\int_{\pom_2} \psi_{z,r}(z)h(z) \, d\hm_2^X(z), \]
and dominate it by an associated (local) maximal operator. Then note that, for sufficiently small $r$ the quotient inside the limit in the middle term of \eqref{uquotientsconvae.eq} is exactly $A_{\psi,r}h(y)$ with $h = \tfrac{d \hm^X_1}{d\hm^X_2}|_{B(X, 200 d_X) \cap \pom_2}$, which is an $\hm_2^X$-integrable function by Lemma \ref{agreeingops.lem}. Finally, use \cite[Chapter 1: Theorem 1]{Stein-HA}\footnote{Note that condition (iv) therein is explicitly stated in the proof as not necessary and an analog of the required covering lemma holds in our setting.} to provide the (local) weak-type bounds for the maximal operator and follow the ideas in \cite[Chapter 1]{Stein-SIOs}. We leave the details to interested readers.

Now fix $y \in B(X, 100 d_X) \cap \pom_2$ such that \eqref{eq:Gratio} and \eqref{uquotientsconvae.eq} hold. Denote $u^i_{r}:= u^i_{y,r}$ and $\psi_r(X) := \psi_{y,r}(X) = \psi(|X-y|/r)$. The fact that $r \ll d_X$ implies that $X \notin B(y,2r) = \supp \psi_r$ and $\Omega_1 = \Omega_2$ on $\supp \psi_r$. Hence by the Riesz formula \eqref{Rieszform1.eq}, we have 
\begin{equation}\label{Rieszform2.eq}
\begin{split}
u_{r}^i(X) = \int_{\pom_i} \psi_r ~\hm_i^X &= -\iint_{\om_i} \langle A_i^T(Y) \nabla_Y G_i(X,Y), \nabla_Y \psi_r(Y)\rangle \, dY
\\ &= -\iint_{\om_1} \langle A_1^T(Y) \nabla_Y G_i(X,Y), \nabla_Y \psi_r(Y)\rangle \, dY, \quad i = 1,2,
\end{split}
\end{equation}
where we used that  $A_1= A_2$ on $\om_1$ and $\Omega_2 = \Omega_1$ on $\supp \psi_r$.

Notice that by the CFMS estimates and Lemma \ref{agreeingops.lem}, we have $g(y) \approx 1$. We are going to show that 
\begin{equation}\label{differenceestquo.eq}
\left|u_{r}^1(X) - g(y) u_{r}^2(X)\right| \le C \left(\frac{r}{d_X}\right)^\mu g(y) u_{r}^2(X),
\end{equation}
where $\mu$ is the H\"older exponent as in Lemma \ref{QHC.lem}.
Using that $g(y) \approx 1$ we may divide by the positive constant $u_r^2(X)$ on both sides to obtain
\[\left|\frac{u_{r}^1(X)}{u_{r}^2(X)} - g(y) \right| \le C' \left(\frac{r}{d_X}\right)^\mu\]
and letting $r \to 0$ shows 
\[\lim_{r \to 0^+} \frac{u_{r}^1(X)}{u_{r}^2(X)}  = g(y).\]
Then recalling \eqref{eq:Gratio} and \eqref{uquotientsconvae.eq}, the desired equality \eqref{sharpmainaux.eq} follows. Therefore it suffices to show \eqref{differenceestquo.eq} and we do this now.

Using \eqref{Rieszform2.eq} and carefully noting that $g(y)$ is a {\it fixed scalar} since $y$ is fixed, we may use the boundedness of $A_1$ and the properties of $\psi_r$ to conclude
\begin{align}
&\left|u_{r}^1(X) - g(y) u_{r}^2(X)\right| \nonumber \\
& \quad = \left|\iint_{\om_1 \cap B(y,2r)} \left\langle A_1^T\nabla_Z \left[G_1(X,Z) - g(y) G_2(X,Z) \right],  \nabla_Z \psi_r(Z) \right\rangle \, dZ  \right| \nonumber
\\ &\quad \lesssim \left(\iint_{\om_1 \cap B(y,2r)} \left|\nabla_Z \left[G_1(X,Z) - g(y) G_2(X,Z) \right] \right|^2 dZ \right)^{1/2} \nonumber \\
& \qquad \qquad \times \left(\iint_{\om_1 \cap B(y,2r)}|\nabla_Z \psi_r|^2 dZ \right)^{1/2} \nonumber
\\ &\quad \lesssim r^{\frac{n-1}{2}} \left(\iint_{\om_1 \cap B(y,2r)} \left|\nabla_Z \left[G_1(X,Z) - g(y) G_2(X,Z) \right] \right|^2 dZ \right)^{1/2}.
\end{align}
Again noting that $g(y)$ is a fixed scalar, we may use that $U(Z) = G_1(X,Z) - g(y) G_2(X,Z)$ is a solution to $L_1^T U = 0$ in $\om_1 \cap B(y, 10r)$ which vanish on $\pom_1 \cap B(y, 10r)$ so that we may apply the boundary Caccioppoli inequality to the function $U$. It follows that
\begin{equation}\begin{split}
\left|u_{r}^1(X) - g(y) u_{r}^2(X)\right|  &\lesssim r^{n-1} \left(\fiint_{\om_1 \cap B(y,4r)}|G_1(X,Z) - g(y) G_2(X,Z)|^2 dZ\right)^{1/2} \\
& \lesssim r^{n-1} \left(\fiint_{\om_1 \cap B(y,4r)}\left|\frac{G_1(X,Z)}{G_2(X,Z)} - g(y)\right|^2 |G_2(X,Z)|^2 dZ \right)^{1/2} \\
& \lesssim r^{n-1} \sup_{ \om_1 \cap B(y,4r)}  \left|\frac{G_1(X,\cdot )}{G_2(X,\cdot )} - g(y)\right|  \sup_{\om_1 \cap B(y,4r)} G_2(X,\cdot).
\end{split}
\end{equation}
Next, we use Lemma \ref{QHC.lem}, the Carleson estimate (Lemma \ref{cscarlesonest.lem}) to obtain
\begin{align*}
\left|u_{r}^1(X) - g(y) u_{r}^2(X)\right| &\lesssim \left(\frac{r}{d_X}\right)^\mu g(y) r^{n-1} G_2(X,\mathcal{A}((y,r))
\\ & \lesssim \left(\frac{r}{d_X}\right)^\mu g(y) \hm^X_2(B(y,r))
\end{align*}
where we used the CFMS estimate (Lemma \ref{CFMS.lem}) in the second line. Finally, using the local doubling of $\hm^X_2$ we see that $u_r^2(X) \approx \hm^X_2(B(y,r))$, which along with the estimate above yields the estimate \eqref{differenceestquo.eq}. As we had reduced matters to proving \eqref{differenceestquo.eq}, this shows \eqref{sharpmainaux.eq}.

As remarked above, \eqref{sharpmain1.eq} has the same proof as \eqref{sharpmainaux.eq}. To obtain \eqref{sharpmain2.eq} we use  Lemma \ref{QHC.lem} to deduce that the function 
\[g(y) := \lim_{Y \to y} \frac{G_1(X_0,Y)}{G_2(X_0,Y)}\]
satisfies for $y, z \in B(x_0, 10s)\cap \pom$ 
\[|g(y) - g(z)| \lesssim \left(\frac{|y- z|}{d_{X_0}}\right)^\mu g(x_0) \lesssim \left(\frac{s}{Ms}\right)^\mu g(z),\]
where we denote $d_{X_0} := \dist(X_0, \partial\Omega)$.
Thus, \eqref{sharpmain2.eq} readily follows from \eqref{sharpmain1.eq} provided we choose $M''$ sufficiently large.
\end{proof}

\begin{remark}\label{observationforDKP.rmk}
Notice that Lemma \ref{sharptransferPK.lem}, in fact, shows that $\tfrac{d\hm_1^{X_0}}{d\hm_2^{X_0}}(\cdot)$ is locally H\"older continuous with quantitative estimates. This is simply a consequence of the fact that $\tfrac{d\hm_1^{X_0}}{d\hm_2^{X_0}}(y) = g(y)$, where $g(y)$ is as in the proof of Lemma \ref{sharptransferPK.lem}, and $g$ has these estimates by Lemma \ref{QHC.lem}.
\end{remark}

Combining the Lemma \ref{sharptransferPK.lem} with Proposition \ref{mainlocalization.prop} we obtain the following.
\begin{proposition}\label{Smalllipcyloptimal.lem}
Let $\epsilon \in (0,1)$. There exist positive constants $\gamma$, $\tau$ and $\widetilde M \ge M^{''}$ ($M^{''}$ is from Lemma \ref{sharptransferPK.lem}) depending on $\epsilon$ and allowable constants such that the following holds. 

Assume $L = -\div A \nabla$ is a divergence form elliptic operator with $\Lambda$-elliptic coefficients $A$ satisfying the H\"older condition \eqref{cond:Holder}, and $\varphi : \rn \to \re$, $\varphi(0) = 0$, is Lipschitz with $\|\nabla \varphi\|_\infty \le \gamma$. Then for all $M \ge \widetilde M$, $s \in (0, \tau/M)$ and $\xi \in [-s/200, s/200]$, the Poisson kernel, $h$, for the operator $L$ in the domain
\[\widetilde{\om}_{Ms,\xi} = \widetilde{\om}_{\varphi, Ms,\xi} : =\{(x,t): x \in \rn, t > \varphi(x)\} \cap \C(0, Ms, e_{n+1}, \xi)\]
satisfies
\begin{equation}\label{RH2optSmalllipcyl.eq}
\frac{\left(\fint_{B(y,r)}(h^{X})^2\, d\sigma(z)\right)^{1/2}}{\fint_{B(y,r)}h^{X}\, d\sigma(z)} \le (1 + \epsilon)^3,
\end{equation}
for all $y \in B(0, 5s) \cap \Gr(\varphi)$ and $r \in (0, 5s)$. Here $\sigma := \HH^n|_{\partial\widetilde{\om}_{Ms,\xi}}$ and $X = (\{0\}^n, Mts)$ for some $t \in \left[\tfrac{1}{4\sqrt{n+1}}, \tfrac{3}{4\sqrt{n+1}}\right]$.
\end{proposition}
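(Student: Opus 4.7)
The strategy is to compare the Poisson kernel $h$ for $L$ on the truncated domain $\widetilde{\om}_{Ms,\xi}$ with the Poisson kernel $h_{\tau,\varphi}^{X}$ for the modified operator $L_{\tau,\varphi}=-\div A_{\tau,\varphi}\nabla$ on the full graph domain $\om_\varphi$. Proposition~\ref{mainlocalization.prop} supplies an almost-optimal reverse Hölder estimate for $h_{\tau,\varphi}^{X}$ at sufficiently small scales, and Lemma~\ref{sharptransferPK.lem} transfers it to $h$ by asserting that the Radon–Nikodym derivative of the two elliptic measures is almost constant on the relevant surface ball. The combined multiplicative error should be $(1+\epsilon)^{3}$.

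\textbf{Parameter selection.} Given $\epsilon\in(0,1)$, fix $\beta\in(0,1)$ small enough that $(1+\beta)(1+\epsilon)^{2}\le(1+\epsilon)^{3}$. Apply Proposition~\ref{mainlocalization.prop} with this $\beta$ to obtain $\gamma_\beta,\tau_\beta,\delta_\beta$, and set $\gamma=\gamma_\beta$, $\tau=\tau_\beta$. Since $\om_\varphi$ is a Lipschitz graph domain with constant $\gamma$, its chord arc constants are controlled by $\gamma$. Then invoke Lemma~\ref{sharptransferPK.lem} (with the $\epsilon$ of \eqref{sharpmain2.eq} equal to our $\epsilon$) to obtain the threshold $M''$; choose $\widetilde M\ge M''$ large enough that also $20\sqrt{n+1}/\widetilde M<\delta_\beta$ and $\widetilde M\ge 20\sqrt{n+1}$.

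\textbf{Execution.} Fix $M\ge\widetilde M$, $s<\tau/M$, $|\xi|\le s/200$, and let $X_0=(\{0\}^n,Mts)$ with $t\in[(4\sqrt{n+1})^{-1},(3/4)(n+1)^{-1/2}]$. Apply Lemma~\ref{sharptransferPK.lem} with $x_0=0$, $\Omega_1=\widetilde{\om}_{Ms,\xi}$, $\Omega_2=\om_\varphi$, $A_1=A$, $A_2=A_{\tau,\varphi}$. The hypothesis $A_1=A_2$ on $\Omega_1$ is valid because $Ms<\tau$ forces the cylinder $\C(0,Ms,e_{n+1},\xi)$ into the $\varphi$-adapted Carleson box $R_{Q_\tau,\varphi}$, on which $A_{\tau,\varphi}=A$ by Lemma~\ref{Qtaulocal.lem}. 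The pole $X_0$ has exactly the form required by \eqref{sharpmain1.eq}--\eqref{sharpmain2.eq}, and so \eqref{sharpmain2.eq} gives
\[ (1+\epsilon)^{-1}\le \frac{f(y)}{f(z)}\le 1+\epsilon, \qquad f:=\frac{d\hm_1^{X_0}}{d\hm_2^{X_0}}, \]
for $\hm_2^{X_0}$-a.e.\ $y,z\in B(0,10s)\cap\partial\om_\varphi$. Since $B(0,10s)$ lies well inside $\C$, the boundaries of $\Omega_1$ and $\Omega_2$ agree on $B(0,10s)$, so the relation $h^{X_0}(z)=f(z)\,h_{\tau,\varphi}^{X_0}(z)$ holds on the surface balls of interest. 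For $y\in B(0,5s)\cap\Gr(\varphi)$ and $r\in(0,5s)$, writing $y=(y_x,\varphi(y_x))$ and pulling out $f$ using its almost-constancy,
\[ \frac{\left(\fint_{B(y,r)}(h^{X_0})^{2}\,d\sigma\right)^{1/2}}{\fint_{B(y,r)} h^{X_0}\,d\sigma}\le (1+\epsilon)^{2}\, \frac{\left(\fint_{\Delta_\varphi(y_x,r)}(h_{\tau,\varphi}^{X_0})^{2}\,d\sigma\right)^{1/2}}{\fint_{\Delta_\varphi(y_x,r)} h_{\tau,\varphi}^{X_0}\,d\sigma}. \]
To bound the right-hand factor via Proposition~\ref{mainlocalization.prop}, set $r'=Mts$ and $\delta=r/r'$. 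One checks $\Phi(X_0)=(0,Mts)\in D(y_x,r')$ (because $|y_x|\le 5s$ and $Mts\ge Ms/(4\sqrt{n+1})$ force $\sqrt{|y_x|^{2}+(Mts)^{2}}<10Mts$ for $M\ge 20\sqrt{n+1}$, and $Mts>r'/2$ is automatic) and $\delta\le 20\sqrt{n+1}/M<\delta_\beta$. The proposition then bounds the last ratio by $1+\beta$, and multiplying gives $(1+\epsilon)^{2}(1+\beta)\le(1+\epsilon)^{3}$, which is the desired conclusion.

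\textbf{Main obstacle.} The delicate step is coordinating the parameters: $\gamma$ and $\tau$ are fixed by the target precision $\beta$ through Proposition~\ref{mainlocalization.prop}, while $\widetilde M$ must then be large enough simultaneously (i) relative to $1/\delta_\beta$ so that the scale ratio $\delta=r/r'$ stays in the admissible range of that proposition, and (ii) relative to the chord arc constants of $\om_\varphi$ so that Lemma~\ref{sharptransferPK.lem} yields the $(1+\epsilon)$-near-constancy of the Radon--Nikodym derivative---all while retaining $s<\tau/M$ so that $A$ and $A_{\tau,\varphi}$ coincide throughout $\C$. A secondary, but necessary, check is that the surface balls considered avoid the lateral face of $\C$, so that $\partial\Omega_1$ and $\partial\Omega_2$ agree there and the two Poisson kernels can be directly compared against the same surface measure.
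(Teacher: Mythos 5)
Your proposal is correct and follows essentially the same route as the paper: invoke Proposition~\ref{mainlocalization.prop} for the almost-optimal $B_{2}$ bound on the Poisson kernel of the modified operator $L_{\tau,\varphi}$ in the graph domain $\Omega_\varphi$, then transfer it to the Poisson kernel of $L$ in the truncated cylinder via \eqref{sharpmain2.eq}, with the needed inclusion $\C(0,Ms,e_{n+1},\xi)\subset R_{Q_\tau,\varphi}$ ensuring the two coefficient matrices agree there; the parameter bookkeeping you spell out (fixing $\gamma,\tau$ via Proposition~\ref{mainlocalization.prop}, then taking $\widetilde M\gtrsim\max\{M'',\sqrt{n+1}/\delta_\beta\}$ so that $\delta=r/(Mts)<\delta_\beta$ and $\Phi(X)\in D(y_x,Mts)$) matches the paper's choices $\tau=\tau_\epsilon$, $\widetilde M=20\sqrt{n+1}\max\{1/\delta_\epsilon,M'',1\}$, $\gamma=\min\{\gamma_\epsilon,\delta_n,\gamma_n\}$. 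The only cosmetic difference is that the paper simply sets $\beta=\epsilon$ and allows itself the slack $(1+\epsilon)(1+\epsilon)^{2}\le(1+\epsilon)^{3}$, whereas you treat $\beta$ as a free parameter constrained by the same inequality; also, your factor $(1+\epsilon)^{2}$ from the Radon--Nikodym comparison is a slightly conservative overestimate (oscillation of $f$ by $1+\epsilon$ in fact gives a single $(1+\epsilon)$ factor), but this does not affect the conclusion.
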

\begin{proof}
We first fix all the constants, the reason for which will become clear shortly. Let $\delta_\epsilon, \gamma_\epsilon, \tau_\epsilon$ be constants from Proposition \ref{mainlocalization.prop} (using $\beta=\epsilon$), and let $M^{''}$ be the constant from Lemma \ref{sharptransferPK.lem}.
We set $\tau = \tau_\epsilon$ and $\widetilde M = 20 \sqrt{n+1}\max\{1/\delta_\epsilon, M^{''},1\}$. Finally, we set $\gamma = \min\{ \gamma_\epsilon, \delta_n,\gamma_n\}$, where $\gamma_n$ is chosen so that
\[\C(0, s', e_{n+1}, \xi) \cap \Omega_\varphi \subset \, R_{Q_{s'}, \varphi}\]
for all $s' > 0$ and $\xi \le s'/100$. In particular, the inclusion holds for $\xi \le s'/(M^{''}100)$. 
For any Lipschitz function $\varphi$ whose Lipschitz constant is bounded by $\gamma$, the domain 
\[\om := \om_\varphi := \{(x,t): x \in \rn, t > \varphi(x)\} \]
clear is a $(\delta_n,\infty)$-chord arc domain.
Let $M \ge \widetilde M$ and $s \in (0, \tau/M)$ be arbitrary. Then $\widetilde{\om}_{Ms,\xi}$ may play the role of $\om_1$ in Lemma \ref{sharptransferPK.lem} (note $x_0 = 0$ here). 
Indeed, while $\rn \times \{0\}$ may not be the plane that minimizes the bilateral distance, the graph of $\varphi$ is $\delta_n$-flat at scale $Ms$ with respect to this plane. 

Let $y\in B(0,5s)\cap \Gr(\varphi)$ and $r\in (0,5s)$ be arbitrary. If we write $y=(y',\varphi(y'))$ with $y'\in \RR^n$, then $|y'| < 5s$.
Notice that $\Phi$, the `flattening map' for $\varphi$, fixes $X=(0, Mts)$ and that 
\[|X- y'| \le \sqrt{|y'|^2 + (Mts)^2} \leq  2Mts\]
by the choice of $M$,
so $X \in \Phi^{-1}(D(y',Mts))$. Moreover, by our choice of $M \ge 20\sqrt{n+1}/\delta_\epsilon$ we have
\[\delta_\epsilon \cdot Mts \geq 5s > r.\]
Thus by Proposition \ref{mainlocalization.prop},
\[\frac{\left(\fint_{B(y,r)}(h_{\tau}^{X})^2\, d\sigma(z)\right)^{1/2}}{\fint_{B(y,r)}h_{\tau}^{X}\, d\sigma(z)} \le 1 + \epsilon,\]
where $h_\tau$ is the Poisson kernel for the operator $L_{\tau,\varphi}$ in $\om = \Omega_{\varphi}$.
On the other hand, the elliptic matrices $A_1 = A_{\tau,\varphi}$ and $A_2 = A$ agree on the cylinder $\C(0,Ms, \vec e_{n+1}, \xi)$ by the inclusion 
\[ \C(0, Ms, \vec e_{n+1}, \xi) \subset R_{Q_{Ms}, \varphi} \subset R_{Q_\tau, \varphi}. \] Therefore applying \eqref{sharpmain2.eq} from Lemma \ref{sharptransferPK.lem} we obtain \eqref{RH2optSmalllipcyl.eq}.
\end{proof}

The following corollary follows immediately by the theory of weights:
\begin{corollary}\label{optAinfLip.cor} 
Let $\epsilon \in (0,1)$. There exist constants\footnote{Here $M''$ is from Lemma \ref{sharptransferPK.lem}.} $M^* \ge M'' \ge 1/\epsilon$ and $\gamma'_\epsilon$, $\tau'_\epsilon$ depending on $\epsilon$ and allowable constants such that the following holds.

Assume $L = -\div A \nabla$ with $\Lambda$-elliptic coefficients satisfying the H\"older condition \eqref{cond:Holder}, and $\varphi : \rn \to \re$, $\varphi(0) = 0$, is Lipschitz with $\|\nabla \varphi\|_\infty \le \gamma'_\epsilon$. Then for all $M \ge M^* $, $s \in (0, \tau'_\epsilon/M)$ and $\xi \in [-s/200, s/200]$ the elliptic measure, $\widetilde\hm$, for the operator $L$ in the domain
\[\widetilde{\om}_{Ms,\xi} = \widetilde{\om}_{\varphi, Ms,\xi} : =\left\{(x,t): x \in \rn, t > \varphi(x)\right\} \cap \C(0, Ms, e_{n+1}, \xi)\]
satisfies
\begin{equation}\label{optAinfLip.eq}
(1 + \epsilon)^{-1}\left(\frac{\sigma(E)}{\sigma(\Delta)}\right)^{1+\epsilon} \le \frac{\widetilde\hm^X(E)}{\widetilde\hm^X(\Delta)} \le (1 + \epsilon)\left(\frac{\sigma(E)}{\sigma(\Delta)}\right)^{1-\epsilon}, \quad \forall E \subset \Delta \subset B(0, 2s),
\end{equation}
where $\sigma := \HH^n|_{\partial\widetilde{\om}_{M s,\xi}}$ and $X = (\{0\}^n, M ts)$ for any $t \in \left[\tfrac{1}{4\sqrt{n+1}}, \tfrac{3}{4\sqrt{n+1}}\right]$ and $\Delta$ is an arbitrary surface ball.
\end{corollary}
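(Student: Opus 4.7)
The plan is to deduce \eqref{optAinfLip.eq} from the near-optimal reverse Hölder estimate in Proposition \ref{Smalllipcyloptimal.lem} via the quantitative self-improvement theory for weights with $B_2$-constant close to $1$, as developed by Korey \cite{Korey} (building on Gehring and Iwaniec).

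Given the target $\epsilon \in (0,1)$, I first choose an auxiliary parameter $\eta = \eta(\epsilon) > 0$, small enough to be fixed by the calibration below, and apply Proposition \ref{Smalllipcyloptimal.lem} with $\eta$ in place of $\epsilon$. Setting $\gamma'_\epsilon := \gamma(\eta)$, $\tau'_\epsilon := \tau(\eta)$, and $M^* := \max\{\widetilde M(\eta), M'', 1/\epsilon\}$, the Poisson kernel $h := d\widetilde\hm^X/d\sigma$ satisfies a $B_2$ bound with constant $(1+\eta)^3$ on every surface ball $\Delta' \subset B(0,2s)\cap \partial\widetilde\om_{Ms,\xi}$. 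Since $\gamma'_\epsilon \ll 1$, the surface is $n$-Ahlfors regular with constant close to $1$ (hence doubling), so the standard Muckenhoupt theory on doubling metric measure spaces applies.

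Korey's theorem then asserts that the $B_2$ bound above implies $\|\log h\|_{BMO(\sigma)} \le \Psi(\eta)$ on $B(0,2s)\cap \partial\widetilde\om_{Ms,\xi}$, where $\Psi(\eta) \to 0$ as $\eta \to 0^+$ with a quantitative rate. Calibrating $\eta$ sufficiently small, I may arrange $\|\log h\|_{BMO(\sigma)} \le \delta$ for any preassigned $\delta = \delta(\epsilon) > 0$. John--Nirenberg on the doubling space then yields, for every $p \le c/\delta$ and every surface ball $\Delta \subset B(0,2s)$,
\[
\left(\fint_\Delta h^{p}\, d\sigma\right)^{1/p} \le (1 + O(p\delta)) \fint_\Delta h\, d\sigma,
\]
together with the analogous bound $\left(\fint_\Delta h^{-p}\, d\sigma\right)^{1/p} \le (1 + O(p\delta))\left(\fint_\Delta h\, d\sigma\right)^{-1}$.

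With $p = 1/\epsilon$ (requiring $\delta \ll \epsilon$), both halves of \eqref{optAinfLip.eq} follow by routine Hölder arguments. Applying Hölder with conjugate exponents $(p, p') = (1/\epsilon, 1/(1-\epsilon))$ to $\widetilde\hm^X(E) = \int_E h\, d\sigma$ and dividing by $\widetilde\hm^X(\Delta) = \sigma(\Delta)\fint_\Delta h$ gives
\[
\frac{\widetilde\hm^X(E)}{\widetilde\hm^X(\Delta)} \le \left(\frac{\sigma(E)}{\sigma(\Delta)}\right)^{1-\epsilon} \frac{\left(\fint_\Delta h^{1/\epsilon}\, d\sigma\right)^{\epsilon}}{\fint_\Delta h\, d\sigma} \le (1+\epsilon)\left(\frac{\sigma(E)}{\sigma(\Delta)}\right)^{1-\epsilon},
\]
which is the upper bound. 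For the lower bound, I decompose $\sigma(E) = \int_E h^{1/(1+\epsilon)}\, h^{-1/(1+\epsilon)}\, d\sigma$ and apply Hölder with exponents $(1+\epsilon, (1+\epsilon)/\epsilon)$ to obtain
\[
\left(\frac{\sigma(E)}{\sigma(\Delta)}\right)^{1+\epsilon} \le \frac{\widetilde\hm^X(E)}{\widetilde\hm^X(\Delta)} \cdot \left[\fint_\Delta h\, d\sigma \cdot \left(\fint_\Delta h^{-1/\epsilon}\, d\sigma\right)^{\epsilon}\right],
\]
where the bracketed $A_{1+\epsilon}$-constant of $h$ is $\le 1+\epsilon$ by the John--Nirenberg estimates above. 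The only subtlety in the program is the quantitative calibration in Korey's self-improvement that ensures all the $O(\delta/\epsilon)$ errors fit inside the prescribed $(1+\epsilon)$ factors; no new idea beyond standard weight theory is required.
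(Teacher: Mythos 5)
Your proposal is correct and follows essentially the same route as the paper: apply Proposition \ref{Smalllipcyloptimal.lem} to drive the $B_2$ constant of $h=d\widetilde\hm^X/d\sigma$ to $1$, invoke Korey's quantitative self-improvement (the paper cites \cite[Corollary 8]{Korey} directly to obtain $A_{1+\epsilon}$ and $B_{1/\epsilon}$ with constant $1+\epsilon$, whereas you unfold the same result via $\|\log h\|_{BMO}$ small plus John--Nirenberg), and finish with the standard H\"older computations. This is only an expository difference, not a substantive one.
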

\begin{proof}
Proposition \ref{Smalllipcyloptimal.lem} says that $\widetilde\omega^X = h^X d\sigma \in B_2(\sigma)$, and the $B_2$ constant can be made as close to $1$ as possible. For any $\epsilon \in (0,1)$, we choose constants appropriately in the Proposition, so that the $B_2$ constant (i.e. the right hand side of \eqref{RH2optSmalllipcyl.eq}) is bounded above by $\exp((\epsilon/K)^2)$, where $K$ is a dimensional constant as in \cite[Corollary 8]{Korey}. Then by \cite[Corollary 8]{Korey}
	\begin{equation}\label{eq:weightopt}
		\widetilde \omega^X \in A_{1+\epsilon}(\sigma) \text{ with constant } 1+\epsilon, \quad \widetilde \omega^X \in B_{\frac{1}{\epsilon}}(\sigma) \text{ with constant } 1+ \epsilon. 
	\end{equation} 
	(See \cite[Section 3.1]{Korey} for the definition of $A_p(\sigma)$ weight.)
	Both estimates in \eqref{optAinfLip.eq} then follow easily from \eqref{eq:weightopt} and H\"older's inequality.
\end{proof}

We are now ready to give the Proof of Theorem \ref{VCADcorr.thm}. Here we diverge a bit from the techniques in \cite{MT,KT-Duke}, opting for an approach that largely avoids the use of the Poisson kernel and instead works with the elliptic measure more directly. This avoids some of the issues that arise in \cite{MT}.

\begin{proof}[Proof of Theorem \ref{VCADcorr.thm}]
Let $\om$ be a vanishing chord arc domain. Recall that this means for all $\delta \in (0, \delta_n]$, $\om$ is a $(\delta, R_\delta)$ chord arc domain for some $R_\delta > 0$ (see Remarks \ref{ADR.Deltarforallr.rmks}). We set $\hm := \hm^{X_0}$ to be the elliptic measure associated to $L$ for the domain $\om$ with fixed pole $X_0\in \Omega$. 

We first make the following claim which gives a much rougher estimate than what we will produce in the end. 
\begin{claim}\label{orighmisAinfty.cl}
The elliptic measure $\hm$ is locally an $A_\infty$ weight, that is, there exist $\tau_0$ and constants $C_0$, $\theta$ depending on allowable constants such that 
\begin{equation}\label{orighmisAinfty.eq}
\frac{\hm(E)}{\hm(\Delta)} \le C_0 \left(\frac{\sigma(E)}{\sigma(\Delta)}\right)^\theta, \forall E \subset \Delta,
\end{equation}
where $\Delta$ is any surface ball with radius less than or equal to $\tau_0$, that is, $\Delta = B(x, r) \cap \pom$ with $x \in \pom$ and $r \in (0, \tau_0]$.
\end{claim}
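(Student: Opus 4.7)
My plan is to reduce the claim, at each sufficiently small scale, to the nearly-optimal $A_\infty$ estimate on small Lipschitz-constant graph domains provided by Corollary \ref{optAinfLip.cor}, and then to transfer this back to $\om$ using the local comparison established in Lemmas \ref{agreeingops.lem} and \ref{sharptransferPK.lem}.

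First, I would fix some auxiliary $\epsilon \in (0,1)$ (e.g.\ $\epsilon = 1/2$) and let $\gamma'_\epsilon$, $\tau'_\epsilon$, $M^*$ be the corresponding parameters produced by Corollary \ref{optAinfLip.cor}. Since $\om$ is vanishing chord arc (Definition \ref{vcad-tt}), there exists $R_0 > 0$ for which $\om$ is a $(\delta_0, R_0)$-chord arc domain with $\delta_0$ small enough that the induced Lipschitz approximation constant $C_n \delta_0$ lies below $\gamma'_\epsilon$. Set $\tau_0 := R_0/M^*$ and fix a surface ball $\Delta = B(x, r) \cap \pom$ with $r \le \tau_0$. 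Working in coordinates centered at $x$ with $\vec n_{x, M^* r} = e_{n+1}$, the separation and Reifenberg-flatness properties place $\pom \cap \mathcal{C}(x, M^* r)$ in the thin strip $\mathcal{S}_{\delta_0}$, while $\om$ itself is sandwiched between $\mathcal{C}_{\delta_0}^+(x, M^* r)$ and $\mathcal{C}_{\delta_0}^-(x, M^* r)$.

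Second, I would use a standard small-constant Lipschitz graph construction (cf.\ \cite{Semmes-SmallCAD1, DJ}) to produce $\varphi : \rn \to \re$ with $\varphi(0) = 0$ and $\|\nabla\varphi\|_\infty \le C_n \delta_0 \le \gamma'_\epsilon$, such that $\om_\varphi := \{(y, t) : t > \varphi(y)\}$ approximates $\om$ to Hausdorff distance $O(\delta_0 M^* r)$ in $\mathcal{C}(x, M^* r)$. Applying Corollary \ref{optAinfLip.cor} to the truncation $\widetilde\om_{\varphi, M^* r}$ then yields the $A_\infty$-type estimate for the $L$-elliptic measure $\widetilde\hm^{X_1}$ on $\pom_\varphi$ at every surface ball inside $B(x, 2r)$, with pole $X_1 := x + t M^* r\, e_{n+1}$ for any admissible $t \in [1/(4\sqrt{n+1}), 3/(4\sqrt{n+1})]$.

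Third, I would transfer this bound from $\widetilde\hm^{X_1}$ on $\pom_\varphi$ to $\hm^{X_1}$ on $\pom$. Using the CFMS estimate (Lemma \ref{CFMS.lem}), Bourgain's estimate (Lemma \ref{Bourgain.lem}) and the comparison principle (Lemma \ref{comparisonprinc.lem}), together with the Hausdorff-closeness of $\pom$ and $\pom_\varphi$ inside the cylinder and the nearly-Euclidean $(1 \pm \delta_0)\hm_n r^n$ surface measure bounds (Remarks \ref{ADR.Deltarforallr.rmks}), one obtains comparability $\hm^{X_1}(E) \approx \widetilde\hm^{X_1}(E')$ for corresponding Borel sets $E \subset \Delta$ and $E' \subset \pom_\varphi$, with constants depending only on allowable data. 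A standard change of pole from $X_1$ back to the global pole $X_0$ via a Harnack chain and the doubling property (Lemma \ref{hmdoubling.lem}) then yields \eqref{orighmisAinfty.eq} with $C_0$ and $\theta$ depending only on allowable data. The main obstacle is this last transfer: Lemma \ref{sharptransferPK.lem} literally requires the two domains to coincide inside the cylinder, which is not the case here. I would address this by introducing an intermediate domain whose boundary matches $\pom$ on $B(x, 10r)$ but coincides with $\pom_\varphi$ farther out (this is the substitute for the `corona' arguments of \cite{DJ, Semmes-DJwNTA}), or equivalently by comparing Green functions directly using the local NTA structure of both domains and the confinement of $\pom \bigtriangleup \pom_\varphi$ to a thin strip of controlled surface measure.
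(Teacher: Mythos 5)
Your broad strategy---approximate $\pom$ near scale $r$ by a small-Lipschitz graph, apply Corollary \ref{optAinfLip.cor} to that graph domain, and then transfer the $A_\infty$ bound back to $\om$---is indeed the same as the paper's, and you correctly identify that the transfer is the delicate step. However, your proposed transfer mechanism has a genuine gap. You assert a two-sided comparability $\hm^{X_1}(E) \approx \widetilde\hm^{X_1}(E')$ ``for corresponding Borel sets,'' derived from CFMS, Bourgain, and the comparison principle, but those tools only give comparability of elliptic measures of \emph{surface balls} on a \emph{common} boundary; they say nothing about arbitrary Borel sets on two distinct (merely $\delta$-close) boundaries. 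In fact, no uniform pointwise comparability between elliptic measures of $\Omega$ and of a Lipschitz approximant $\Omega_\varphi$ can hold at all scales, because parts of $\pom$ can lie at positive distance from $\Gamma_\varphi$. Neither of your suggested remedies (inserting an intermediate domain matching $\pom$ near $B(x,10r)$, or a direct Green-function comparison) is formulated precisely enough to close this: the first just relocates the same difficulty, and the second does not obviously yield a uniform bound.

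The missing idea is to use \emph{one-sided} control from the \emph{maximum principle}, not two-sided comparability. The paper takes the \emph{inner} Lipschitz graph $\Omega^+$ from the Semmes decomposition (so $\Omega^+ \cap \mathcal{C} \subset \Omega \cap \mathcal{C}$) and observes that for any set $F$ on the common part of the boundaries, $\widetilde\hm(F) \ge \widetilde\hm_+(F)$; the set $E \cap \Gamma_+$ plays the role of $F$. It is then crucial that Semmes property (3) ensures $\sigma(\pom \setminus \Gamma_+)$ is exponentially small in $\delta^{-1/4}$, so if $\sigma(E) \ge \gamma \sigma(\Delta_0)$ then $\sigma_+(E \cap \Delta_+) \ge (\gamma/2)\sigma_+(\Delta_+)$, and Corollary \ref{optAinfLip.cor} applied to $\widetilde\Omega^+$ gives $\widetilde\hm_+(E\cap\Delta_+) \gtrsim \widetilde\eta \, \widetilde\hm_+(\Delta_+)$. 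Because this yields only a lower bound on $\widetilde\hm(E)$ (losing a bit of mass from $E$ to $E\cap\Gamma_+$), the proof is run through the Coifman--Fefferman criterion ``for each $\eta$ there is $\gamma(\eta)$ such that $\sigma(E)/\sigma(\Delta)\ge\gamma \Rightarrow \hm(E)/\hm(\Delta)\ge\eta$,'' which is equivalent to \eqref{orighmisAinfty.eq}, rather than trying to reach \eqref{orighmisAinfty.eq} directly. The remaining steps you list (Lemma \ref{agreeingops.lem} to pass from the truncated domain $\widetilde\Omega = \Omega\cap\mathcal{C}$ to $\Omega$, where the domains literally agree near $\Delta_0$, and a change of pole from $X_1$ to $X_0$) are correct and are what the paper does.
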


To state a more precise estimate, we first fix some constants. For every $\beta\in (0,1)$, we fix a constant $\epsilon\in (0,\beta/2)$ so that
\begin{equation}\label{goodepschoice.eq}
(1+\epsilon) \left( \frac12 + \epsilon \right)^{1-\epsilon} \leq \frac12 (1+\beta),
\end{equation}
with the intention of using the estimate obtained in Corollary \eqref{optAinfLip.cor}. Let $M^{''}>0$ be the constant found in Lemma \ref{sharptransferPK.lem}, and $M^* , \gamma'_{\epsilon}, \tau'_{\epsilon}>0$ be constants found in Corollary \ref{optAinfLip.cor}, and set $M= \max\{M^* ,1/\beta\}$. Let $\delta\in (0, \delta_n]$ be sufficiently small, depending on $\beta, \epsilon, \gamma'_{\epsilon}, M^* $ and allowable constants, and recall $\Omega$ is a $(\delta, R_{\delta})$-chord arc domain.

\begin{claim}\label{halfalmosthalf.cl}

For any $x_0 \in \pom$ and $s$ sufficiently small satisfying
\[ s M \le \min\{\tau'_\epsilon, \tau_0, R_\delta/10, \dist(X_0, \pom)/5)\}, \] 
if $E \subset B(x_0, s) \cap \pom =: \Delta_0$ satisfies
\[\frac{\sigma(E)}{\sigma(\Delta_0)} = \frac12 \]
then
\begin{equation}\label{tmp:omtildehalf}
	\frac{\widetilde\hm(E)}{\widetilde\hm(\Delta_0)} = \frac12 + C\beta,
\end{equation} 
where $C$ is a constant depending on the dimension. 
Here $\sigma$ is the surface measure for the domain $\widetilde\om(x_0, M s)$, which agrees with that of $\om$  on $B(x_0, s)$;
and $\widetilde\hm$ is the elliptic measure for $\widetilde\om(x_0, M s)$ with pole at $X_1 := x_0 + \tfrac{1}{2\sqrt{n+1}}M s \, \vec n_{x_0, M s}$. 
\end{claim}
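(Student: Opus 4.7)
The strategy is to reduce the claim to the almost-optimal $A_\infty$ estimate of Corollary \ref{optAinfLip.cor} for Lipschitz graph domains with small Lipschitz constant, by approximating the local piece $\widetilde\om(x_0,Ms)$ of our vanishing chord arc domain with such a Lipschitz graph domain. After rotating and translating so that $x_0=0$ and $\vec n_{x_0,Ms}=e_{n+1}$, I would invoke the $(\delta,R_\delta)$-chord arc structure of $\om$ (with $\delta$ taken very small relative to $\gamma'_\epsilon$) together with a small-constant variant of the Semmes/David--Jerison sawtooth construction from \cite{DJ,Semmes-DJwNTA,Semmes-SmallCAD1}: this produces a Lipschitz function $\varphi:\rn\to\re$ with $\varphi(0)=0$ and $\|\nabla\varphi\|_\infty\le \gamma'_\epsilon$, along with a ``coincidence set'' $G\subset\pom\cap B(0,Ms/2)$ on which $\Gr(\varphi)=\pom$, satisfying
\[ \sigma\bigl(\pom\cap B(0,2s)\setminus G\bigr) \le \eta(\delta)\,\sigma(\Delta_0), \]
where $\eta(\delta)\to 0$ as $\delta\to 0$.

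Next, for a suitable vertical shift $\xi\in[-s/200,s/200]$, Corollary \ref{optAinfLip.cor} applied to the Lipschitz graph domain $\widetilde\om_\varphi := \om_\varphi\cap\C(0,Ms,e_{n+1},\xi)$ gives its elliptic measure $\hm_\varphi^{X_1}$ the almost-optimal $A_\infty$ bound
\[ \frac{\hm_\varphi^{X_1}(F)}{\hm_\varphi^{X_1}(\Delta)} \le (1+\epsilon)\left( \frac{\sigma_\varphi(F)}{\sigma_\varphi(\Delta)} \right)^{1-\epsilon} \]
for $F\subset\Delta\subset B(0,2s)$, together with the matching lower bound. The parameter choice \eqref{goodepschoice.eq} is designed precisely so that applying this estimate with $F = E\cap G$, whose $\sigma$-measure differs from $\tfrac12\sigma(\Delta_0)$ by at most $\eta(\delta)\sigma(\Delta_0)$, produces a ratio within $\tfrac12(1+\beta)$ of $\tfrac12$.

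To transfer this bound to $\widetilde\hm^{X_1}$, I would note that for every $y\in G$ there is a small ball $B(y,r_y)$ on which $\widetilde\om(x_0,Ms)$ and $\widetilde\om_\varphi$ coincide, so Lemma \ref{sharptransferPK.lem} --- invoked via its pointwise Radon--Nikod\'ym consequence \eqref{sharpmainaux.eq} together with Remark \ref{observationforDKP.rmk} --- yields $\frac{d\widetilde\hm^{X_1}}{d\hm_\varphi^{X_1}}(y)\approx 1$ uniformly on $G$, and standard Lebesgue differentiation then gives a multiplicative comparison on Borel subsets of $G\cap B(0,s)$. Decomposing $E=(E\cap G)\sqcup(E\setminus G)$, the first piece contributes $\tfrac12\pm C\beta$ to the ratio $\widetilde\hm^{X_1}(E)/\widetilde\hm^{X_1}(\Delta_0)$ by the previous two steps, while the second contributes at most $C\eta(\delta)^\theta\widetilde\hm^{X_1}(\Delta_0)$ via the rough $A_\infty$ of Claim \ref{orighmisAinfty.cl} (combined with Lemma \ref{agreeingops.lem} to pass between $\widetilde\hm^{X_1}$ and $\hm^{X_0}$). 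Taking $\delta$ small enough that $C\eta(\delta)^\theta\le \beta$ and summing yields \eqref{tmp:omtildehalf}. The main obstacle I anticipate is this last transfer step: since $\widetilde\om$ and $\widetilde\om_\varphi$ agree only on a large subset of the boundary rather than on a full cylinder, the global form of Lemma \ref{sharptransferPK.lem} does not apply directly, and the errors from patching local Radon--Nikod\'ym comparisons must be carefully absorbed using the available $A_\infty$-type estimates.
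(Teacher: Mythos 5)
You have identified all the right ingredients (Semmes-type Lipschitz approximation, Corollary \ref{optAinfLip.cor} for small Lipschitz domains, the rough $A_\infty$ from Claim \ref{orighmisAinfty.cl} to absorb errors off the coincidence set), and your rotation/translation normalization and error-term strategy are exactly in line with the paper's. But the transfer step you flag as the ``main obstacle'' is in fact a genuine gap, and the patching of local Radon--Nikod\'ym comparisons cannot deliver the result. The estimate $\frac{d\widetilde\hm^{X_1}}{d\hm_\varphi^{X_1}}(y)\approx 1$ that you would get from Lemma \ref{sharptransferPK.lem} on a small ball $B(y,r_y)$ carries implicit constants depending on dimension, ellipticity and chord-arc constants; it is far from the $(1+\epsilon)^{\pm 1}$-closeness one needs. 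The refined estimate \eqref{sharpmain2.eq} gives closeness to $1$ only after you take the coincidence cylinder to be much larger than the surface ball under consideration (that is why the lemma imposes $M\ge M''$ large). When the two domains coincide only on a thin neighborhood of a partial-coincidence set $G$, there is no such separation of scales near the points of $\pom\setminus G$, so the local RN derivatives never become close to $1$ and the product of errors over a Whitney-type decomposition of $G$ does not telescope. You would only recover the crude mutual absolute continuity of Lemma \ref{agreeingops.lem}, which is useless for a $\tfrac12\pm C\beta$ conclusion.

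The paper sidesteps this by using a {\it different structural feature} of the Semmes decomposition that your proposal does not invoke: the approximating Lipschitz domains \emph{nest} the original domain, $\om^+\cap\C \subseteq \om\cap\C \subseteq \om^-\cap\C$, not merely coincide with it on a large subset of the boundary. Working with the outer domain $\widetilde\om^-\supset\widetilde\om$, the inclusion gives, by the maximum principle, the \emph{one-sided} inequality
\[\widetilde\hm(F)\;\le\;\widetilde\hm_-(F)\quad\text{for Borel } F\subset\partial\widetilde\om\cap\partial\widetilde\om^-\]
with constant exactly $1$, rather than an approximate two-sided RN comparison. This bounds $\widetilde\hm(E\cap\Gamma_-)$ cleanly by $\widetilde\hm_-(E\cap\Gamma_-)\le\tfrac12(1+\beta)\,\widetilde\hm_-(\Delta_-)$ via Corollary \ref{optAinfLip.cor}. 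The remaining ratio $\widetilde\hm_-(\Delta_-)/\widetilde\hm(\Delta_0)$ is then controlled to within $(1+\beta)^2$ by a separate argument using H\"older continuity at the boundary and the near-optimal $A_\infty$ for $\widetilde\hm_-$ on a slightly shrunk surface ball, again via the maximum principle. Without the domain inclusion you have no mechanism to replace this step, and the claim as written is out of reach by RN patching alone.
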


Let us take this claim for granted momentarily and see how to conclude the theorem. Assuming \eqref{tmp:omtildehalf}, we want to change to domain from $\widetilde\Omega(x_0, Ms)$ to $\Omega$, and change the pole from $X_1$ to $X_0$, in the hope to get a similar estimate for $\omega = \omega^{X_0}$.
By Lemma \ref{sharptransferPK.lem} and the choice of parameters,
\[ \frac{\omega^{X_1}(E)}{\omega^{X_1}(\Delta_0)} \leq (1+\epsilon)^2 \, \frac{\widetilde\hm(E)}{\widetilde\hm(\Delta_0)} \leq (1+\beta)^2 \left( \frac12 + C\beta \right). \]
Here we remind the reader that the pole of $\widetilde\hm$ is at $X_1$.
To change the pole of $\omega$ from $X_1$ to $X_0$, we use Lemma \ref{Kernelcompare.lem} in a similar manner to Lemma \ref{goodpolechange.lem}. Note that the pole $X_1$ is roughly at distance $M s \ge s/\beta$ from the center of $\Delta_0$ a surface ball of radius $s$, and $\dist(X_1, \pom) \leq Ms \le \dist(X_0, \pom)/5$. This allows us to use Lemma \ref{Kernelcompare.lem}  to say 
\begin{equation}
	\frac{\omega(E)}{\omega(\Delta_0)} \leq (1+C\beta^{\mu}) \, \frac{\omega^{X_1}(E)}{\omega^{X_1}(\Delta_0)} \leq \frac12 + C' \beta^\mu.
\end{equation}
To sum up, this combined with Claim \ref{halfalmosthalf.cl} says that for any $x_0\in \pO$ and $s$ sufficiently small (depending on $\beta$),
\begin{equation}\label{cond:d}
	\frac{\sigma(E)}{\sigma(\Delta_0)} = \frac12 \text{ for a Borel set } E \subset \Delta_0 \implies \frac{\omega(E)}{\omega(\Delta_0)} \leq \frac12 + C' \beta^{\mu}. 
\end{equation}

Among other things \cite[Theorem 10]{Korey} establishes the equivalence between the above statement \eqref{cond:d} (that is, condition (e) in \cite[Theorem 10]{Korey}) and the fact that the $B_2$ constant of $\omega$ is close to $1$ (that is, condition (c) in \cite[Theorem 10]{Korey}). We in fact need a quantitative local estimate proved in \cite[Theorems 8 and 9]{Korey}, to see that \eqref{cond:d} implies
\begin{equation}\label{cond:c}
	\left( \fint_{\Delta(x_0,s)} k \, d\sigma \right)^2 \leq (1+C' \beta^{\mu/2}) \left( \fint_{\Delta(x_0,s)}  k^{1/2} \, d\sigma \right),
\end{equation}
where $k = d \omega/d\sigma$ is the Poisson kernel. For the sake of self-containment we include proof of \eqref{cond:c} in Appendix \ref{Koreyexplain.sect}, see Lemma \ref{Koreyexplain.lem}. Since $\beta$ can be chosen arbitrarily small, again by \cite[Theorem 8]{Korey} we conclude that $\log k \in VMO$.

It remains to prove Claims \ref{orighmisAinfty.cl} and \ref{halfalmosthalf.cl}. We start with Claim \ref{halfalmosthalf.cl}, taking Claim \ref{orighmisAinfty.cl} for granted.
\begin{proof}[Proof of Claim \ref{halfalmosthalf.cl}]
Without loss of generality we may assume $x_0 = 0$ and $\vec n_{x_0, Ms} = \vec e_{n+1}$. For notational convenience we set $\widetilde{\om} := \widetilde\om(0, Ms)$.

We make use of the Semmes decomposition to approximate $\Omega$ by Lipschitz domains,  see \cite{Semmes-SmallCAD1, KT-Duke}. To be precise by \cite[Lemma 5.1]{KT-Duke} (replacing $\delta^4$ by $\delta$), when $\delta$ is chosen sufficiently small, there exist two Lipschitz functions $\varphi^\pm : \rn \to \re$ whose graphs approximate $\pom$ from inside(+) and the outside(-). Set $\Gamma_\pm := \Gr(\varphi^\pm)$
and
\[\om^\pm:= \{(x,t) \in \ree: x \in \rn, t >\varphi(x)\}\]
then the following properties hold:
\begin{enumerate}
\item $\|\nabla \varphi^\pm\|_\infty \le C_1 \delta^{1/4}$
\item $D[\Gamma^\pm \cap \overline{B(0, Ms)}; \pom \cap \overline{B(0, Ms)}] \le C_1 \delta^{1/4}Ms$
\item $\HH^n\left(B(0,Ms) \cap \pO \setminus \Gamma_{\pm}  \right) \leq c_1 \exp(-c_2 \delta^{-1/4}) \omega_n (Ms)^n $ 
\item $\om^+ \cap \C(0,Ms) \subseteq \om \cap \C(0,Ms) \subseteq \om^- \cap \C(0,Ms)$,
\end{enumerate}
where $C_1, c_1, c_2$ are all positive constants depending on only on dimension. Now we let $\widetilde \hm_\pm$ be the elliptic measure (for $L$) in the domains $\widetilde \om^\pm := \om^\pm \cap \C(0,Ms)$ with pole at $X_1 : = \tfrac{1}{2\sqrt{n+1}}M se_n$ and $\sigma_\pm$ be the surface measure for $\widetilde \om^\pm$.
We choose $\delta$ sufficiently small, to ensure that $C_1 \delta^{1/4}Ms < s/200$ and $C_1 \delta^{1/4} < \gamma'_\epsilon$. Thus by (2) $|\varphi^\pm(0)| = |\varphi^\pm(0) - 0 |< s/200$. (This was the reason for the using parameter $\xi$ to shift the cylinder appearing in  Corollary \ref{optAinfLip.cor} above.) 
Applying Corollary \ref{optAinfLip.cor} to Lipschitz domains $\widetilde\Omega^{\pm}$, we have 
\begin{equation}\label{optAinflipapprox.eq}
(1 + \epsilon)^{-1}\left(\frac{\sigma_\pm(F)}{\sigma_\pm(\Delta)}\right)^{1+\epsilon} \le \frac{\widetilde\hm_\pm(F)}{\widetilde\hm_\pm(\Delta)} \le (1 + \epsilon)\left(\frac{\sigma_\pm(F)}{\sigma_\pm(\Delta)}\right)^{1-\epsilon}, \quad \forall F \subset \Delta \subset B((0,\varphi^{\pm}(0)), 2s),
\end{equation}
where $\Delta$ is a surface ball of the form $B(x,r) \cap \pom^\pm$. (We will not use $\widetilde \om^+$ for the proof of this claim, but it is used in proving Claim \ref{orighmisAinfty.cl}.)

Now let $E \subset \Delta_0 = B(0,s) \cap \pom$ be such that $\sigma(E) = (1/2)\sigma(\Delta_0)$. Set $\Delta_- := B((0,\varphi^-(0)), s) \cap \Gamma_-$. Then 
\begin{align*}
\frac{\sigma_-(E \cap \Gamma_-)}{\sigma_-(\Delta_-)} &= \frac{\sigma(E\cap \Gamma_-)}{\sigma_-(\Delta_-)} \leq \frac{\sigma(E)}{\sigma(\Delta_0)} \, \frac{\sigma(\Delta_0)}{\sigma_-(\Delta_-)} \leq  \frac{1}{2} \frac{(1+\delta)\hm_ns^n}{(1 + C_1^2 \delta^{1/2})^{-n/2} \hm_n s^n } \le \frac{1}{2} + \epsilon
\end{align*}
by choice of $\delta$ sufficiently small, where we used property (1) of the Semmes decomposition, \eqref{projectioninclusion.eq} for the lower bound on $\sigma_-(\Delta_-)$ and for the upper bound on $\sigma(\Delta_0)$ we used the definition of $(\delta,R)$-chord arc domain (Definition \ref{deltaVCAD.def}). It follows from \eqref{optAinflipapprox.eq} and the choice \eqref{goodepschoice.eq} of $\epsilon$ that 
\begin{equation}\label{almosthalfhmminus.eq}
\frac{\widetilde \hm_-(E \cap \Gamma_-)}{\widetilde \hm_-(\Delta_-)} \le \frac12(1 + \beta).
\end{equation}

We now estimate 
\begin{equation}\label{localhalfstart.eq}
\frac{\widetilde \hm(E)}{\widetilde \hm(\Delta_0)} = \frac{\widetilde \hm(E\cap \Gamma_-)}{\widetilde \hm(\Delta_0)}  + \frac{\widetilde \hm(E\setminus \Gamma_-)}{\widetilde \hm(\Delta_0)} =: I + II.
\end{equation}
Notice that
\begin{align}
	\frac{\sigma(\Delta_0 \setminus \Gamma_-)}{\sigma(\Delta_0)} & \leq \frac{\sigma(M\Delta_0\setminus \Gamma_-)}{\sigma(\Delta_0) } \nonumber \\
	&\leq \frac{c_1 \exp(-c_2\delta^{-1/4}) \omega_n (Ms)^n}{(1-\delta) \omega_n s^n} \nonumber \\
	& = \frac{c_1 \exp(-c_2\delta^{-1/4}) M^n }{1-\delta},\label{tmp:smrsd}
\end{align}
by property (3) and the definition of $(\delta,R)$-chord arc domain (Definition \ref{deltaVCAD.def}).
Therefore by a simple pole change argument (or the shaper version, Lemma \ref{Kernelcompare.lem}), Lemma \ref{agreeingops.lem} and the fact that $\hm$ is $A_\infty$ (Claim \ref{orighmisAinfty.cl}) we obtain 
\begin{align*}
	II 
	\le \frac{\widetilde \hm(\Delta_0 \setminus \Gamma_-)}{\widetilde \hm(\Delta_0)} \lesssim \frac{\omega(\Delta_0\setminus \Gamma_-)}{\omega(\Delta_0)} \lesssim \left( \frac{\sigma(\Delta_0 \setminus \Gamma_-)}{\sigma(\Delta_0)} \right)^{\theta} 
	< \beta
\end{align*} 
by choice of $\delta$ small. It remains to treat term $I$, which is a matter of `removing the minus sign' in \eqref{almosthalfhmminus.eq}. Since $\widetilde\Omega \subset \widetilde\Omega^-$, by the maximal principle 
\begin{align*}
\frac{\widetilde \hm(E\cap \Gamma_-)}{\widetilde \hm(\Delta_0)}  \leq \frac{\widetilde \hm_-(E \cap \Gamma_-)}{\widetilde \hm_-(\Delta_-)}  \frac{\widetilde \hm_-(\Delta_-)}{\widetilde \hm(\Delta_0)} \le \frac12 (1 + \beta)  \frac{\widetilde \hm_-(\Delta_-)}{\widetilde \hm(\Delta_0)}.
\end{align*}
Thus, we may reduce proving Claim \ref{halfalmosthalf.cl} to the estimate
\begin{equation}\label{halfalmosthavered.eq}
\frac{\widetilde \hm_-(\Delta_-)}{\widetilde \hm(\Delta_0)} \le (1 + \beta)^2
\end{equation}
and we do this now. 

Notice $|X_1 - 0| \approx Ms \approx |X_1 - (\varphi^-(0),0)|$ so that Bourgain's estimate (Lemma \ref{Bourgain.lem}) and the Harnack inequality yield the estimate
\begin{equation}\label{cepsLB.eq}
c \le \widetilde \hm_-(\Delta_-),\widetilde \hm(\Delta_0) \le 1,
\end{equation}
where $c$ is a constant depending on $\epsilon$, by way of $M$. Additionally, by \eqref{optAinflipapprox.eq} we have that
\begin{align*}
&\frac{\widetilde \hm_-(\Delta_- \setminus (1 - \delta^{1/16})\Delta_- )}{\widetilde \hm_-(\Delta_-)} 
\\ &\quad \le (1 + \epsilon)\left(\frac{\sigma_-(\Delta_- \setminus (1 - \delta^{1/16})\Delta_- )}{\sigma_-(\Delta_-)} \right)^{1-\epsilon}
\\ &\quad \le (1 + \epsilon)\left(\frac{ 1 - (1 + C_1^2 \delta^{1/2})^{-n/2}(1- \delta^{1/16})^n}{(1 + C_1^2 \delta^{1/2})^{-n/2}} \right)^{1-\epsilon} \\
& \quad \leq \frac{\beta}{1+\beta}
\end{align*}
by choice of $\delta$, where we used the property $(1)$ and the inclusion \eqref{projectioninclusion.eq}. Therefore 
\[\widetilde \hm_-(\Delta_-) \le (1 + \beta)\widetilde \hm_-((1 - \delta^{1/16})\Delta_-)\]
and to prove \eqref{halfalmosthavered.eq}, it is enough to show
\begin{equation}\label{halfalmosthavedoubred.eq}
\widetilde \hm_-((1 - \delta^{1/16})\Delta_-) \le (1 + \beta)\widetilde \hm(\Delta_0).
\end{equation}

Let us now assume that $\delta$ is small enough so that $C_1 \delta^{1/4}Ms < \delta^{1/8}s$. Let $x \in \partial \widetilde \om \setminus \Delta_0 =  \partial \widetilde \om \setminus B(0,s)$, then either $x \in \pO \cap \overline{B(0,Ms)} \setminus B(0,s)$ or $x \in \partial \C(0, Ms)$. 
In the first case, the choice of $\delta$ and the property $(2)$ guarantee that there exists $\hat x \in \Gamma_- \cap \overline{B(0,Ms)}$ such that $|x - \hat x| < \delta^{1/8}s$, and thus 
\[ |\hat x - (\varphi^-(0),0)| \ge |x| - |x- \hat x| - |(\varphi^-(0),0)|  \ge (1 - 2\delta^{1/8})s. \]
In particular, 
\[ B\left(\hat x, \frac{\delta^{1/16}}{2} \right) \cap (1- \delta^{1/16})\Delta_- = \emptyset. \]
This means we may use the H\"older continuity of solutions vanishing at the boundary (Lemma \ref{HCatbdry.lem}) to yield the estimate 
\[\widetilde \hm_-^x((1- \delta^{1/16})\Delta_-) \le C\left(\frac{\delta^{1/8}}{\delta^{1/16}}\right)^{\mu} \le C\delta^{\mu/16},\]
where $\mu$ is from Lemma \ref{HCatbdry.lem}.
In the second case, that is, when $x \in \partial \C(0, Ms) \cap  \partial \widetilde\om$, then $x \in \partial \C(0, Ms) \cap  \partial \widetilde\om^-$ since $\Omega \cap \C(0,Ms) \subset \Omega^- \cap \C(0,Ms)$. 
Hence $\widetilde \hm_-^x((1- \delta^{1/16})\Delta_-) = 0$.
In either case, we have that  
\[\widetilde \hm_-^x((1- \delta^{1/16})\Delta_-) \le C\delta^{\mu/16}\]
whenever $x \in \partial \widetilde \om \setminus \Delta_0$.
Therefore by the maximum principle (and that $\widetilde \hm_-^x((1- \delta^{1/16})\Delta_-) \le 1$ for $x  \in \Delta_0$) we have 
\begin{align*}
\widetilde \hm_-((1- \delta^{1/16})\Delta_-) &\le  \widetilde \hm(\Delta_0) + C\delta^{\mu/16}
\\ & \le (1 + (C/c) \delta^{\mu/16})\widetilde \hm(\Delta_0)
\\ & \le (1 + \beta)\widetilde \hm(\Delta_0)
\end{align*}
by choice of $\delta$ sufficiently small, where we used \eqref{cepsLB.eq} in the second line. This shows \eqref{halfalmosthavedoubred.eq} and the claim follows.
\end{proof}

We are left with proving Claim \ref{orighmisAinfty.cl}, which will be a consequence of the maximum principle, the Semmes decomposition above and the theory of weights. The proof is essentially contained \cite{DJ}, but we include a proof that tracks the constants carefully.
 
\begin{proof}[Proof of Claim \ref{orighmisAinfty.cl}] 
By the work of Coifman and Ferfferman \cite{CF-weights}, or rather its (local) generalization to spaces of homogeneous type, to prove the claim it is enough to show the following. For any $\eta \in (0,1)$, there exists $\gamma = \gamma(\eta)\in (0,1)$ such that for every surface ball $\Delta_0 = B(x,s)\cap \pO$ with $x \in \pom$ and $s \in (0, \tau_0]$, if $E \subset \Delta_0$ is a Borel set satisfying $\sigma(E) \ge \gamma \sigma(\Delta_0)$, then $\hm(E) \ge \eta \hm(\Delta_0)$. Here $\tau_0>0$ is a fixed constant whose value is to be specified later. Without loss of generality we assume $x=0$.

Similar to the proof of Claim \ref{halfalmosthalf.cl}, the idea here is also to use the elliptic measure of Lipschitz domain (this time we use $\Omega^+$ instead of $\Omega^-$) to estimate $\omega$. For that purpose we just need a crude version of Corollary \ref{optAinfLip.cor}. Let $\epsilon = 1/2$ be fixed, thus we fix the constants $M^* , \gamma'_\epsilon, \tau'_\epsilon$ accordingly. Set $M= M^* $. We fix $\delta \in (0, \delta_n]$ satisfying $C_1 \delta^{1/4} \leq \gamma'_\epsilon$, then there exists $R_\delta>0$ such that $\Omega$ is a $(\delta, R_\delta)$-chord arc domain. Let $\tau_0 = \min\{R_\delta, \tau'_\epsilon\}/M$ and $s\in (0,\tau_0]$ be arbitrary. Then $\Omega$ has Semmes decomposition in $B(0,Ms)$; moreover by the choice of $\delta$ we may apply Corollary \ref{optAinfLip.cor} to $\widetilde\Omega^+$. Hence in particular, $\widetilde\omega^+ \in A_\infty(\sigma_+)$ on $\Delta_+:=B((0,\varphi^+(0)),s) \cap \Gamma_+$. For $\widetilde \eta >0$ to be determined later, there exists $\gamma = \gamma(\widetilde \eta) >0 $ such that
\begin{equation}\label{eq:Ainftyomp}
	\frac{\sigma_+(F)}{\sigma_+(\Delta_+)} \geq \frac{\gamma}{2} \text{ for a Borel set } F\subset \Delta_+ \implies \frac{\widetilde \omega_+(F)}{\widetilde \omega_+(\Delta_+)} \geq \widetilde \eta. 
\end{equation}

We then use properties $(1)$ and $(3)$ of Semmes decomposition along with the definition of $(\delta, R)$-chord arc domain to ensure that
\[ \sigma(\Delta_+ \cap \Delta_0) \geq (1-\delta) \omega_n s^n - c_1 \exp(-c_2 \delta^{-1/4}) \omega_n (Ms)^n \geq \left(1-\frac{\gamma}{3} \right)  \sigma(\Delta_0). \]
We may need to choose $\gamma$ slightly bigger (depending on the value of $\delta$), to make sure the last inequality holds.
Suppose that $E \subset \Delta_0$ is a Borel set with $\sigma(E) \ge \gamma \sigma(\Delta_0)$. Then
\[\sigma_+(E\cap \Delta_+) = \sigma(E \cap \Delta_+) \ge \frac23 \gamma \sigma(\Delta_0) \ge \frac{\gamma}{2} \sigma_+(\Delta_+),\]
where we used the estimates of $\sigma(\Delta_0), \sigma_+(\Delta_+)$ and that $\delta$ is chosen sufficiently small.
It follows from \eqref{eq:Ainftyomp} that 
\[ \frac{\widetilde \omega_+(E \cap \Delta_+)}{\widetilde \omega_+(\Delta_+)} \geq \widetilde \eta. \]
By the maximum principle and Bourgain's estimate
\[ \widetilde \omega(E) \geq \widetilde \omega_+(E\cap \Delta_+) \approx \frac{\widetilde \omega_+(E\cap \Delta_+) }{\widetilde \omega_+(\Delta_+)} \geq \widetilde \eta. \]
By Lemma \ref{agreeingops.lem} and a simple change of pole argument (or the shaper version, Lemma \ref{Kernelcompare.lem}) we have
\[ \frac{\omega(E)}{\omega(\Delta_0)} \approx \omega^{X_1}(E) \approx \widetilde \omega(E). \]
Hence
\[ \frac{\omega(E)}{\omega(\Delta_0)} \geq C\widetilde \eta = \eta \]
as desired. Here we chose $\widetilde \eta$ to account for the constant $C$.
\end{proof}
As we had reduced the proof of the theorem to Claims \ref{orighmisAinfty.cl} and \ref{halfalmosthalf.cl}, we have proved the theorem.
\end{proof}

\appendix
\section{A brief explanation of the first order approach}\label{Firstorderapp.app}
The goal of this appendix is twofold. Firstly, we justify that the solution to the $L^2$-Dirichlet problem obtained in \cite{AA} satisfies the estimates \eqref{1starpprime.eq} and \eqref{2starpprime.eq}. Whereas, modulo verification of the assumptions, \eqref{1starpprime.eq} is explicitly stated in \cite[Theorem 2.4(ii)]{AA}, the estimate \eqref{2starpprime.eq} requires a deeper understanding of \cite{AA} and is obtained by combining several lemmas and theorems in various sections. We recommend to the interested reader to read \textit{Section 3, Road map to the proofs} in \cite{AA} for the general idea of their first order approach. We give a brief overview here. After the general approach and all relevant notation are laid out, we give a rigorous proof of \eqref{2starpprime.eq} in Subsection \ref{sec:pfntmpert}. Then, we show that when the boundary data satisfies $0\leq f\in C_c^\infty(\mathbb{R}^n)$, the solution above (namely, the solution to the $L^2$-Dirichlet problem obtained in \cite{AA}) agrees with the classical elliptic measure solution (see \eqref{eq:hmsol}). This is carried out in Subsection \ref{sec:uniquesol}.

In what follows we will adopt the notation of \cite{AA} and write\footnote{This is done through a change of basis, so we keep the notion that $\ree_+ =\{(t,x): t > 0\}$.}$(t,x)$ instead of $(x,t)$ and for a vector $\vec{v} \in \co^{n+1}$ write $v = (v_\perp, v_\parallel)$ with $v_\perp$ being a scalar\footnote{The authors work with elliptic systems in \cite{AA}, but our overview will be for equations for simplicity, i.e. $m=1$.}. This change of basis is also reflected in the definition of the coefficient matrix $A$ in any divergence form elliptic operator. Note also that some of the conditions here are often easier to state in the scalar case, but we maintain the notation in \cite{AA} for the ease of identifying estimates therein. 
 
The first-order approach takes its name from the following: Suppose that $u$ is a solution to 
\begin{equation}\label{eq:2nd}
	L_A u := -\div A \nabla u =0 \text{ in } \ree_+ 
\end{equation}  
with $\nabla u \in L^2_{loc}(\mathbb{R}_+; L^2(\rn,\co^{n+1}))$, then its conormal gradient 
\begin{equation}\label{def:fcnm}
	f= \nabla_A u := \begin{bmatrix}
	\partial_{\nu_A} u \\
	\nabla_x u
\end{bmatrix}  \quad \text{ with } \partial_{\nu_A} u = (A \nabla_{t,x} u)_{\perp}
\end{equation} 
 solves the first-order equation
\begin{equation}\label{eq:1st}
\partial_t f + DBf = 0,
\end{equation}
where $D= \left[ \begin{matrix} 
0 & \div_x \\
-\nabla_x & 0
\end{matrix}\right]$ and $B$ is a matrix defined by $A$ as follows. 
We first write $A = \left[
\begin{matrix}A_{\perp\perp} & A_{\perp\parallel} \\
A_{\parallel\perp} & A_{\parallel\parallel}
 \end{matrix}\right]$, where $A_{\perp\perp}$ is a scalar. The strong ellipticity of the matrix $A$ implies $A_{\perp \perp}$ is strictly positive. We thus define
 \begin{equation}\label{def:BbyA}
 	B = \hat A:= \underline{A}(\overline{A})^{-1}= 
\begin{bmatrix}1& 0 \\
A_{\parallel \perp} & A_{\parallel\parallel}
 \end{bmatrix}
\begin{bmatrix}A_{\perp\perp} & A_{\perp\parallel} \\
0& I
 \end{bmatrix}^{-1} = \left[
\begin{matrix}1& 0 \\
A_{\parallel\perp} & A_{\parallel\parallel}
 \end{matrix}\right]
  \left[\begin{matrix}A_{\perp\perp}^{-1} & -A_{\perp\perp}^{-1}A_{\perp\parallel} \\
0 &I 
 \end{matrix}\right].
 \end{equation} 
In fact, if we make the restriction that solutions $f$ to \eqref{eq:1st} belongs to the space $ L^2_{loc}(\mathbb{R}_+; \mathcal{H})$ where
\[ \mathcal{H}: = \left\{g \in L^2(\rn, \co^{n+1}): \curl_x g = 0\right\}, \]
we have that the solutions $u$ to divergence form elliptic operator \eqref{eq:2nd} (with the `gradient bound on slices' as above) are in one-to-one correspondence with solutions $f$ to the first-order equation \eqref{eq:1st}. See \cite[Proposition 4.1]{AA}, where this is worked out in detail and note that this does not require the operator to be $t$-independent. 
Therefore to find solutions to boundary value problems (Neumann BVPs, regularity BVPs, or Dirichlet BVPs) of \eqref{eq:2nd} using first order approach, we need to
\begin{enumerate}
	\item study the well-posedness of the first-order equation \eqref{eq:1st} in an appropriate functional space;
	\item relate and determine the trace of $f$ at $t=0$ from the boundary value to \eqref{eq:2nd}.
\end{enumerate}
Notice that the first task depends on the operator $DB$ and it has nothing to do with what type of BVPs we consider.

\subsection{Well-posedness of the first order equation}
When the operator $A(t,x) = A_0(x)$ is $t$-independent and is a small $L^\infty$-perturbation of a constant matrix (or a real symmetric matrix), the well-posedness of the corresponding first order equation
\begin{equation}\label{eq:1st0}
	\partial_t f + DB_0 f =0, \qquad \text{ with } B_0 = \hat{A}_0
\end{equation} 
has been established in \cite{AAMc}. (This result has been obtained before, in \cite{FJK, AAAHK, AAH}, but we appeal to \cite{AAMc} since the notion of well-posedness there is compatible with \cite{AA}.) The authors remark that the operator $DB_0$ is not a sectorial operator, but instead bisectorial, that is, its spectrum is contained in a double sector around the real axis. This means that the natural operator $e^{-t DB_0}$ associated with the free evolution equation \eqref{eq:1st0} is not well-defined on all of $\mathcal{H} \subset L^2(\rn; \mathbb{C}^{1+n})$ for any $t\neq 0$. Thus we need to split $\mathcal{H}$ into the spectral subspace $E_0^+\mathcal{H}$ for the sector in the right half plane and the spectral subspace $E_0^- \mathcal{H}$ for the sector in the left half plane\footnote{$E^{\pm_0} := \chi^\pm(DB_0)$ provided by the bounded holomorphic calculus, where $\chi^+$ is the indicator of the right-half of the complex plane and $\chi^-$ is the indicator of the left-half of the complex plane. See \cite[Section 4]{AA}, between Proposition 4.1 and Proposition 4.3.}. Moreover, the authors in \cite{AAMc} show that any solution $f$ to \eqref{eq:1st0} in the appropriate functional space (such that $f\in L_{loc}^2(\mathbb{R}_+; \mathcal{H})$ and $\ntt f \in L^2 $) is given by the \textit{generalized Cauchy reproducing formula} 
\begin{equation}\label{eq:freeevol}
	f = C_0^+ f_0 = e^{-tDB_0}E_0^+f_0, \qquad \text{ for some } f_0 \in E_0^+ \mathcal{H}.
\end{equation}   
Additionally, using the notation $f_t = f(t,\cdot)$ it satisfies
\[ \lim_{t\to 0} f_t = f_0 \quad \text{ and } \quad \lim_{t\to \infty} f_t = 0 \]
in the $L^2$ sense.

Since every solution to \eqref{eq:1st0} is given by an explicit formula, to study the well-posedness when the matrix $A(t,x)$ is a (Carleson measure) perturbation of a $t$-independent operator $A_0(x)$, we rewrite its first order equation \eqref{eq:1st} as
\begin{equation}\label{eq:1stptb}
	\partial_t f + DB_0 f = D\E f,
\end{equation}
where $\E(t,x) = B_0(x) - B(t,x)$ denotes the perturbation. We remind the reader that $B= \hat A$ and $B_0 = \hat{A}_0$ are defined as in \eqref{def:BbyA}.
We also remark that if $\|A - A_0\|_{\mathcal{C}}< \infty$ then using the notation in \cite[Lemma 5.5]{AA} (see \cite[Definition 5.4]{AA} for the definition of the operator norm $\|\cdot\|_{*}$)
\[\|\mathcal{E}\|_{*} \lesssim \|\mathcal{E}\|_{\mathcal{C}} =  \|B - B_0\|_{\mathcal{C}} \lesssim \|A - A_0\|_{\mathcal{C}},\]
with implicit constants depending on the dimension and ellipticity constants. 
This is because 
\[ B - B_0 = \underline{A}(\overline{A})^{-1} - \underline{A_0}(\overline{A_0})^{-1}  = [(\underline{A}- \underline{A}_0)(\overline{A})^{-1}] + [\, \underline{A}_0((\overline{A})^{-1} - (\overline{A}_0)^{-1})] \]
and $\Lambda \lesssim \operatorname{Re} A_{\perp\perp}, \operatorname{Re} (A_0)_{\perp\perp} \leq \max\{ \|A\|_{\infty}, \|A_0\|_{\infty}\}$,
which allows us to control both bracketed terms using the Carleson norm of $A - A_0$. Hence under the assumption $\|A- A_0\|_{\mathcal{C}} \ll 1$, the right hand side of \eqref{eq:1stptb} can be thought of as a small error to the free evolution of $\partial_t f + DB_0 f $. The above discussion formally justifies \cite[Theorem 8.2]{AA}, which says solutions to \eqref{eq:1st}, or equivalently \eqref{eq:1stptb}, in the appropriate functional space is of the form
\begin{equation}\label{eq:nfreeevoltmp}
	f_t = C_0^+ h^+ + S_A f_t = e^{-t D B_0}E_0^+ h^+ + S_A f_t, \qquad \text{ for some } h^+ \in E_0^+ \mathcal{H}; 
\end{equation} 
and formally speaking as $t\to 0$ we have $f_t \to f_0 = h^+ + h^-$, where $h^- \in E_0^- \mathcal{H}$ and is determined explicitly.
(Notice that when $h^+ \in E_0^+ \mathcal{H}$, we have $e^{-DB_0} E_0^+ h^+ = e^{-t|DB_0|} h^+ $, see for example the discussion in page 62 of \cite{AA}. So the above formula is the same as that in \cite[Theorem 8.2]{AA}.)
The operator $S_A$ is given formally by 
\begin{equation}\label{def:SA}
	S_A f_t := \int_0^t e^{-(t-s) DB_0} E_0^+ D \E_s f_s ds - \int_t^{\infty} e^{(s-t)DB_0} E_0^- D\E_s f_s ds, 
\end{equation} 
see \cite[Equation (1)]{AA}; for a rigorous treatment see \cite[Proposition 7.1]{AA}. Moreover they also show
\[\|\widetilde{N}_\ast (S_A f)\|_{L^2(\rn)} \lesssim \|\E\|_* \|\ntt f\|_{L^2(\rn)} \lesssim \|A - A_0\|_{\mathcal{C}}\| \widetilde{N}_\ast f\|_{L^2(\rn)}, \]
whereby one concludes the boundedness of $(1 - S_A)^{-1}$  on the space 
\[ \mathcal{X}:= \{f: \ree_+ \to \mathbb{C}^{1+n}; \, \widetilde{N}_\ast f \in L^2\} \] for $\|A - A_0\|_{\mathcal{C}}$ sufficiently small. Therefore one seeks solutions of the form 
\begin{equation}\label{eq:nfreeevol}
	f = (1 - S_A)^{-1}C_0^+ h^+, \qquad \text{ for some } h^+ \in E_0^+ \mathcal{H}.
\end{equation}

We summarize the above discussion. Our goal is to find solutions to divergence form elliptic equations \eqref{eq:2nd} with prescribed boundary values (they could be Neumann, regularity, or Dirichlet boundary values). By the one-to-one correspondence between solutions to \eqref{eq:2nd} and solutions to the first-order equation \eqref{eq:1st}, it suffices to determine $h^+$ using the prescribed boundary value and then use the ansatz \eqref{eq:nfreeevol} to compute $f$. In other words, we need to determine the trace of $f_t$ using the boundary value of $u$.

\subsection{Determining the trace of $f$ by Neumann or regularity boundary values of $u$}
By looking at \eqref{def:fcnm}, how $f$ is defined using $u$, it should be intuitively clear that Neumann BVPs and regularity BVPs are more natural in the first order approach, since the trace of $f_t$ is related naturally to the Neumann or regularity boundary value. To be more precise, when $A= A_0$ is $t$-independent, a solution $f = C_0^+ h^+$ satisfies the Neumann boundary value $(f_0)_{\perp} = \partial_{\nu_{A_0}} u = \varphi $ if and only if its trace $h^+$ solves the equation $\Gamma_{A_0} h^+ = \varphi$, where
\begin{align*}
	\Gamma_{A_0}: & E_0^+ \mathcal{H}  \to L^2(\rn, \mathbb{C}) \\
	& h^+ \mapsto (h^+)_{\perp}
\end{align*}
is the identification map from the trace of $f$ to the Neumann boundary value of the solution $u$ to the divergence form elliptic operator \eqref{eq:2nd}. In other words the well-posedness of the Neumann BVP is equivalent to $\Gamma_{A_0}$ being an isomorphism.
Similarly the well-posedness of the regularity BVP is equivalent to the identification map (with regularity boundary value)
\begin{align*}
	\Gamma_{A_0}: & E_0^+ \mathcal{H} \to \{g \in L^2(\rn, \co^n): \curl_x g = 0\} \\
	& h^+ \mapsto (h^+)_{\parallel} 
\end{align*}  
being an isomorphism. We remark that even for $t$-independent operators, these maps are not always invertible; however $\Gamma_{A_0}$ is invertible if we assume a-priori the well-posedness of BVPs for $L_{A_0}$ (for example see \cite[Corollary 8.6]{AA}).

Now we begin to consider operators with $t$-dependent coefficients. Recall in the above discussion on the well-posedness of equation \eqref{eq:1st}, or equivalently \eqref{eq:1stptb}, a solution $f$ of the form \eqref{eq:nfreeevoltmp} (for some $h^+ \in E_0^+ \mathcal{H}$) has trace
\[ f_0 = h^+ + h^- \text{ and } h^- = \int_0^{\infty} \Lambda e^{-s\Lambda} \widehat{E}_0^- \E_s f_s \, ds \in E_0^- \mathcal{H}, \]
where $\Lambda = |DB_0|$ and $\widehat{E}_0^-$ is defined as \cite[Equation (22)]{AA}. See \cite[Theorem 8.2]{AA} for the precise statement. Therefore $f$ satisfies the Neumann boundary condition $(f_0)_{\perp} = \partial_{\nu_A} = \varphi$ if and only if $h^+$ solves the equation $\Gamma_A h^+ = \varphi$, where $\Gamma_A$ is the map
\begin{align*}
	\Gamma_A: & E_0^+ \mathcal{H} \to L^2 (\rn, \mathbb{C}) \\
	& h^+ \mapsto (f_0)_{\perp} = \left( h^+ + \int_0^{\infty} \Lambda e^{-s \Lambda} \widehat{E}_0^- \E_s f_s ds \right)_{\perp}.
\end{align*}
It follows that the well-posedness\footnote{Here one is seeking solutions in the space $\mathcal{X}$ and the boundedness of $(1 - S_A)^{-1}C_0^+$ as an operator from $E_0^+$ into $\mathcal{X}$ are provided in \cite{AA}.} of Neumann BVPs in appropriate function space $\mathcal{X}$ is equivalent to $\Gamma_A $ being an isomorphism. On the other hand, one can show\footnote{See the proof of \cite[Corollary 8.6]{AA}.}
\[\|\Gamma_A - \Gamma_{A_0}\|_{L^2 \to L^2} \lesssim \|\E\|_* \lesssim \|A-A_0\|_{\mathcal{C}}\]
so that one may deduce the invertibility of $\Gamma_{A}$ from that of $\Gamma_{A_0}$, provided the Carleson norm here is small. 
Thus, the Neumann BVP is well-posed for $L_A$, provided the smallness of the Carleson norm and the well-posedness of the Neumann BVP for $L_{A_0}$. 
The method for the regularity problem is similar (where we instead wish to invert the tangential trace).

\subsection{Dirchlet boundary value problems}
Solving Dirichlet BVPs using the above first order approach is more complicated compared to Neumann BVPs or regularity BVPs; and we will give more details in this section since it is directly related to our proof of \eqref{2starpprime.eq}. It is not obvious how the first order approach applies, due to the lack of identification between the trace of $f$ and the Dirichlet boundary value of $u$.
Instead of the equation \eqref{eq:1st} for the conormal gradient $f$, we consider vector-valued solutions to first order equation
\begin{equation}\label{eq:1stDir}
	\partial_t v + BD v = 0.
\end{equation}
Heuristically, applying $D$ to the equation \eqref{eq:1stDir} gives $(\partial_t + DB)(Dv) = 0$. On the other hand, $u$ solves the divergence form elliptic equation if and only if $(\partial_t + DB)(\nabla_A u) = 0$. By comparing $f= \nabla_A u$ with $f= Dv$ we find that $u=-v_{\perp}$. 
More precisely, for coefficients $A(t,x)$ which are (Carleson measure) perturbations of $t$-independent coefficient $A_0(x)$, we have that solutions to the divergence form elliptic equation \eqref{eq:2nd} obeying a certain square function estimate (that is, $\nabla u \in \mathcal{Y}$ defined below in \eqref{def:Y}) are of the form
\[ u = c- v_{\perp} \]
where $v$ solves the first order equation \eqref{eq:1stDir} and $c\in \mathbb{C}$. 
Moreover, we have
\[ \lim_{t\to 0} u_t = c-(v_0)_{\perp} \quad \text{ and } \lim_{t\to \infty} u_t = c \] in the $L^2$ sense. In particular, if we impose that $u$ has Dirichlet boundary value in $L^2(\rn, \mathbb{C})$, the constant $c$ is zero. 
See \cite[Theorem 9.3]{AA}.
We will follow the first order approach as before to study solutions to equation \eqref{eq:1stDir} and then find an ansatz for solutions to Dirichlet BVPs.

For $t$-independent operators we note that $B_0 D$ is another bisectorial operator, just like $DB_0$. So we define the spectral projections $\widetilde{E}_0^{\pm} = \chi^{\pm}(B_0 D)$ as before, which splits the space $\mathcal{H}$. We make an important remark with regards to $DB_0$ and $B_0 D$:
\begin{equation}\label{eq:cmt}
	B_0 \, b(DB_0) = b(B_0 D) \, B_0
\end{equation}
where $b(\cdot)$ denotes the functional calculus to an operator on $L^2$. (See the discussion in \cite[Section 7]{AA}.) This observation allows us to switch between $DB_0$ and $B_0 D$.
Similar to the argument for $DB_0$, we have that solutions to \eqref{eq:1stDir} obeying a square function estimate are of the form
\begin{equation}\label{eq:vfreeevol}
	v = \widetilde{C}_0^+ v_0 + c = e^{-t B_0 D} \widetilde{E}_0^+ v_0 + c 
\end{equation} 
for a unique $v_0 \in \widetilde{E}_0^+ L^2$ and some $c\in \mathbb{C}^{1+n}$. Therefore, for $t$-independent operators we have the representation formula
\begin{equation}
	u= c- \left(\widetilde{C}_0^+ v_0 \right)_{\perp}, \qquad  v_0 \in \widetilde{E}_0^+ L^2, c\in \mathbb{C}
\end{equation}
for solutions $u$ to Dirichlet BVPs obeying a square function estimate. See \cite[Corollary 9.4]{AA}.

Now we consider perturbations of $t$-independent operators. Recall that solutions $f$ to \eqref{eq:1st} are of the form
\begin{equation}\label{tmp:fDir}
	f_t = e^{-t D B_0} h^+ + S_A f_t, \qquad \text{ or equivalently } f_t = (I-S_A)^{-1} e^{-t DB_0} h^+, 
\end{equation} 
for some $h^+ \in E_0^+ \mathcal{H}$. We remark that to adapt to Dirichlet BVPs, we work with the functional space
\begin{equation}\label{def:Y}
	\mathcal{Y} := \left\{ f: \ree_+ \to \mathbb{C}^{1+n}; \int_0^{\infty} \|f_t\|_{L^2(\rn)}^2 tdt < \infty  \right\}  = L^2(\mathbb{R}_+, tdt; L^2(\rn, \mathbb{C}^{1+n})). 
\end{equation} 
instead of $\mathcal{X}$. Moreover \cite[Proposition 7.1]{AA} shows that 
\[\|S_A\|_{\mathcal{Y} \to \mathcal{Y}} \lesssim \|A - A_0\|_{\mathcal{C}}.\]
Provided that $\|A - A_0\|_{\mathcal{C}}$ is sufficiently small, $(1 - S_A)^{-1}$ exists as a bounded operator on the space $\mathcal{Y}$.

We want to find $v$ satisfying $f= Dv$, so we basically need to factor out $D$ in \eqref{tmp:fDir} (even though $D$ is not injective).
Indeed by \eqref{eq:cmt}, imposing the free evolution term $g:= e^{-tDB_0} h^+$ (the first term in \eqref{tmp:fDir}) in $\mathcal{Y}$ allows us to rewrite 
\begin{equation}\label{tmp:g}
	g= D e^{-t B_0 D} \widetilde{E}_0^+ \widetilde{h}^+ = D \widetilde{C}_0^+ \widetilde{h}^+
\end{equation} 
for some $\widetilde{h}^+ \in \widetilde{E}_0^+ L^2$ determined by $h^+ = D\widetilde{h}^+$.
And formally for $S_A$, we obtain starting from \eqref{def:SA} that $S_A = D \widetilde{S}_A$ where
\begin{equation}\label{def:StA}
	\widetilde{S}_A f_t := \int_0^t e^{-(t-s) B_0 D} \widetilde{E}_0^+ D \E_s f_s ds - \int_t^{\infty} e^{(s-t)B_0 D}\widetilde{E}_0^- D\E_s f_s ds.
\end{equation}
See \cite[Proposition 7.2]{AA} for the rigorous treatment of $\widetilde{S}_A$.
Putting them together we get
\[ f_t = e^{-t D B_0} h^+ + S_A f_t = D \widetilde{C}_0^+ \widetilde{h}^+ + D \widetilde{S}_A f_t,  \]
and thus we can set 
\[ v= \widetilde{C}_0^+ \widetilde{h}^+ + \widetilde{S}_A f_t = \widetilde{C}_0^+ \widetilde{h}^+ + \widetilde{S}_A (I-S_A)^{-1} D \widetilde{C}_0^+ \widetilde{h}^+. \]
In the second equality we substitute the expression for $f_t$ in \eqref{tmp:fDir} and we also use \eqref{tmp:g}. Notice that the first term is exactly the same as \eqref{eq:vfreeevol}, the solution to $\partial_t v + B_0 D v=0$ (the constant $c$ there is always zero for $L^2$-Dirichlet BVPs).
Therefore, to solve the Dirichlet problem to \eqref{eq:2nd}, we make the ansatz
\begin{equation}\label{eq:uDir}
	u= \left( \widetilde{C}_0^+ \widetilde{h}^+ + \widetilde{S}_A (I-S_A)^{-1} D \widetilde{C}_0^+ \widetilde{h}^+ \right)_{\perp}
\end{equation}
for some $\widetilde{h}^+ \in \widetilde{E}_0^+ L^2$.
Moreover, 
\[ \lim_{t\to 0} v_t = v_0 \quad \text{ and } \quad \lim_{t\to \infty} v_t = 0 \]
in the $L^2$ sense. The trace $v_0$ can be written explicitly as
\begin{equation}
	v_0 = \widetilde{h}^+ + \widetilde{h}^- \text{ and } \widetilde{h}^- = - \int_0^{\infty} e^{-s \widetilde{\Lambda}} \widetilde{E}_0^- \E_s f_s \, ds \in \widetilde{E}_0^- L^2,
\end{equation}
with $\widetilde{\Lambda} = |B_0 D|$.
See \cite[Theorems 9.2, 9.3]{AA} and the proof of \cite[Corollary 9.5]{AA}.

It then follows that the solution $u$ has Dirichlet boundary value $\varphi \in L^2(\rn)$ if and only if $\widetilde{h}^+ \in \widetilde{E}_0^+ L^2$ satisfies
\[ \varphi = \lim_{t\to 0} u_t = -(v_0)_{\perp} = \left( - \widetilde{h}^+ + \int_0^{\infty} e^{-s \widetilde{\Lambda}} \widetilde{E}_0^- \E_s f_s \, ds \right)_{\perp}, \]
where $f = (I-S_A)^{-1} D \widetilde{C}_0^+ \widetilde{h}^+$ by the formula \eqref{tmp:fDir}.
In other words, if we define the map
\begin{align*}
	\widetilde{\Gamma}_A: & \widetilde{E}_0^+ L^2 \to L^2(\rn, \mathbb{C}) \\
	& \widetilde{h}^+ \mapsto \left( - \widetilde{h}^+ + \int_0^{\infty} e^{-s \widetilde{\Lambda}} \widetilde{E}_0^- \E_s f_s \, ds \right)_{\perp},
\end{align*}
then $\widetilde{h}^+$ is determined by the equation $\widetilde{\Gamma}_A \widetilde{h}^+ = \varphi$. The well-posedness of Dirichlet BVPs is equivalent to $\widetilde{\Gamma}_A$ being an isomorphism.
In particular, when $A= A_0$ is $t$-independent, this map is just
\begin{align*}
	\widetilde{\Gamma}_{A_0}: & \widetilde{E}_0^+ L^2 \to L^2(\rn, \mathbb{C}) \\
	& \widetilde{h}^+ \mapsto - \left( \widetilde{h}^+ \right)_{\perp}.
\end{align*}
Assuming the Dirichlet problem for $L_{A_0}$ is well-posed, the map $\widetilde{\Gamma}_{A_0}$ is invertible. Since (see the proof of \cite[Corollary 9.5]{AA})
\begin{equation}\label{eq:idDir}
	\|\widetilde{\Gamma}_{A_0} - \widetilde{\Gamma}_{A}\|_{L^2 \to L^2} \lesssim \|\mathcal{E}\|_* \lesssim \|A - A_0\|_{\mathcal{C}},
\end{equation} 
it follows that $\widetilde{\Gamma}_A$ is also invertible provided $\|A-A_0\|_{\mathcal{C}} \ll 1$. Therefore the Dirichlet problem for $L_A$ is also well-posed.

\subsection{Proof of estimates for non-tangential maximal function}\label{sec:pfntmpert}
We are now ready to use the results in \cite{AA} to prove the desired estimates \eqref{1starpprime.eq} and \eqref{2starpprime.eq} for the non-tangential maximal function. Recall that the elliptic matrix we consider can be written of the form 
\[ A(t,x) = A_0(x) + B(t,x) \]
where $A_0(x)$ is a small ($t$-independent) perturbation of a \textit{constant} real elliptic matrix and $B(t,x)$ has small Carleson norm, see \eqref{eq:decompA} and \eqref{def:pert}. (Here we denote the $t$-independent matrix by $A_0(x)$ instead of $A_1(x)$ in \eqref{eq:decompA}, in order to be consistent with the notation in the rest of the Appendix.) The well-posedness for Dirichlet problems is established in \cite{AAH} for elliptic operators whose $t$-independent coefficient matrices are small perturbations of the constant matrix. Thus the Dirichlet problem for $L_{A_0}$ is well-posed if $\epsilon$ in \eqref{def:pert} is sufficiently small. We claim that this solution agrees with the \textit{elliptic measure solution} (as in \eqref{eq:hmsol}) for smooth, compactly supported data and we postpone the proof of this fact to Subsection \ref{sec:uniquesol}.

The estimate \eqref{1starpprime.eq} just follows directly from \cite[Theorem 2.4(ii)]{AA}.
Now we set out to prove \eqref{2starpprime.eq}. We write corresponding solutions to $L_A$ and $L_{A_0}$ as $u_{\varphi, A}$ and $u_{\varphi, A_0}$, respectively.
By the discussion in the above section, we can write
\[ u_{\varphi, A_0} = \left( \widetilde{C}_0^+ \widetilde{\Gamma}_{A_0}^{-1} \varphi \right)_{\perp}, \]
\[ u_{\varphi, A} = \left( \widetilde{C}_0^+ \widetilde{\Gamma}_A^{-1} \varphi + \widetilde{S}_A (I-S_A)^{-1} D \widetilde{C}_0^+ \widetilde{\Gamma}_A^{-1} \varphi \right)_{\perp}. \]
Hence
\begin{align*}
u_{\varphi,A} - u_{\varphi,A_0} &= \left(\widetilde{C}_0^+ (\widetilde{\Gamma}_{A}^{-1}- \widetilde{\Gamma}_{A_0}^{-1})\varphi\right)_\perp 
+ \left( \widetilde{S}_A( I - S_A)^{-1}D \widetilde{C}_0^+ \widetilde{\Gamma}_{A}^{-1}\varphi\right)_\perp
\\& = \Ir + \IIr.
\end{align*}
Notice that $\Ir$ is just $ \widetilde{C}_0^+\tilde{h}^+$ with
\[\tilde{h}^+ := (\widetilde{\Gamma}_{A}^{-1} - \widetilde{\Gamma}_{A_0}^{-1})\varphi \in \widetilde{E}^+_0 L^2. \]
Recall \eqref{eq:idDir} and the invertibility of $\widetilde{\Gamma}_{A_0}$, we have
\begin{align*}
	\|\widetilde{\Gamma}_A^{-1} - \widetilde{\Gamma}_{A_0}^{-1} \|_{L^2 \to L^2} & = \left\|\widetilde{\Gamma}_A^{-1} \left( \widetilde{\Gamma}_{A_0} - \widetilde{\Gamma}_A \right) \widetilde{\Gamma}_{A_0}^{-1} \right\|_{L^2 \to L^2} \\
	& \lesssim \|\widetilde{\Gamma}_A - \widetilde{\Gamma}_{A_0} \|_{L^2 \to L^2} \\
	& \lesssim \|A-A_0\|_{\mathcal{C}},
\end{align*}
provided $\|A-A_0\|_{\mathcal{C}}$ is sufficiently small. It follows then
\[ \|\widetilde{h}^+ \|_{L^2} \lesssim \|A-A_0\|_{\mathcal{C}} \|\varphi\|_{L^2}. \]
As in the proof of \cite[Theorem 10.1]{AA}, we may write $\tilde{h}^+  $ as $\tilde{h}^+ = B_0h^+$ with $h^+ \in E^+_0 \mathcal{H}$ so that
\[ \|\ntt(\Ir)\|_{L^2} =  \|\ntt ( \widetilde{C}_0^+ B_0h^+)\|_{L^2} = \|\ntt (B_0 C_0 h^+)\|_{L^2} \approx \|C_0 h^+\|_{\mathcal{X}} \lesssim  \|h^+\|_{L^2} \approx  \|\tilde{h}^+\|_{L^2},\]
where we use \eqref{eq:cmt}, \cite[Theorem 5.2]{AA} and the accretivity of $B_0$.
Thus
\[\|\ntt^{3/2} (\Ir)\|_{L^2} \le \|\ntt (\Ir)\|_{L^2} \lesssim \|A - A_0\|_{\mathcal{C}}\|\varphi\|_{L^2}, \]
as desired. 
To handle $\IIr$ we use \cite[Lemma 10.2]{AA} which says that 
\[\|\ntt^{3/2}((\widetilde{S}_A f)_\perp)\|_{L^2} \lesssim \|\E\|_{*} \|f\|_{\mathcal{Y}} \lesssim \|A - A_0\|_{\mathcal{C}}\|f\|_{\mathcal{Y}}. \]
(This is why we have non-tangential maximal function with power $3/2$ in \eqref{2starpprime.eq}.) 
 Thus, to obtain a desirable bound for $\IIr$ it is enough to show
\begin{equation}\label{eq:tmpNII}
	\|(I - S_A)^{-1}D \widetilde{C}_0^+ \tilde{h}^+ \|_{\mathcal{Y}} \lesssim \|\tilde{h}^+\|_{L^2}, \qquad \text{ for any } \tilde{h}^+ \in \widetilde{E}^+_0 L^2,
\end{equation} 
where we used that $\widetilde{\Gamma}_{A}^{-1}$ is an isomorphism to exchange $\widetilde{\Gamma}_{A}^{-1}\varphi$ for $\tilde{h}^+$. Recall (see \cite[Proposition 7.1]{AA}) that $(I - S_A)^{-1}: \mathcal{Y} \to \mathcal{Y}$  is bounded; moreover by the accretivity of $B_0$ and the square function estimate for the operator $B_0 D$ we have
\[\|D \widetilde{C}_0^+ \tilde{h}^+\|_{\mathcal{Y}} \approx \|B_0De^{-tB_0 D}\tilde{h}^+\|_{\mathcal{Y}} \approx \|\tilde{h}^+\|_{L^2}, \quad \forall \tilde{h}^+ \in \widetilde{E}^+_0 L^2.\]
This finishes the proof of \eqref{eq:tmpNII} and therefore
\begin{align*}
	\|\ntt^{3/2}(\IIr) \|_{L^2} & \lesssim \|A-A_0\|_{\mathcal{C}} \| (I - S_A)^{-1}D \widetilde{C}_0^+ \widetilde{\Gamma}_A^{-1} \varphi  \|_{\mathcal{Y}} \\
	& \lesssim \|A-A_0\|_{\mathcal{C}} \|\widetilde{\Gamma}_A^{-1} \varphi \|_{L^2} \\
	& \lesssim \|A-A_0\|_{\mathcal{C}} \|\varphi\|_{L^2}.
\end{align*} 
This concludes the proof of \eqref{2starpprime.eq} modulo showing that the solution here agrees with the elliptic measure solution, when the boundary data is smooth.

\subsection{For smooth data the solution in \cite{AA} agrees with the elliptic measure solution}\label{sec:uniquesol} It remains to show that given $0\leq f \in C_c^\infty(\rn)$ the unique $L^2$-solution $u$ in the sense of \cite{AA} agrees with the elliptic measure solution $u_f$ as in \eqref{eq:hmsol}.\footnote{This is not without cause, since in general (for example, when the coefficient matrix is non-symmetric), even with smooth boundary data different notions of solutions may not agree, see the example in \cite{Axel10}. Even for the Laplacian in the upper half space, it is well known that solutions to the Dirichlet problem are not unique.} (It suffices to consider functions $0\leq f\in C_c^\infty(\mathbb{R}^n)$, since this is enough to characterize the elliptic measure, or equivalently the Poisson kernel. Assuming that $u\in C(\overline{\mathbb{R}^{n+1}_+})$, by the maximum principle it suffices to show $u-u_f(X) \to 0$ as $|X| \to \infty$. Since we know that $u_f(X) \to 0$ as $|X|\to \infty$\footnote{This can be proven by comparing it with the elliptic measure of a compact set. Let $K = \supp f$ and assume $K \subset B_R$ for some $R>0$. Since $0\leq f \in C_c^\infty(\RR^n)$, we have $u_f(X) \leq \omega^X(\Delta_R) \cdot \sup f$. On the other hand, let $A_R$ denote the interior corkscrew point for the ball $B_R$. By Lemma \ref{CFMS.lem} and the estimate of the Green's function, when $X \in \RR^{n+1}_+ \setminus B_{4R}$ we have $\omega^X(\Delta_R) \approx G(X, A_R ) \cdot R^{n-1} \lesssim \frac{R^{n-1}}{|X-A_R|^{n-1}}$. The right hand side converges to zero as $|X| \to \infty$. Therefore $\omega^X(\Delta_R) \to 0$, and thus $u_f(X) \to 0$ as $|X| \to \infty$.}, 
 to prove $u\equiv u_f$ it suffices to show $u$ is continuous all the way to the boundary and $u(X) \to 0$ as $|X| \to \infty$. In fact, we will show in the following lemma that $u \in \dot{C}^\beta(\overline{\ree_+})$, where $\dot{C}^\beta$ is the homogeneous H\"older space:
\[\|v\|_{\dot{C}^\beta(E)} := \sup_{\substack{x, y \in E \\ x \neq y}} \frac{|v(x) - v(y)|}{|x - y|^\beta}.\]
for a function $v:E \to \mathbb{R}$.

\begin{lemma}[\cite{HMiMo,HMayMour}]\label{Cbetasolv.lem}
Let 
\[A(t,x) = A_0(x) + B(t,x)\]
be a real matrix such that $A_0(x) = A_c + \widetilde{B}(x)$, $A_c$ is a real constant elliptic matrix, $\|\widetilde{B}\|_\infty < \epsilon_0$ and $\|B(t,x)\|_{\mathcal{C}} < \epsilon_0$. If $\epsilon_0$ is small depending on the ellipticity of $A_c$ and dimension then there exists $\beta \in [0, 1)$ depending on ellipticity, the dimension and $\epsilon_0$ such that the following holds. If $f \in C_c^\infty(\rn)$ and $u$ is the (unique) solution to the $L^2$-Dirichlet problem for $L = -\div A\nabla$ on $\ree_+$ produced above, then 
\begin{equation}
\|u\|_{\dot{C}^\beta(\overline{\ree_+})} \lesssim \|f\|_{\dot{C}^\beta(\rn)},
\end{equation}
where the implicit constant depends on dimension and ellipticity of $A_c$ 

\end{lemma}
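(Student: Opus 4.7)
The plan is to combine interior De Giorgi–Nash–Moser H\"older regularity with a boundary oscillation-decay estimate, following the strategy of \cite{HMiMo, HMayMour}. From the non-tangential bound $\|\ntt u\|_{L^2(\rn)} \lesssim \|f\|_{L^2(\rn)}$ established in Section~\ref{sec:pert}, Moser's local boundedness yields $u \in L^\infty_{\loc}(\ree_+)$, and then interior De Giorgi–Nash–Moser theory gives $u \in C^{\alpha_0}_{\loc}(\ree_+)$ for some $\alpha_0 \in (0,1)$ depending only on dimension and the ellipticity of $A_c$, with quantitative estimates
\[
\|u\|_{\dot C^{\alpha_0}(B(X, r/2))} \lesssim r^{-\alpha_0} \|u\|_{L^\infty(B(X,r))}, \qquad B(X,r) \subset \ree_+.
\]
Since $L$ has real coefficients and $f$ is bounded, the maximum principle (applied through the non-tangential $L^2$-convergence of $u$ to $f$) yields the global bound $\|u\|_{L^\infty(\ree_+)} \le \|f\|_{L^\infty(\rn)}$.

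For boundary regularity, I would fix a boundary point $X_0 = (x_0, 0) \in \partial \ree_+$ and consider $v := u - f(x_0)$, which solves $Lv = 0$ with boundary data $\widetilde f := f - f(x_0)$ satisfying $|\widetilde f(y)| \le |y - x_0|^\beta \|f\|_{\dot C^\beta(\rn)}$. The goal is the oscillation-decay estimate
\[
\sup_{X \in B(X_0, r) \cap \ree_+} |v(X)| \lesssim r^\beta \|f\|_{\dot C^\beta(\rn)}, \qquad r > 0.
\]
Representing $v(X)$ as an integral against the elliptic measure $\hm^X$ (available from $L^2$-solvability), I would split the integral dyadically into annular pieces $\{2^k r \le |y - x_0| < 2^{k+1} r\}_{k \ge 0}$, bound the $k$-th piece by $(2^k r)^\beta \hm^X(\Delta(x_0, 2^{k+1} r)) \|f\|_{\dot C^\beta(\rn)}$, and sum using the doubling and the $A_\infty$/reverse H\"older property of $\hm^X$ (equivalently the Poisson-kernel estimate from Theorem~\ref{UHSperturbfinal.thrm}) to control the growth at large scales.

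The main obstacle will be rigorously justifying this elliptic-measure representation for the $L^2$-solution from \cite{AA} without relying on the identification that is itself the goal of this subsection. To sidestep the circularity, I would instead prove the boundary oscillation estimate directly by a Caccioppoli-plus-iteration (De Giorgi-type) argument on $v$: at each dyadic scale $2^{-k} r$, establish a fixed fractional decay in the sup-norm of $v$ from the H\"older decay of $\widetilde f$ together with the boundary vanishing regularity of solutions, which is classical for the upper half space (Lemma~\ref{HCatbdry.lem} applied to an auxiliary solution obtained by freezing the data outside a small ball near $x_0$). Summing the geometric decays yields the claimed estimate, with exponent $\beta$ any value less than $\alpha_0$; this is essentially a Campanato-space characterization of H\"older continuity adapted to boundary value problems, as carried out in \cite{HMiMo, HMayMour} in the complex-coefficient setting.

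Finally, I would patch the interior and boundary H\"older estimates by a standard chaining argument: given $X, Y \in \overline{\ree_+}$, distinguish the case $|X - Y| \le \tfrac{1}{2}\min(\dist(X, \partial \ree_+), \dist(Y, \partial \ree_+))$, where the interior estimate alone suffices, from its complement, where one compares $X$ and $Y$ to their boundary projections and applies the boundary estimate on each projection together with interior estimates on the vertical segments. Choosing $\beta$ small enough (depending on ellipticity and $\epsilon_0$) then yields the global estimate $\|u\|_{\dot C^\beta(\overline{\ree_+})} \lesssim \|f\|_{\dot C^\beta(\rn)}$ claimed in the lemma.
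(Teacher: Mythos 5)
Your proposal takes a fundamentally different route from the paper: you attempt to reprove the boundary H\"older estimate from scratch via De Giorgi iteration, whereas the paper's proof is a citation argument. The paper applies \cite[Theorem 1.4]{HMiMo} (for $t$-independent constant-perturbation coefficients) and then \cite[Theorem 1.35]{HMayMour} (for the Carleson perturbation) to obtain solvability of the $\dot C^\beta$-Dirichlet problem, and the real mathematical content of its proof is a careful \emph{compatibility} check: that the $\dot C^\beta$-solution and the $L^2$-solution coincide when the data lies in $\dot C^\beta(\rn)\cap L^2(\rn)$. That compatibility is far from automatic in the upper half space (even for the Laplacian, the Dirichlet problem has non-unique solutions), and the paper verifies it by observing both solution maps agree for constant coefficients (convolution with the same Poisson kernel) and that the perturbation theorems in \cite{HMiMo, HMayMour} preserve the layer-potential representations of both solutions.

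Your argument has genuine gaps beyond the circularity you already flagged. First, $\|u\|_{L^\infty}\le\|f\|_{L^\infty}$ does not follow from the maximum principle here: the maximum principle needs continuity up to the boundary, which is precisely what you are trying to establish; $L^2$-convergence of Whitney averages to $f$ does not give a global pointwise bound on the $L^2$-solution of \cite{AA}. Second, invoking Lemma \ref{HCatbdry.lem} (or any boundary-vanishing oscillation decay) on an auxiliary function obtained by ``freezing the data'' presupposes that your auxiliary solution \emph{agrees} with $u$ near $x_0$ — but without a uniqueness theorem identifying the first-order $L^2$-solution with a classical/variational solution, you cannot glue such local comparisons. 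This is again the compatibility issue, reappearing one layer down. The paper avoids this by not attempting a direct iteration at all: it uses pre-established $\dot C^\beta$-solvability results and reduces the lemma to the compatibility of two known solution maps.
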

\begin{proof} Here we appeal to the `second order methods' noting that the $L^2$ solution to the Dirichlet problem above is unique so that we are working with the same solution.
The lemma is, in fact, a `direct' result of \cite[Theorem 1.4]{HMiMo} and \cite[Theorem 1.35]{HMayMour}. First note that $A_c$ is constant so that we may assume $A_c$ is symmetric as demonstrated in Lemma \ref{constnondegen.lem} above. Thus, \cite[Theorem 1.4]{HMiMo} gives the solvability of the $\dot{C}^{\beta'}$ Dirichlet problem for coefficients $A_0$ with $\beta' \in (0, \beta'_0)$ depending on dimension, ellipticity of $A_c$ and $\epsilon_0$, provided $\epsilon_0$ is small enough. Similarly, (at the cost of a smaller $\beta$) \cite[Theorem 1.35]{HMayMour} can be applied to perturb from the coefficients $A_0$ to $A$, giving the solvability of the $\dot{C}^\beta$ Dirichlet problem with $\beta \in (0, \beta_0'/2)$ for $L = -\div A\nabla$ provided $\epsilon_0$ is small enough. Here is where one should be careful: We need to check that the $\dot{C}^\beta$ and $L^2$ solutions agree when the data is in $\dot{C}^\beta(\rn) \cap L^2(\rn)$, we will call this `compatibility'. Note that the $\dot{C}^\beta$ and $L^2$ solutions agree for the operator with coefficients $L_c = -\div A_c \nabla$ because they are both given by convolution with a elliptic-Poisson kernel (see Theorems 1.4 and 3.3 in \cite{MarinMartellMitrea}). The interested reader can carefully check\footnote{This will be a result of the boundary trace of the layer potentials being perturbative in the norms $\dot{C}^\alpha$ and $L^2$: In the case of \cite{HMiMo} this is done in \cite[Section 4]{HMiMo} and in the case of \cite{HMayMour} in \cite[Proposition 7.21]{HMayMour}.} that the perturbations \cite[Theorem 1.4]{HMiMo} and \cite[Theorem 1.35]{HMayMour} preserve this compatibility.  

\end{proof}

With the lemma in hand, we are going to prove $u(X) \to 0$ as $|X| \to \infty$.
By definition it holds that
\[\|\widetilde{N}_\ast(u)\|_{L^2(\rn)} \lesssim \|f\|_{L^2(\rn)}\]
which (by interior estimates) implies that 
\[\sup_{t > 0} \|u(t,\cdot)\|_{L^2(\rn)} \lesssim\|f\|_{L^2(\rn)}.\]

Next, we see for $s \in \mathbb{R}_+$, by breaking up $\rn$ into cubes of side length roughly $s$ and using Caccioppoli's inequality, that 
\[\int_s^{2s} \int_{\rn} |\nabla u|^2 \, dx \, dt \lesssim \frac{1}{s^2} \int_{s/2}^{5s/2} \int_{\rn} |u|^2\, dx \, dt \lesssim \frac{\|f\|^2_{L^2(\rn)}}{s},\] 
where we used $\sup_{t > 0} \|u(t,\cdot)\|_{L^2(\rn)} \lesssim\|f\|_{L^2(\rn)}$.
Thus, 
\[\int_{s}^\infty \int_{\mathbb{R}^n} |\nabla u|^2 \, dx \, dt  = \sum_{k = 0}^\infty \int_{2^ks}^{2^{k+1}s} \int_{\mathbb{R}^n} |\nabla u|^2 \, dx \, dt \lesssim \sum_{k = 0}^\infty \frac{\|f\|^2_{L^2(\rn)}}{2^k s}  \lesssim \frac{\|f\|^2_{L^2(\rn)}}{s},\]
written compactly $\|\nabla u\|_{L^2(\ree_{s})} \lesssim \tfrac{\|f\|_{L^2(\rn)}}{\sqrt{s}}$, where $\ree_s = \{(t,x): t > s, x \in \mathbb{R}^n\}$. It is a fact\footnote{See \cite[Theorem 1.78]{MalyZiemer}. One can adapt the proof there using nested cubes (this time not concentric dilates though) which exhaust $\ree_s$.}  that there exists a constant $c$ such that $u - c \in Y^{1,2}(\ree_s):= \{v \in L^{\frac{2n}{n -1}}(\ree_s): \nabla v \in L^2(\ree_s)\}$, but since $\sup_{t > 0} \|u(t,\cdot)\|_{L^2(\rn)} < +\infty$ it must be the case that $c = 0$ and hence $u \in Y^{1,2}(\ree_s)$. Moreover, by the Poincar\'e Sobolev inequality 
\begin{equation}\label{PSbound.eq}
\|u\|_{L^{\frac{2n}{n -1}}(\ree_s)} \lesssim \|\nabla u\|_{L^2(\ree_s)} \lesssim  \|f\|_{L^2(\rn)}/\sqrt{s},
\end{equation}
where we used our estimate established above.

Now fix $\gamma > 0$ and let $C_1 := \|u\|_{\dot{C}^\beta(\overline{\ree_+})} < \infty$. Our goal is to show that there exists $M_\gamma$ so that if $|X| > M_\gamma$ then $|u(X)| \le \gamma$. Note that if there exists $X= (t,x)$ such that $|u(X)| > \gamma$ then we have that $|u(Y)| > \gamma/2$ for all $Y \in \ree_+$ such that $|X - Y| < (\gamma/2C_1)^{1/\beta}$. In particular, if $s \in \re_+$ and there exists $X= (t,x)$ such that $|u(X)| > \gamma$ with $t \ge s$ then
\[\|u\|_{L^{\frac{2n}{n -1}}(\ree_s)} \gtrsim  \gamma^{\frac{n^2-1}{\beta 2n} + 1}\]
where we used that $|u(Y)| > \gamma/2$ in $B(X, (\gamma/2C_1)^{1/\beta}) \cap \ree_s$. Thus, choosing $s_0$ large enough so that 
\[\frac{\|f\|_{L^2(\rn)}}{\sqrt{s_0}} \ll \gamma^{\frac{n^2-1}{\beta 2n} + 1} \]
we have from \eqref{PSbound.eq} that $\|u\|_{L^\infty(\ree_{s_0})} \le \gamma$. 

Having established the bound for $t \ge s_0$, it suffices to show that there exists $A$ so that if $|x| > A$ and $t \in [0, s_0]$ then $|u(t,x)|\le \gamma$. To do this, we use that $\sup_{t > 0} \|u(t,\cdot)\|_{L^2(\rn)} < \|f \|_{L^2(\rn)}$ and hence
\[\|u\|_{L^2([0, s_0] \times \rn)} \lesssim \sqrt{s_0} \|f\|_{L^2(\mathbb{R}^n)}.\]
Arguing as above, it would be impossible to have $X_k = (x_k, t_k)$ with $|x_k| \to \infty$ and $t_k \in [0,s_0]$ such that $|u(X_k)| \ge \gamma$. Indeed, for this would give that $\|u\|_{L^2([0, s_0] \times \rn)} = \infty$. This concludes the proof that $u(X) \to 0$ as $|X| \to \infty$.

\section{From \eqref{cond:d} to \eqref{cond:c}}\label{Koreyexplain.sect}
The goal of this section is to prove Lemma \ref{Koreyexplain.lem}, which will immediately show \ref{cond:d} implies \ref{cond:c}.
We first make an observation that allows us to use the work of Korey \cite{Korey} with impunity in the setting of this work. Specifically, we would like to use the consequences of \cite[Theorem 6]{Korey} and the portion of \cite[Theorem 10]{Korey} unique to the work there.
\begin{lemma}\label{sigmadiffuse.lem}
Let $\Gamma \subset \ree$ be a closed set with locally finite $\HH^n$ measure, that is $\HH^n(\Gamma \cap B(0, r)) < \infty$ for every $r > 0$. Set $\sigma = \HH^n|_\Gamma$. Then for every Borel set of $E \subseteq \Gamma$ and $\tau \in [0,1]$ there exists a Borel $F$ with $\sigma(F) = \tau\sigma(E)$.
\end{lemma}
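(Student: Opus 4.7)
The plan is to prove this by a direct slicing argument. Trivial cases are immediate: $\tau = 0$ gives $F = \emptyset$, and $\sigma(E) = \infty$ with $\tau > 0$ gives $F = E$ since then $\tau \sigma(E) = \infty$. I therefore focus on $\tau \in (0,1)$ and $0 < \sigma(E) < \infty$.

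I will consider the monotone nondecreasing function
\[ \psi(r) := \sigma(E \cap \{y \in \ree : y_{n+1} < r\}), \]
which is left-continuous, tends to $0$ as $r \to -\infty$ and to $\sigma(E)$ as $r \to \infty$, and has at most countably many jumps, of sizes $\sigma(E \cap \{y_{n+1} = r\})$. Since $\tau\sigma(E) \in (0, \sigma(E))$, there is a unique $r_0 \in \mathbb{R}$ with $\psi(r_0) \le \tau\sigma(E) \le \psi(r_0+)$, where $\psi(r_0+)$ denotes the right limit. If equality holds on either side I simply take $F = E \cap \{y_{n+1} < r_0\}$ or $F = E \cap \{y_{n+1} \le r_0\}$ respectively and I am done.

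The nontrivial case is $\psi(r_0) < \tau\sigma(E) < \psi(r_0+)$, i.e.\ $\tau\sigma(E)$ falls strictly inside a jump of $\psi$; this forces $E_0 := E \cap \{y_{n+1} = r_0\}$ to have positive $\sigma$-measure, and I now need to peel off a Borel subset of $E_0$ of prescribed mass $\tau\sigma(E) - \psi(r_0) \in (0, \sigma(E_0))$. To do this I repeat the slicing procedure \emph{within} the hyperplane $\{y_{n+1} = r_0\}$, using the coordinate $y_n$: set $\psi_0(r) := \sigma(E_0 \cap \{y_n \le r\})$. The one substantive step of the whole proof is to verify that $\psi_0$ is now \emph{continuous}. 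The jump of $\psi_0$ at $r$ equals $\sigma(E_0 \cap \{y_n = r\})$, and this is bounded by the $\HH^n$-measure of the $(n-1)$-dimensional affine subspace $\{y_{n+1} = r_0,\ y_n = r\} \subset \ree = \mathbb{R}^{n+1}$; since $n \ge 1$, an $(n-1)$-flat in $\mathbb{R}^{n+1}$ is $\HH^n$-null, so every jump of $\psi_0$ vanishes. With $\psi_0$ continuous and monotone from $0$ to $\sigma(E_0)$, the classical intermediate value theorem produces $r_1$ with $\psi_0(r_1) = \tau\sigma(E) - \psi(r_0)$, and I take
\[ F := \bigl(E \cap \{y_{n+1} < r_0\}\bigr) \cup \bigl(E_0 \cap \{y_n \le r_1\}\bigr), \]
a Borel subset of $E$ with $\sigma(F) = \tau\sigma(E)$.

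The essentially only obstacle is the codimension-two dimensional count that kills the jumps of $\psi_0$; the rest is left-continuity of $\psi$, the IVT, and the standard continuity of measure.
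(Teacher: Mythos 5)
Your proof is correct, and it takes a different route from the paper. The paper slices $E$ by concentric balls $B(0,r)$, locates a critical radius $r_0$, and observes that the deficit is carried on the sphere $\partial B(0,r_0)$; it then finishes by asserting that $\HH^n|_{\partial B(0,r_0)}$ ``already has the diffusivity property'' --- which is morally the same statement being proved, outsourced to spherical measure --- and it reduces, with the word ``clearly'' and no argument, to the case $\tau=1/2$ (the general $\tau$ then follows by iterated bisection along a binary expansion of $\tau$, but this is not spelled out). You instead slice by coordinate half-spaces $\{y_{n+1}<r\}$, treat an arbitrary $\tau\in(0,1)$ directly, and, rather than outsourcing the residual mass on the critical hyperplane, resolve it by a second slicing with $\{y_n\le r\}$; the key observation that the codimension-two flat $\{y_{n+1}=r_0,\ y_n=r\}$ is $\HH^n$-null (its Hausdorff dimension is $n-1<n$) kills all jumps of the second distribution function and makes the intermediate value theorem directly applicable. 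The net effect is a more self-contained proof: it avoids both the unjustified reduction to $\tau=1/2$ and the appeal to diffusivity of surface measure on a sphere, at the small cost of two slicing stages instead of one. One trivial wording quibble: the $r_0$ satisfying $\psi(r_0)\le\tau\sigma(E)\le\psi(r_0+)$ need not be \emph{unique} (a plateau of $\psi$ at height $\tau\sigma(E)$ gives a whole interval of valid $r_0$'s), but existence is all you use, so nothing breaks.
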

\begin{proof}
Fix $E$ as above. Clearly, we only need to show result for $\tau = 1/2$. We see by monotonicity of measure $\sigma(B(0,R) \cap E) > (1/2)\sigma(E)$ for some $R$ large enough. Let $r_0 := \sup\{r : \sigma(B(0,r) \cap E) < (1/2)\sigma(E)\}$.  By monotonicity of measure $\sigma(E \cap \partial B(0,r_0)) +  \sigma(B(0,r_0) \cap E)  \ge (1/2)\sigma(E)$ and $\sigma(B(0,r_0) \cap E) \le (1/2)\sigma(E)$. Thus, $\tau' \sigma(E \cap \partial B(0,r_0)) + \sigma(B(0,r_0) \cap E) = (1/2)\sigma(E)$ for some $\tau' \in [0,1]$. Next we note that $\widetilde\sigma := H^n|_{\partial B(0,r_0)}$ already has the diffusivity property, that is, for every $E' \subset \partial B(0,r_0)$ and $\tau' \in [0,1]$ there exists $F' \subseteq E'$ with $\widetilde \sigma(F') = \tau' \widetilde \sigma(E')$ so that we may take $E' = E \cap \partial B(0,r_0)$ and find $F' \subseteq E \cap \partial B(0,r_0)$ so that $\sigma(F \cap \partial B(0,r_0)) = \tau'\sigma(E \cap \partial B(0,r_0))$. Setting $F = F' \cup (B(0,r_0) \cap E)$ we have that $\sigma(F) = (1/2)\sigma(E)$.
\end{proof}

\begin{lemma}\label{Koreyexplain.lem}
Let $\Gamma \subset \ree$ be a closed set with locally finite $\HH^n$ measure and set $\sigma = \HH^n|_\Gamma$. Suppose $x_0 \in \Gamma$ and $r_0 > 0$ are such that $\sigma(\Delta_0) > 0$, where $\Delta_0 = B(x_0,r_0) \cap \Gamma$. Suppose $k \in L^1_{loc}(d\sigma)$ with $k \ge 0$ and set $w = k \, d\sigma$. There exists $\epsilon_0$ and $c$ ,absolute constants, so that the following holds. If there exists $\epsilon > 0$ such that for every $F \subset \Delta_0$ with $\sigma(F)/\sigma(\Delta_0) = 1/2$ it holds that $w(F)/w(\Delta_0) \le 1/2 + \epsilon$ for some $\epsilon \in (0, \epsilon_0)$ then
\[\left( \fint_{\Delta(x_0,s)} k \, d\sigma \right)^2 \leq (1+c\epsilon) \left( \fint_{\Delta(x_0,s)}  k^{1/2} \, d\sigma \right).\]
\end{lemma}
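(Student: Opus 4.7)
The plan is to interpret the conclusion as the natural $B_2$-type inequality for $k^{1/2}$ (which is what is needed for the application to VMO, and which matches the spirit of conditions (c) and (e) in \cite[Theorem 10]{Korey}), namely $\fint_{\Delta_0} k\,d\sigma \le (1+c\epsilon)\bigl(\fint_{\Delta_0} k^{1/2}\,d\sigma\bigr)^2$. The idea is to use the ``half-measure'' hypothesis to get $L^1$-closeness of $k$ to its mean, and then transfer this to $k^{1/2}$ via the elementary pointwise inequality $|k^{1/2}-m^{1/2}| \le |k-m|/m^{1/2}$.

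First I would use the diffusivity of $\sigma$ provided by Lemma \ref{sigmadiffuse.lem} to construct an ``upper half'' set $F^\ast \subset \Delta_0$ with $\sigma(F^\ast) = \sigma(\Delta_0)/2$ and $k|_{F^\ast} \ge \lambda \ge k|_{\Delta_0\setminus F^\ast}$ for some median value $\lambda$. Specifically, I would set $\lambda := \inf\{t : \sigma(\{k>t\}\cap \Delta_0)\le \sigma(\Delta_0)/2\}$, take $\{k>\lambda\}$ and augment it with a $\sigma$-portion of $\{k=\lambda\}$ of the appropriate measure (the latter is possible by Lemma \ref{sigmadiffuse.lem} applied to $\{k=\lambda\}\cap \Delta_0$).

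Next, writing $m := \fint_{\Delta_0} k\,d\sigma$ and applying the hypothesis to both $F^\ast$ and $\Delta_0\setminus F^\ast$ yields $|w(F^\ast)/w(\Delta_0) - 1/2|\le \epsilon$, which by the choice of $F^\ast$ translates into
\begin{equation*}
\int_{\Delta_0} |k-\lambda|\,d\sigma \;=\; \int_{F^\ast} k\,d\sigma - \int_{\Delta_0\setminus F^\ast} k\,d\sigma \;\le\; 2\epsilon\, m\,\sigma(\Delta_0).
\end{equation*}
Since $|m-\lambda| = |\fint(k-\lambda)| \le \fint |k-\lambda| \le 2\epsilon m$, the triangle inequality gives $\fint_{\Delta_0} |k-m|\,d\sigma \le 4\epsilon m$.

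Finally, from $(k^{1/2}-m^{1/2})(k^{1/2}+m^{1/2}) = k-m$ and $k^{1/2}+m^{1/2}\ge m^{1/2}$ (assuming $m>0$; the case $m=0$ is trivial since then $k=0$ $\sigma$-a.e.\ on $\Delta_0$), I obtain the pointwise bound $|k^{1/2} - m^{1/2}|\le |k-m|/m^{1/2}$. Integrating and using the previous step,
\begin{equation*}
\Bigl|\fint_{\Delta_0} k^{1/2}\,d\sigma \,-\, m^{1/2}\Bigr| \;\le\; \fint_{\Delta_0} |k^{1/2}-m^{1/2}|\,d\sigma \;\le\; \frac{1}{m^{1/2}}\fint_{\Delta_0}|k-m|\,d\sigma \;\le\; 4\epsilon\, m^{1/2},
\end{equation*}
so $\fint_{\Delta_0} k^{1/2}\,d\sigma \ge (1-4\epsilon)m^{1/2}$. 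Squaring and inverting (for $\epsilon_0$ chosen small enough, say $\epsilon_0 = 1/16$, so that $(1-4\epsilon)^{-2}\le 1+c\epsilon$ with $c=16$) yields the desired inequality. The only nontrivial point is the existence of the half-measure upper-level set, which is exactly what Lemma \ref{sigmadiffuse.lem} provides; the rest is a clean elementary computation, so I expect no essential obstacle.
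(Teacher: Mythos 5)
Your proof is correct, and it also correctly diagnoses that the inequality as written in the lemma should be read as $\fint_{\Delta_0} k\,d\sigma \le (1+c\epsilon)\bigl(\fint_{\Delta_0} k^{1/2}\,d\sigma\bigr)^2$, which is what the paper's proof actually establishes (the paper proves $\bigl(\fint f^2\bigr)^{1/2}\le (1+c\epsilon)\fint f$ with $f=\sqrt{k}$, and squares). Your route, however, is genuinely different from the paper's. The paper follows Korey's scheme directly: after constructing the median-split sets $E,F$ via Lemma \ref{sigmadiffuse.lem}, it runs a chain of inequalities $\fint_{\Delta_0} f^2\le (1-2\epsilon)^{-1}\fint_F f^2\le (1-2\epsilon)^{-1}\sqrt{m_{k,\Delta_0}}\fint_F f$, then bounds the median $m_{k,\Delta_0}\le (1+2\epsilon)\fint_{\Delta_0} k$ using the hypothesis on $E$ and closes by a Cauchy--Schwarz-style absorption. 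You instead extract the quantitative information as an $L^1$-oscillation bound $\fint_{\Delta_0}|k-\lambda|\le 2\epsilon\,m$ (hence $\fint|k-m|\le 4\epsilon m$) and transfer to $k^{1/2}$ via the elementary pointwise estimate $|k^{1/2}-m^{1/2}|\le |k-m|/m^{1/2}$. Your argument is a bit more modular and perhaps easier to port to other powers, while the paper's is closer to the published source it cites; both arrive at the same constant-factor conclusion. One small remark: you do not actually need the hypothesis applied to $\Delta_0\setminus F^\ast$ — since $F^\ast$ is the upper-half set, $w(F^\ast)\ge w(\Delta_0)/2$ automatically, and the single application $w(F^\ast)/w(\Delta_0)\le 1/2+\epsilon$ already yields $\int_{\Delta_0}|k-\lambda|\,d\sigma = 2w(F^\ast)-w(\Delta_0)\le 2\epsilon\,w(\Delta_0)$.
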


\begin{proof}
We start the proof exactly as in the proof (d) to (c) in \cite[Theorem 10]{Korey}. 
Set $f: = \sqrt{ k}$ an $L_{loc}^2(d\sigma)$ function.
Set $m_{ k, \Delta_0}$ to be the {\bf median} of the function $ k$ on $\Delta_0$, that is
\[\sigma(\{x \in \Delta_0:  k > m_{ k, \Delta_0}\}),\sigma( \{x \in \Delta_0:  k < m_{ k, \Delta_0}\}) \le (1/2)\sigma(\Delta_0)\]
Set $E' = \{x \in \Delta_0:  k > m_{ k, \Delta_0}\}$ and $F' = \{x \in \Delta_0:  k < m_{ k, \Delta_0}\}$. Then by Lemma \ref{sigmadiffuse.lem} there exists $G \subseteq \Delta_0 \setminus (E' \cup F')$ such that 
$\sigma(E' \cup G) =  (1/2)\sigma(\Delta_0)$. Setting $E := E' \cup G$ and $F := \Delta_0 \setminus E$ we have that $\sigma(E) = \sigma(F) = (1/2)\sigma(\Delta_0)$,
\[ k(x) \le m_{ k, \Delta_0}, \quad \sigma-a.e. x \in F\]
and
\[ k(x) \ge m_{ k, \Delta_0}, \quad \sigma-a.e. x \in E.\]
By hypothesis 
\[\frac{ w(F)}{ w(\Delta_0)} = 1 - \frac{ w(E)}{ w(\Delta_0)} \ge \frac12 - \epsilon.\]
Then 
\begin{align*} \fint_{\Delta_0} f^2 \, d\sigma &= \frac{1}{\sigma(\Delta_0)} \int_{\Delta_0}  k \, d\sigma = \frac{  w(\Delta_0)}{\sigma(\Delta_0)}
\\ & \le (1 - 2\epsilon)^{-1} \frac{  w(F)}{\sigma(F)} = (1 - 2\epsilon)^{-1}\frac{1}{\sigma(F)} \int_F   k \, d\sigma
\\ & = (1 - 2\epsilon)^{-1}\frac{1}{\sigma(F)} \int_F f^2\, d\sigma \le (1 - 2\epsilon)^{-1}\frac{1}{\sigma(F)} \int_F f  \, d\sigma \sqrt{m_{ k, \Delta_0}},
\end{align*}
where we used $f \le \sqrt{m_{ k, \Delta_0}}$ $\sigma$-a.e. on $F$. On the other hand,
\begin{align*}
	m_{k,\Delta_0} \leq \fint_E k \, d\sigma = \frac{w(E)}{\sigma(E)} \leq \frac{\frac12 + \epsilon}{\frac12} \,\frac{w(\Delta_0)}{\sigma(\Delta_0)} = (1+2\epsilon) \fint_{\Delta_0} k\, d\sigma,
\end{align*}
where we used $\sigma(E)/\sigma(\Delta_0) = 1/2$ and the hypothesis of the lemma. Combining these two inequalities we have
\begin{align*}
\fint_{\Delta_0} f^2 \, d\sigma &\le (1 - 2\epsilon)^{-1} \left( \fint_F f \, d\sigma \right) \sqrt{m_{ k, \Delta_0}} 
\\ & \le (1 + 2\epsilon)^{1/2}(1 - 2\epsilon)^{-1} \left( \fint_{\Delta_0} f  \, d\sigma \right) \left(\fint_{\Delta_0}  k \, d\sigma \right)^{1/2}
\\ & = (1 + 2\epsilon)^{1/2}(1 - 2\epsilon)^{-1}\left(\fint_{\Delta_0} f   \, d\sigma \right) \left(\fint_{\Delta_0} f^2 \, d\sigma \right)^{1/2},
\end{align*}
where we used that the average over $F$ of $ k$ is less than the average over $\Delta_0$ of $ k$, by the definition of $F$ and $E = \Delta_0 \setminus F$. Thus, we have
\begin{equation}\label{optRH2forsqrt.eq}
\left(\fint_{\Delta_0} f^2 \, d\sigma\right)^{1/2} \le (1 + c\epsilon)  \fint_{\Delta_0} f   \, d\sigma,
\end{equation}
provided that $\epsilon_0$ is sufficiently small. 
\end{proof}

To show \ref{cond:d} implies \ref{cond:c}, we apply the previous lemma with $\epsilon = C'(\beta')^\mu$. We also remark here that in order to conclude that $\log k$ in $VMO$ from \ref{cond:c} one can use the methods in \cite{Korey} (armed with Lemma \ref{sigmadiffuse.lem}).

\section[Pullbacks and pushforwards for Lipschitz domains]{Pullbacks and pushforwards for Lipschitz domains}\label{pushpull.sect}
In this appendix, we recall the well-known applications of arguments concerning the ``flat" transformation of solutions to (second order divergence form) elliptic equations on Lipschitz domains to solutions of a transformed elliptic operator on the upper half space. This transformation is well adapted to the perturbations under consideration in this work as many have noted. We continue to work in $\ree$, identified as $\ree = \{(x,t) \in \rn \times \re\}$. For a Lipschitz function $\varphi : \rn \to \re$, with $\varphi(0) = 0$\footnote{We can always arrange for this by shifting all of the objects under consideration.} we set 
\[ \om_\varphi := \{(x,t) \in \ree: t > \varphi(x)\}. \] We will often just write $\om$ except when the dependence on $\varphi$ is important. We also define 
\[ \Gr(\varphi) := \{(x,t)\in \ree: t=\varphi(x) \}.\]

The following proposition follows by a simple change of variables.

\begin{proposition}\label{Poissonkernelofpullback.prop} Let $\varphi : \rn \to \re$, $\varphi(0) = 0$ be a Lipschitz function and $\Omega= \om_\varphi$ be as above. Let $L = - \div A \nabla$ be an elliptic operator with real coefficient matrix $A$. Let $\Phi(x,t)$ be the {\bf flattening map for $\varphi$}, $\Phi(x,t) := (x, t - \varphi(x))$, so that $\Phi(\om_\varphi) = \ree_+$ and $\Phi(\Gr(\varphi)) = \rn \times \{0\}$. Then $u : \om \to \re$ solves  the the differential equation
\[(D)_{L,\om} 
\begin{cases}
Lu = 0 \in \om \\
u = f \in C_c(\pom)
\end{cases}\]
if and only if $\tilde u: \ree_+ \to \re$ solves the differential equation
\[(D)_{\widetilde L, \ree_+}
\begin{cases}
\widetilde L \tilde u = 0 \in \ree_+ \\
\tilde u = \tilde f,
\end{cases}\]
where $\tilde f(x) = f(x,\varphi(x)) \in C_c(\rn)$, $\tilde u(X) = (u \circ \Phi^{-1})(X)$ and $\widetilde L = -\div \widetilde A \nabla$ with $\widetilde A = J_\Phi^{T}(A \circ \Phi^{-1})J_\Phi$. Here $J_\Phi$ is the Jacobian matrix of $\Phi$, given by 
\[
J_\Phi(X) = J_\Phi(x,t) =  \left[ \begin{array}{c|c}
I_{n \times n} & -\nabla \varphi(x)
\\ \hline
0 & 1
\end{array}\right],
\]
so that, in fact, $J_\Phi$ is a function of $x$. In particular, if $A$ is $t$-independent so is $\tilde A$.

Moreover, if $h_A^X = \frac{d\hm^X}{d\sigma}$ is the Poisson kernel for $L$ in $\om$ with pole at $X$ exists then
\[h_A^X(y,\varphi(y)) = \frac{1}{\sqrt{1 + |\nabla \varphi(y)|^2}} \,k_{\widetilde A}^{\Phi(X)}(y),\]
where $k_{\tilde A}^{\Phi(X)}(y)$ is the Poisson kernel for $\widetilde L$ in $\ree_+$ with pole at $\Phi(X)$.
\end{proposition}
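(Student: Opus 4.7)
The plan is to proceed in two steps. First, I would establish the equivalence of the two Dirichlet problems by performing the change of variables $Y = \Phi^{-1}(X)$ directly in the weak formulation. A direct application of the chain rule to $u(Y) = \tilde u(\Phi(Y))$ yields $\nabla_Y u(Y) = J_\Phi(x)\, \nabla_X \tilde u(X)$ (with the paper's convention for $J_\Phi$), and similarly $\nabla_Y v = J_\Phi\, \nabla_X \tilde v$ for any test function $v = \tilde v \circ \Phi$. Substituting into $\iint_\Omega \langle A\nabla u, \nabla v\rangle\, dY = 0$ and transporting to $\reu$ via $\Phi$---noting $|\det J_\Phi| \equiv 1$ since $J_\Phi$ is upper triangular with unit diagonal---I arrive at
\[ \iint_{\reu} \left\langle \bigl[J_\Phi^T\, (A\circ\Phi^{-1})\, J_\Phi\bigr]\, \nabla\tilde u,\ \nabla\tilde v \right\rangle dX = 0, \]
which identifies $\widetilde A = J_\Phi^T (A\circ\Phi^{-1}) J_\Phi$. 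Ellipticity of $\widetilde A$ (with constants depending on $\|\nabla\varphi\|_\infty$) follows from ellipticity of $A$ and the invertibility of $J_\Phi$, and $t$-independence of $\widetilde A$ when $A$ is $t$-independent is immediate because $J_\Phi$ depends only on $x$. The boundary data transforms as $\tilde f(x) = f(x, \varphi(x)) \in C_c(\rn)$, and continuous attainment of boundary values passes through because $\Phi$ restricts to a bi-Lipschitz homeomorphism of $\Gr(\varphi)$ onto $\rn \times\{0\}$.

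The second step deduces the Poisson kernel identity from this equivalence. For $f \in C_c(\pom)$, on the one hand, parametrizing $\Gr(\varphi)$ by $y \mapsto (y,\varphi(y))$ with $d\sigma(y,\varphi(y)) = \sqrt{1+|\nabla\varphi(y)|^2}\, dy$ gives
\[ u_f(X) = \int_{\Gr(\varphi)} f(z)\, h_A^X(z)\, d\sigma(z) = \int_{\rn} f(y,\varphi(y))\, h_A^X(y,\varphi(y))\, \sqrt{1+|\nabla\varphi(y)|^2}\, dy. \]
On the other hand, Step~1 provides $u_f(X) = \tilde u_{\tilde f}(\Phi(X))$, which, since $\Phi(X) \in \reu$, can be represented as
\[ \tilde u_{\tilde f}(\Phi(X)) = \int_{\rn} \tilde f(y)\, k_{\widetilde A}^{\Phi(X)}(y)\, dy = \int_{\rn} f(y,\varphi(y))\, k_{\widetilde A}^{\Phi(X)}(y)\, dy. \]
Equating these representations and using that $\tilde f$ ranges over all of $C_c(\rn)$ as $f$ ranges over $C_c(\pom)$, a density argument yields the pointwise identity
\[ h_A^X(y,\varphi(y))\, \sqrt{1+|\nabla\varphi(y)|^2} = k_{\widetilde A}^{\Phi(X)}(y), \]
which is exactly the claimed formula after dividing.

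The hard part is largely bookkeeping: one must carefully track the transposes in the Jacobian and the orientation of pullback versus pushforward, since an incorrect convention will produce $J_\Phi A J_\Phi^T$ instead of $J_\Phi^T A J_\Phi$. Once the chain rule is unwound the remainder is linear algebra. No deep input is required beyond standard weak-solution theory, the bi-Lipschitz nature of $\Phi$, and well-posedness of the Dirichlet problem for continuous, compactly supported data in the (Wiener-regular) Lipschitz graph domain $\om_\varphi$; this last point ensures the weak solution coincides with the elliptic-measure solution, so that the Poisson kernel comparison is a genuine $\sigma$-a.e.\ pointwise identity.
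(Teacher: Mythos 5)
Your proof is correct and follows essentially the same route as the paper: a direct change of variables in the weak formulation (using $\nabla_Y u = J_\Phi \nabla_X \tilde u$ with $|\det J_\Phi| = 1$) to identify $\widetilde A = J_\Phi^T (A\circ\Phi^{-1})J_\Phi$, and then equating the two integral representations of the solution to read off the Poisson kernel identity. The paper merely states that the first assertion "follows by a simple change of variables" and writes out only the integral comparison for the second part; you have spelled out the chain-rule computation that the paper leaves implicit, but the underlying argument coincides.
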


To see the last fact, note that for $X \in \om$ and $f \in C_c(\pom)$
\[ u(X) = \int_{\pO} h_A^X(Z) f(Z) d\sigma(Z) = \int_{\RR^n} h_A^X(y,\varphi(y)) f(y, \varphi(y)) \sqrt{1+ |\nabla \varphi(y)|^2} dy; \]
and on the other hand,
\[ u(X) = \tilde u(\Phi(X)) = \int_{\rn} k_{\widetilde A}^{\Phi(X)}(y) \tilde f(y) \, dy = \int_{\RR^n} k_{\widetilde A}^{\Phi(X)} f(y, \varphi(y)) \, dy. \]

From the above we can see that when $\|\nabla \varphi\|_{\infty} \ll 1$ the Poisson kernels $h_A$ and $k_{\widetilde A}$ are very similar. We would like to say that perturbed operators (in the analogous sense of that in Proposition \ref{perturbconstop.prop}) remain so under pullback. This amounts to look at how $J_{\Phi}$ acts on vectors and matrices. The following lemma can be directly verified via computation. We provide some brief details.

\begin{lemma}\label{JACprop.lem}
 Let $\varphi : \rn \to \re$, $\varphi(0) = 0$ be a Lipschitz function with Lipschitz constant $\gamma:= \|\nabla \varphi\|_{L^\infty}$. 
 For almost every $x \in \rn$, $J_\Phi = J_\Phi(x)$ has the following properties.
\begin{itemize}
\item[(a)] For any $\xi \in \ree$,
\[ \begin{array}{ll}
 |J_\Phi \xi - \xi|_2 \le \sqrt{n} \gamma|\xi|_2, & |J_\Phi \xi - \xi|_\infty \le \gamma|\xi|_\infty,
 \\ |J_\Phi^T \xi - \xi|_2 \le \gamma|\xi|_2, & |J_\Phi^T \xi - \xi|_\infty \le \sqrt{n} \gamma|\xi|_\infty.
\end{array} \]
\item[(b)] For any $(n+1)\times (n+1)$ matrix $A$,
\[|J^T_\Phi A J_\Phi - A|_\infty \le (\sqrt{n} \gamma + n\gamma^2) |A|_\infty.\]
\item[(c)] For any $\xi \in \ree$,
\[|J_\Phi \xi|_2 \ge \min\left\{ \frac12, (1+4n\gamma^2)^{-\frac12} \right\}|\xi|_2.\]
\end{itemize} 
\end{lemma}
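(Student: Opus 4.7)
The plan is to prove Lemma \ref{JACprop.lem} by direct block-matrix computation, exploiting the very sparse structure of $J_\Phi$: apart from the identity block and the lower-right entry, its only nonzero entries sit in the last column and equal $-\nabla\varphi(x)$. Writing $a := \nabla\varphi(x)$ (so $|a|_2 \le \gamma$ and hence $|a|_\infty \le \gamma$ a.e.) and decomposing $\xi \in \ree$ as $\xi = (\xi', \xi_{n+1}) \in \rn \times \re$, one reads off
\[
J_\Phi \xi - \xi = (-a\,\xi_{n+1},\, 0), \qquad J_\Phi^T \xi - \xi = (0,\, -a\cdot \xi').
\]
All four estimates in (a) then reduce to bounding $|a\,\xi_{n+1}|$ and $|a\cdot \xi'|$ in the appropriate norm via Cauchy--Schwarz and the equivalence $|v|_1 \le \sqrt{n}\,|v|_2$ on $\rn$, which is what produces the $\sqrt{n}$ factors in two of the four inequalities.

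For (c), I would square both sides to get $|J_\Phi\xi|_2^2 = |\xi' - a\,\xi_{n+1}|_2^2 + |\xi_{n+1}|^2$ and then split into cases depending on the size of $|\xi'|_2$ relative to $|\xi_{n+1}|$. In the regime $|\xi'|_2 \ge 2\sqrt{n}\,\gamma\,|\xi_{n+1}|$ the reverse triangle inequality gives $|\xi' - a\,\xi_{n+1}|_2 \ge |\xi'|_2/2$, hence $|J_\Phi \xi|_2 \ge |\xi|_2/2$; in the complementary regime $|\xi|_2^2 < (1 + 4n\gamma^2)|\xi_{n+1}|^2$, so simply discarding the first summand yields $|J_\Phi\xi|_2^2 \ge |\xi_{n+1}|^2 \ge |\xi|_2^2/(1+4n\gamma^2)$.

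For (b), I would write $A$ in blocks matching $J_\Phi$, with an $n \times n$ upper-left block $A_{11}$, and multiply out to obtain
\[
J_\Phi^T A\, J_\Phi - A \;=\; \begin{bmatrix} 0 & -A_{11}\,a \\ -a^T A_{11} & a^T A_{11}\, a - a^T A_{12} - A_{21}\,a \end{bmatrix}.
\]
The entries of the off-diagonal blocks are bounded by $|A|_\infty\,|a|_1 \le \sqrt{n}\,\gamma\,|A|_\infty$, while the scalar bottom-right picks up the additional quadratic term controlled by $|A|_\infty\,|a|_1^2 \le n\gamma^2|A|_\infty$; summing gives the stated bound. There is no genuine obstacle here --- the lemma is purely computational --- the only step requiring a little care is consistently tracking which norm is in play on vectors versus on entries of matrices, and invoking $|a|_1 \le \sqrt{n}\,|a|_2 \le \sqrt{n}\,\gamma$ at the right moments.
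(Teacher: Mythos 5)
Your proof is correct in spirit and essentially follows the same elementary, block-computational route as the paper; for (a) and (c) the arguments coincide almost verbatim, and for (b) you carry out the explicit block product whereas the paper factors $J_\Phi^T A J_\Phi - A = J_\Phi^T(AJ_\Phi - A) + (J_\Phi^T A - A)$ and uses the triangle inequality with part (a). Both routes are valid and equivalent in substance.

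One bookkeeping caution in (b): your own displayed block form has the bottom-right scalar $a^T A_{11} a - a^T A_{12} - A_{21} a$, which contains \emph{two} linear corrections in addition to the quadratic one, each bounded by $|a|_1\, |A|_\infty \le \sqrt{n}\gamma |A|_\infty$. A faithful tally of your computation therefore yields $|J_\Phi^T A J_\Phi - A|_\infty \le (2\sqrt n\,\gamma + n\gamma^2)|A|_\infty$, not the $\sqrt n\,\gamma + n\gamma^2$ the lemma states; the phrase ``summing gives the stated bound'' glosses over the extra linear term. In fairness, the paper's own proof of (b) contains the same slip (the step bounding $|J_\Phi^T A J_\Phi - J_\Phi^T A|_\infty$ by $\sqrt n\,\gamma\,|A J_\Phi - A|_\infty$ is not literally correct), and a direct $2\times 2$ example with $A$ all ones already shows the factor of $2$ is genuinely needed. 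This is inconsequential downstream since the lemma is only invoked for $\gamma \lesssim 1/n$ and the constant is absorbed, but if you want your write-up internally consistent you should either report $2\sqrt n\,\gamma + n\gamma^2$ or absorb the extra term explicitly.
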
 
Here $|\cdot|_2$ and $|\cdot|_\infty$ are the $\ell^2$ and $\ell^\infty$ norms, respectively.
\begin{proof} For $\xi \in \ree$, $\xi = (\xi_1, \dots, \xi_n, \xi_{n+1})$ we write $\xi_\parallel = (\xi_1, \dots, \xi_n)$ and $\xi_\perp = \xi_{n+1}$. To prove the assertions of $(a)$ concerning $J_\Phi$, we write
$J_\Phi \xi = (\xi_\parallel - \xi_\perp\nabla \varphi(x), \xi_\perp)$, so that $J_\Phi \xi - \xi = (-\xi_\perp\nabla \varphi(x), 0)$. Similarly, to treat the estimates involving $J_\Phi^T$ we write $J_\Phi^T \xi = (\xi_\parallel, - \nabla \varphi(x) \cdot \xi_\parallel + \xi_\perp)$, so that $J_\Phi^T \xi - \xi = (0, -\nabla \varphi(x)\cdot \xi_\parallel)$. Property $(b)$ follows from property $(a)$ after writing 
\[J^T_\Phi A J_\Phi - A = (J^T_\Phi A J_\Phi   - J^T_\Phi A) + (J^T_\Phi A  - A)\]
so that
\begin{align*}
|J^T_\Phi A J_\Phi - A|_\infty &\le  |J^T_\Phi A J_\Phi   - J^T_\Phi A|_\infty + |J^T_\Phi A  - A|_\infty
\\& \le \sqrt{n}\gamma|AJ_\Phi - A|_\infty + \sqrt{n} \gamma|A|_\infty
\\& \le  (\sqrt{n} \gamma + n\gamma^2 ) |A|_\infty.
\end{align*}
Finally, to see $(c)$, we again write $J_\Phi \xi = (\xi_\parallel - \xi_\perp\nabla \varphi(x), \xi_\perp)$ so that
$|J_\Phi \xi|_2 \ge \max(|\xi_\parallel - \xi_\perp\nabla \varphi|_2, |\xi_\perp|_2)$. If $|\xi_\perp\nabla \varphi|_2 < \tfrac{1}{2}|\xi_\parallel|_2$ then the estimate in $(c)$ follows readily. If not, then $|\xi_\perp\nabla \varphi|_2 \ge \tfrac{1}{2}|\xi_\parallel|_2$ and hence by the Lipschitz condition $|\xi_\perp|_2 \ge \frac{1}{2\sqrt{n} \gamma}|\xi_\parallel|_2$ so that
\[ |J_\Phi \xi|_2 \ge |\xi_{\perp}|_2 \geq \frac{1}{\sqrt{1+4n\gamma^2}} |\xi|_2, \] from which the estimate in $(c)$ again follows readily.
\end{proof}

Next, we define Whitney and Carleson-type regions, which are well-suited for our purposes. For $\varphi$ and $\Omega_\varphi$ as above, and $X = (x,\varphi(x) + t) \in \om_\varphi$ we define the {\bf $\varphi$-adapted Whitney region} 
\[W_\varphi(X): = \left\{(y,s): |y-x|<t,\, \varphi(y) + t/2 < s < \varphi(y) + 3t/2 \right\}.\]
Note that $\Phi(W_\varphi) = W(x,t)$, the Whitney region in $\RR^{n+1}_+$. Next, for a  cube $Q \subset \rn$ we define the {\bf $\varphi$-adapted Carleson box}
\begin{equation}\label{def:Cbox}
	R_{Q, \varphi} = \{(y,s): y \in Q, \, \varphi(y)< s < \varphi(y) + \ell(Q)\}.
\end{equation} 
Finally, for a measurable $(n+1) \times (n+1)$ matrix-valued function $B$, we define the (FKP) {\bf $\varphi$-adapted Carleson norm} of $B$ as
\begin{equation}\label{FKPphiadapt.def}
\begin{split}
\|B\|_{\C_\varphi} &:= \sup_{Q \subset \rn} \left( \frac{1}{|Q|} \iint_{R_{Q,\varphi}} \| B\|_{L^\infty(W_\varphi(y,s))}^2 \, \frac{dy\,ds}{s - \varphi(y)} \right)^{1/2} 
\\& =  \sup_{Q \subset \rn} \left( \frac{1}{|Q|} \iint_{R_{Q}} \| B'\|_{L^\infty(W(x,t))}^2 \, \frac{dx\,dt}{t} \right)^{1/2} =\|B'\|_{\C},
\end{split}
\end{equation}
where $B' = B\circ \Phi^{-1}$ and we used the flattening change of variables in the second line.

The following lemma is a direct consequence of the definitions above and Lemma \ref{JACprop.lem}.

\begin{lemma}\label{carlpull1.lem} Let $\varphi: \rn \to \re$ be a Lipschitz function with Lipschitz constant $\gamma$ and $\varphi(0) = 0$ and suppose that $B$ is a $(n +1) \times (n+1)$ matrix-valued function on $\Omega_\varphi$ with $\|B\|_{\C_\varphi} < \infty$. Then the matrix $\widetilde B = J_\Phi^T (B \circ \Phi^{-1})J_\Phi$ satisfies
\[\|\widetilde{B}\|_{\C} \le (1 + \sqrt{n}\gamma + n\gamma^2)\|B\|_{\C_\varphi}.\]
\end{lemma}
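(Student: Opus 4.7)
The plan is to reduce everything to the ordinary Carleson norm on $\ree_+$ via the flattening change of variables, and then apply the pointwise matrix bound from Lemma \ref{JACprop.lem}(b). Concretely, I would set $B' := B \circ \Phi^{-1}$, so that the second line of \eqref{FKPphiadapt.def} immediately identifies $\|B\|_{\C_\varphi} = \|B'\|_{\C}$. Since $\widetilde B = J_\Phi^T B' J_\Phi$ already lives on $\ree_+$ and its Carleson norm $\|\widetilde B\|_{\C}$ is the ``flat'' one (taken over the usual Whitney regions $W(x,t)$ and Carleson boxes $R_Q$), this reduces the problem to comparing $\widetilde B$ with $B'$ pointwise on $\ree_+$.

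Next, I would apply Lemma \ref{JACprop.lem}(b) directly. For a.e.\ $(y,s) \in \ree_+$, since $J_\Phi(y)$ depends only on the horizontal coordinate, the triangle inequality and the lemma give
\[
|\widetilde B(y,s)|_\infty \le |B'(y,s)|_\infty + |J_\Phi^T(y) B'(y,s) J_\Phi(y) - B'(y,s)|_\infty \le (1 + \sqrt{n}\gamma + n\gamma^2)\,|B'(y,s)|_\infty.
\]
Because this bound is pointwise a.e., taking the essential supremum over any Whitney region $W(x,t)$ preserves it, yielding
\[
\|\widetilde B\|_{L^\infty(W(x,t))} \le (1 + \sqrt{n}\gamma + n\gamma^2)\,\|B'\|_{L^\infty(W(x,t))}.
\]

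Finally, squaring this inequality, integrating it against $dy\,dt/t$ over a Carleson box $R_Q$, dividing by $|Q|$, and taking the supremum over all cubes $Q \subset \rn$ gives $\|\widetilde B\|_{\C} \le (1 + \sqrt{n}\gamma + n\gamma^2)\|B'\|_{\C}$, which combined with $\|B'\|_{\C} = \|B\|_{\C_\varphi}$ is exactly the statement of the lemma. I anticipate no real obstacle: the only point requiring care is that Lemma \ref{JACprop.lem}(b) must be used as a \emph{pointwise} (a.e.) estimate, which is legitimate precisely because $J_\Phi(y)$ depends only on $y$ and is uniformly bounded in $\gamma$; this is what lets the estimate commute past the $\esssup$ defining the Carleson norm.
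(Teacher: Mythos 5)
Your proof is correct and essentially identical to the paper's: both rely on the identity $\|B\|_{\C_\varphi} = \|B\circ\Phi^{-1}\|_{\C}$ from \eqref{FKPphiadapt.def} together with the pointwise estimate $|J_\Phi^T M J_\Phi|_\infty \le (1+\sqrt{n}\gamma + n\gamma^2)|M|_\infty$ obtained from Lemma \ref{JACprop.lem}(b) and the triangle inequality. The only cosmetic difference is that you apply the pointwise matrix bound after flattening (on $\ree_+$) while the paper applies it before flattening (inside the $\varphi$-adapted Carleson integral), which is the same step in different coordinates.
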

\begin{proof}
Recall that $J_\Phi (x,t) = J_\Phi(x)$ so that \eqref{FKPphiadapt.def} with $B' = \widetilde{B}$ (which means $ J_\Phi^T B J_\Phi$ is in place of $B$)
\[\|\widetilde{B}\|_{\C} \le \| J_\Phi^T B J_\Phi\|_{\C_\varphi} \le (1 + \sqrt{n}\gamma + n\gamma^2)\|B\|_{\C_\varphi},\]
where we used Lemma \ref{JACprop.lem}(b) in the second inequality.
\end{proof}

Combining the previous Lemma with Lemma \ref{JACprop.lem}, one easily obtains the following.

\begin{proposition}\label{genopperturbPB.prop} Let $\Lambda \ge 1$. Let $\varphi : \rn \to \re$ be a Lipschitz function with Lipschitz constant $\gamma \le \frac{1}{50n}$ and $\varphi(0)=0$, and $A_0$ a real, constant $\Lambda$-elliptic $(n+1) \times (n+1)$ matrix. Suppose $A(X)$ is a real, $\Lambda$-elliptic, matrix-valued function on $\ree$ with the decomposition
\[A(x,t) = A_1(x) + B(x,t)\]
satisfying
\[\| A_1 - A_0 \|_{L^\infty(\rn)} + \|B\|_{\C_\varphi} < \kappa\]
for some $\kappa \ge 0$. Then $\widetilde{A} = J_\Phi^T(A\circ \Phi^{-1})J_\Phi$ is a real, $8\Lambda$-elliptic matrix with the decomposition
\[\widetilde{A}(x,t) = \widetilde{A}_1(x) + \widetilde{B}(x,t),\]
satsifying
\[\| \widetilde{A}_1 - A'_0 \|_{L^\infty(\rn)} + \|\widetilde{B}\|_{\C} < 2\kappa + 4\sqrt{n}\gamma\Lambda,\]
where $\widetilde{A}_1 := J^T_\Phi(A_1 \circ \Phi^{-1})J_\Phi = J^T_\Phi(A_1)J_\Phi$, $\widetilde{B} := J^T_\Phi(B \circ \Phi^{-1})J_\Phi$ and\footnote{Note that ${A}'_0$ is a constant matrix and one can show that $\widetilde{A}_0$ is $8\Lambda$ elliptic in the same manner as $\widetilde{A}$.}
$A'_0 := J^T_\Phi(0)A_0J_\Phi(0)$.
\end{proposition}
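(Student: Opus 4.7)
The proposition is essentially a routine bookkeeping exercise that combines Lemma \ref{JACprop.lem} with Lemma \ref{carlpull1.lem}. My plan is as follows.

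First I would verify the $8\Lambda$-ellipticity of $\widetilde A$. Since $J_\Phi(x,t)=J_\Phi(x)$, this is a pointwise check. Using Lemma \ref{JACprop.lem}(a) one obtains $\|J_\Phi\|_{op}\le 1 + \sqrt{n}\gamma \le 2$ under the hypothesis $\gamma\le 1/(50n)$, so that $\|\widetilde A(x,t)\|_{op}\le 4\Lambda \le 8\Lambda$. For the lower bound, since
\[\langle \widetilde A(x,t)\xi,\xi\rangle = \langle A(\Phi^{-1}(x,t)) J_\Phi(x)\xi, J_\Phi(x)\xi\rangle \ge \Lambda^{-1}|J_\Phi(x)\xi|_2^2,\]
Lemma \ref{JACprop.lem}(c) combined with $\gamma\le 1/(50n)$ gives $|J_\Phi\xi|_2 \ge (1/2)|\xi|_2$, and hence $\langle \widetilde A\xi,\xi\rangle \ge (4\Lambda)^{-1}|\xi|_2^2 \ge (8\Lambda)^{-1}|\xi|_2^2$.

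Next, the decomposition $\widetilde A = \widetilde A_1 + \widetilde B$ follows immediately from applying $J_\Phi^T\,\cdot\,J_\Phi$ to the decomposition of $A$, and $\widetilde A_1 = J_\Phi^T(x) A_1(x) J_\Phi(x)$ is manifestly $t$-independent. The Carleson estimate on $\widetilde B$ is a direct application of Lemma \ref{carlpull1.lem} to $B$:
\[\|\widetilde B\|_{\C} \le (1 + \sqrt{n}\gamma + n\gamma^2)\|B\|_{\C_\varphi} \le 2\|B\|_{\C_\varphi}.\]

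For the pointwise bound on $\widetilde A_1 - A_0'$, I would split
\[\widetilde A_1(x) - A_0' = \bigl[J_\Phi^T(x) A_1(x) J_\Phi(x) - A_1(x)\bigr] + \bigl[A_1(x) - A_0\bigr] + \bigl[A_0 - J_\Phi^T(0) A_0 J_\Phi(0)\bigr],\]
and apply Lemma \ref{JACprop.lem}(b) to the first and third bracketed terms (both $A_1(x)$ and $A_0$ have $L^\infty$ norm at most $\Lambda$) to obtain
\[\|\widetilde A_1 - A_0'\|_{L^\infty} \le 2(\sqrt{n}\gamma + n\gamma^2)\Lambda + \|A_1 - A_0\|_{L^\infty}.\]
Summing the two estimates and using $n\gamma^2 \le \sqrt{n}\gamma$, which follows from $\gamma \le 1/(50n) \le 1/\sqrt{n}$ for $n\ge 2$, yields
\[\|\widetilde A_1 - A_0'\|_{L^\infty} + \|\widetilde B\|_{\C} \le 2\|A_1 - A_0\|_{L^\infty} + 2\|B\|_{\C_\varphi} + 4\sqrt{n}\gamma\,\Lambda < 2\kappa + 4\sqrt{n}\gamma\,\Lambda,\]
the desired bound. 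There is no substantive obstacle here; the only subtlety is the careful accounting of numerical constants, which is precisely why the quantitative hypothesis $\gamma\le 1/(50n)$ is imposed.
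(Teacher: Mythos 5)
Your overall structure (ellipticity, decomposition, Carleson estimate, $L^\infty$ estimate) mirrors the paper, and the ellipticity and Carleson steps are fine. However, your $L^\infty$ telescoping has a genuine gap. You split
\[
\widetilde A_1 - A_0' = \bigl[J_\Phi^T A_1 J_\Phi - A_1\bigr] + \bigl[A_1 - A_0\bigr] + \bigl[A_0 - J_\Phi^T(0) A_0 J_\Phi(0)\bigr]
\]
and assert that $A_1(x)$ has $L^\infty$ norm at most $\Lambda$. That is not given: only $A = A_1 + B$ and $A_0$ are assumed $\Lambda$-elliptic, while for $A_1$ the hypotheses yield only $\|A_1\|_{L^\infty} \le \|A_0\|_{L^\infty} + \|A_1 - A_0\|_{L^\infty} < \Lambda + \kappa$. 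Since $\kappa$ is an arbitrary nonnegative number in the statement, the bound you invoke can fail, and your estimate for the first bracket is not justified as written.

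The gap is small and the split can be salvaged: using $\|A_1\|_{L^\infty} \le \Lambda + \|A_1 - A_0\|_{L^\infty}$ in Lemma \ref{JACprop.lem}(b) gives
\[
\|\widetilde A_1 - A_0'\|_{L^\infty} \le 2(\sqrt n\gamma + n\gamma^2)\Lambda + \bigl(1 + \sqrt n\gamma + n\gamma^2\bigr)\|A_1 - A_0\|_{L^\infty} \le 4\sqrt n\gamma\Lambda + 2\|A_1 - A_0\|_{L^\infty},
\]
after which the conclusion follows as before. The paper avoids the issue entirely by telescoping differently: it writes $\widetilde A_1 - A_0' = (\widetilde A_1 - \widetilde A_0) + (\widetilde A_0 - A_0) + (A_0 - A_0')$ with $\widetilde A_0 := J_\Phi^T A_0 J_\Phi$, so that the first term is $J_\Phi^T(A_1 - A_0)J_\Phi$, whose norm is controlled directly by $\|A_1 - A_0\|_{L^\infty}$ via Lemma \ref{JACprop.lem}(b) and the triangle inequality, with no bound on $\|A_1\|_{L^\infty}$ required. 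This is the cleaner choice and is why the paper's constants fall out without the extra accounting; your route reaches the same conclusion only once the unjustified step is repaired.
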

\begin{proof}
The form of the decomposition $\widetilde{A}(x,t) = \widetilde{A}_1(x) + \widetilde{B}(x,t)$ is immediate from the form of $A$. In particular, notice that $A_1 \circ \Phi^{-1} = A_1$ since $A_1 = A_1(x)$. The $8\Lambda$-ellipticity of $\widetilde{A}$ is a consequence of Lemma \ref{JACprop.lem}(b) and (c). Indeed, the boundedness of $\tilde{A}$ follows from Lemma \ref{JACprop.lem}(b), here one recalls that $\gamma < 1/(50n)$, so that $\sqrt{n}\gamma + n\gamma^2 \leq 1$ (we will use this several times). To see the lower ellipticity bound, we use the $\Lambda$-ellipticity of $A$ and Lemma \ref{JACprop.lem}(c) to obtain
\[\langle \widetilde{A}\xi , \xi \rangle \ge \Lambda^{-1}|J_\Phi \xi|_2^2 \ge \frac{1}{4} \Lambda^{-1}|\xi|_2^2,\]
for almost every $x$ and all $\xi \in \ree$.

To obtain the desired estimate for $\| \widetilde{A}_1 - A'_0 \|_{L^\infty(\rn)}$, we write
\begin{equation}\label{A1estdag.eq}
\widetilde{A}_1 - A'_0 = (\widetilde{A}_1 - \widetilde{A}_0) + (\widetilde{A}_0 - A_0'),
\end{equation}
where $\widetilde{A}_0$ is the (variable) matrix-valued function $J^T_\Phi(x) A_0 J_\Phi(x)$. 
Using Lemma \ref{JACprop.lem}(b), and the triangle inequality
\begin{equation}\label{A1estdagp.eq}
\| \widetilde{A}_1 - \widetilde{A}_0\|_{L^\infty} \le (1 + \sqrt{n}\gamma + n\gamma^2) \| A_1 - A_0\|_{L^\infty} \leq 2\| A_1 - A_0\|_{L^\infty}
\end{equation}
To handle the second term,
we again use Lemma \ref{JACprop.lem}(b) to obtain
\begin{equation}\label{A1estdagpp.eq}
\begin{split}
\| \widetilde{A}_0 - A_0'\|_{L^\infty} &\le \|\widetilde{A}_0 - A_0\|_{L^\infty} + \| A_0 - A'_0\|_{L^\infty}
\\ &\le 2(\sqrt{n}\gamma + n\gamma^2)\|A_0\|_{L^\infty} \le 4\sqrt{n}\gamma\Lambda.
\end{split}
\end{equation}
Combining \eqref{A1estdag.eq}, \eqref{A1estdagp.eq} and \eqref{A1estdagpp.eq} yields the desirable estimate 
\[ \| \widetilde{A}_1 - A'_0 \|_{L^\infty} \le 2 \| A_1 - A_0\|_{L^\infty} + 4\sqrt{n}\gamma\Lambda.\]
Since Lemma \ref{carlpull1.lem} gives 
\[\|\widetilde{B}\|_{\C} \le (1 + \sqrt{n}\gamma + n\gamma^2)\|B\|_{\C_\varphi}  \le 2 \|B\|_{\C_\varphi} \]
we obtain 
\[\| \widetilde{A}_1 - A'_0 \|_{L^\infty(\rn)} + \|\widetilde{B}\|_{\C} < 2 \kappa + 4\sqrt{n}\gamma\Lambda,\]
as desired.
\end{proof}

\bibliographystyle{alpha}
\bibdata{references}
\bibliography{references}

\end{document}